\let\mathcal\mathscr
\numberwithin{equation}{section}
\renewcommand{\d}{\mathrm{d}}
\renewcommand{\phi}{\varphi}
\renewcommand{\rho}{\varrho}
\newcommand{\0}{\mathbf{0}}
\newcommand{\PP}{\mathbb{P}}
\renewcommand{\AA}{\mathbb{A}}
\newcommand{\FF}{\mathbb{F}}
\newcommand{\ZZ}{\mathbb{Z}}
\newcommand{\NN}{\mathbb{N}}
\newcommand{\QQ}{\mathbb{Q}}
\newcommand{\RR}{\mathbb{R}}
\newcommand{\CC}{\mathbb{C}}
\newcommand{\TT}{\mathbb{T}}
\DeclareMathOperator{\tr}{Tr}
\def \bfg { {\bf g}}
\renewcommand{\leq}{\leqslant}
\renewcommand{\geq}{\geqslant}
\newcommand{\h}{\mathbf{h}}
\newcommand{\x}{\mathbf{x}}
\newcommand{\y}{\mathbf{y}}
\renewcommand{\v}{\mathbf{v}}
\newcommand{\uu}{\mathbf{u}}
\newcommand{\z}{\mathbf{z}}
\newcommand{\w}{\mathbf{w}}
\renewcommand{\b}{\mathbf{b}}
\renewcommand{\a}{\mathbf{a}}
\renewcommand{\k}{\mathbf{k}}
\newcommand{\ve}{\varepsilon}
\newcommand{\bal}{\boldsymbol{\alpha}}
\newcommand{\bbe}{\boldsymbol{\beta}}
\newcommand{\bga}{\boldsymbol{\gamma}}
\DeclareMathOperator{\rank}{rank}
\DeclareMathOperator{\meas}{meas}
\DeclareMathOperator{\chara}{char}
\def \Mor {\operatorname{Mor}}
\newcommand{\vp}{\varpi}
\renewcommand{\hat}{\widehat}
\newcommand{\Ki}{K_\infty}
\newcommand{\1}{\mathbf{1}}
\newcommand{\g}{\mathbf{g}}
\newtheorem{theorem}{Theorem}[section]
\newtheorem{lemma}[theorem]{Lemma}
\newtheorem{prop}[theorem]{Proposition}
\newtheorem{corollary}[theorem]{Corollary}
\theoremstyle{remark}
\newtheorem{remark}[theorem]{Remark}
\newtheorem{hyp}[theorem]{Hypothesis}
\theoremstyle{definition}
\newtheorem{defi}[theorem]{Definition}
\newtheorem*{ack}{Acknowledgements}
\newcommand{\f}{\mathbf{f}}
\DeclareMathOperator{\ord}{ord}
\title[Rational curves on complete intersections]{Rational curves on complete intersections and the circle method}
\author{Tim Browning}
\address{IST Austria\\
Am Campus 1\\
3400 Klosterneuburg\\
Austria}
\email{tdb@ist.ac.at}
\author{Pankaj Vishe}
\address{Department of Mathematical Sciences\\ Durham University\\ Durham\\ DH1 3LE\\ United Kingdom}
\email{pankaj.vishe@durham.ac.uk}
\author{Shuntaro Yamagishi}
\address{IST Austria\\
Am Campus 1\\
3400 Klosterneuburg\\
Austria}
\email{shuntaro.yamagishi@ist.ac.at}
\subjclass[2010]{14H10 (11D45, 11P55, 14G05, 14J70)}
\date{\today}
\begin{document}

\begin{abstract}
We study the geometry of the
space of rational curves on  smooth complete intersections  of low degree, which
pass through a given set of points on the variety.  The argument uses spreading out to a  finite field, together with an adaptation
to function fields of positive characteristic
of   work by Rydin Myerson
 on the circle method.  Our work also allows us to handle weak approximation for such varieties.
\end{abstract}

\maketitle
\thispagestyle{empty}

\setcounter{tocdepth}{1}
\tableofcontents

\section{Introduction}

\subsection*{Geometry of rational curves}
Let $d\geq 2$ be an integer and let
 $k$ be a field
whose characteristic exceeds $d$ if it is positive.
Let $X\subset \PP^{n-1}$ be a smooth complete intersection over $k$,
cut out by $R$ hypersurfaces of the same degree $d$.  We will always assume that
$n\geq Rd$, so that  $X$ is Fano.
In this paper we shall be interested in the geometry of the moduli space of degree $e$ rational curves on $X$ that pass through a given set of points on $X$.

To begin with, let $\mathcal M_{0,0}(X,e)$ be the moduli space
of degree $e$ rational curves on $X$.
As explained by Harris, Roth and Starr \cite[Lemma~4.2]{HRS},
basic deformation theory shows that every irreducible component of
$\mathcal M_{0,0}(X,e)$ has dimension at least
$$
\mu(e,R)= e(n-Rd)+n-R-4,
$$
which we refer to as the expected dimension.
In the special case $R=1$ of smooth degree $d$ hypersurfaces $X\subset \PP^{n-1}$, the space
$\mathcal M_{0,0}(X,e)$ has been the focus of a great deal of study.
For  $d=2$,
Kim and Pandharipande  \cite[Cor.~1]{kim} have addressed the irreducibility and  dimension of
 $\mathcal{M}_{0,0}(X,e)$. For $d=3$, similar results hold thanks to work of Coskun and Starr \cite{CS} if
 $n\geq 5$.  Using
a version of Bend-and-Break, the best result for generic degree $d$ hypersurfaces $X$ is due to
 Riedl and Yang \cite{RY}, who  establish that
 $\mathcal M_{0,0}(X,e)$ is irreducible and of the  expected dimension $\mu(e,1)$, provided that
 $n\geq d+3$.
Recent work of  Browning and Sawin \cite[Thm.~1.1]{BS2} achieves the same conclusion for any  smooth hypersurface $X\subset \PP^{n-1}$ of degree $d$, provided that
$n> (2d - 1)2^{d-1}$.
The latter result is proved using analytic number theory and builds on an approach
employed by Browning and Vishe \cite{BV'}.
The idea is to study
the moduli space of maps
$\Mor_e(\PP^1,X)$, whose expected dimension is $\mu(e,R)+3$, since
$\mathcal{M}_{0,0}(X,e)$ is obtained from $\Mor_e(\PP^1,X)$ on
taking into account an action by $\mathrm{PGL}_2$.
To calculate the dimension of $\Mor_e(\PP^1,X)$,
one  begins by working over a finite field $k=\FF_q$ of  characteristic $>d$ and counts
the  number of  points on it that are defined over a finite extension of $\FF_q$.
Finally, one combines this count with the
Lang--Weil estimate to control the irreducibility and the dimension.
In this paper we shall use the analytic number theory point of view to treat
 complete intersections with $R\geq 1$, beginning with the  following result.

\begin{theorem}\label{t:BV}
Let $d\geq 2$ and let
 $k$ be a field whose characteristic exceeds $d$ if it is positive.
Let $X\subset \PP^{n-1}$ be a smooth complete intersection over $k$,
cut out by $R$ hypersurfaces of  degree $d$.
Assume that $e\geq 1$ and
\begin{equation}\label{eq:n-bound}
n\geq
\begin{cases}
d(d-1)2^{d+1}R+R  &\text{ if $d\geq 3$,}\\
33R & \text{ if $d=2$.}
\end{cases}
\end{equation}
Then
$\mathcal{M}_{0,0}(X,e)$
is an irreducible locally complete intersection of dimension $\mu(e,R)$.
\end{theorem}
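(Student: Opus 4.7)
The plan is to follow the strategy sketched in the introduction: work over a finite base field, count $\FF_q$-rational points on the space of degree $e$ morphisms $\PP^1 \to X$ using a function field version of the circle method, and read off the geometric information via Lang--Weil. First I would reduce to the case $k = \FF_q$ with $\chara(\FF_q) > d$ by spreading out: choose a finitely generated subring over which $X$ has a smooth projective model, pass to closed fibres, and transfer information back using generic flatness together with the fact that irreducibility, dimension, and the locally complete intersection property are constructible conditions in flat families.

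Over $k = \FF_q$, a morphism $\varphi: \PP^1 \to \PP^{n-1}$ of degree $e$ is represented by an $n$-tuple $(f_1, \ldots, f_n)$ of coprime polynomials in $\FF_q[t]$ each of degree at most $e$, considered up to scalar multiplication. The condition that $\varphi$ factor through $X$ translates into a system of $R$ polynomial equations of degree $d$ in these coefficients, and the naive expected dimension of $\Mor_e(\PP^1, X)$ turns out to be $\mu(e,R)+3$: roughly $n(e+1)$ coefficient parameters minus $R(de+1)$ scalar equations coming from evaluating $R$ forms of degree $d$ on a degree $e$ tuple, minus one for the scaling. Counting the $\FF_{q^s}$-points of $\Mor_e(\PP^1, X)$ for varying $s$ thus reduces to counting coprime polynomial tuples satisfying a system of $R$ forms of degree $d$ over $\FF_{q^s}[t]$.

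The main analytic input is the function field circle method, adapted from Rydin Myerson's treatment of systems of forms over $\QQ$. The target asymptotic has the shape
\begin{equation*}
\card \Mor_e(\PP^1, X)(\FF_{q^s}) = c_s \cdot q^{s(\mu(e,R) + 3)} + O\!\left( q^{s(\mu(e,R) + 3 - \delta)} \right),
\end{equation*}
for some fixed $\delta > 0$ and positive constants $c_s$ bounded uniformly away from $0$ and $\infty$, arising from a product of local densities. The lower bound on $n$ in \eqref{eq:n-bound} is precisely what is required to force the minor arc contribution to be of smaller order than the main term: Myerson's innovation is to treat the $R$ forms simultaneously via a refined Weyl differencing, producing an $n$-threshold that is \emph{linear} in $R$ rather than quadratic as in Birch's approach, and transplanting this analysis into the $\FF_q[t]$ setting supplies both the major arc asymptotic and the minor arc savings.

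Finally, combining the asymptotic with the Lang--Weil estimate forces $\Mor_e(\PP^1, X)$ to be geometrically irreducible of dimension $\mu(e, R) + 3$ over $\FF_q$, and spreading out transfers this to arbitrary $k$. The locally complete intersection property follows because $\Mor_e(\PP^1, X)$ embeds in an open subscheme of $\Mor_e(\PP^1, \PP^{n-1})$ cut out by exactly $R$ equations coming from the defining forms of $X$, so that the attainment of the expected dimension forces these equations to form a regular sequence. Passing from $\Mor_e(\PP^1, X)$ to $\mathcal{M}_{0,0}(X, e)$ by the free $\mathrm{PGL}_2$-action removes precisely $3$ dimensions while preserving both irreducibility and the LCI property. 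The heart of the argument, and the main technical obstacle, is the function field circle method for systems: one must establish Weyl-type bounds for exponential sums attached to $R$ simultaneous forms over $\FF_q[t]$ that are sharp enough to give the range of $n$ in \eqref{eq:n-bound}, and this is where Myerson's approach must be carefully reworked in positive characteristic.
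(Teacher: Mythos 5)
Your outline matches the paper's top-level strategy (spread out to $\FF_q$, count tuples of polynomials via the circle method over $\FF_q(t)$, conclude by Lang--Weil and the deformation-theoretic lower bound, then quotient by $\mathrm{PGL}_2$ and get the lci property as in Harris--Roth--Starr), but it defers precisely the step that constitutes the paper's content, and as stated that step would fail. A direct transplant of Rydin Myerson's analysis to $\FF_q[t]$ only handles \emph{suitably generic} systems of $R$ forms when $d\geq 4$; for an arbitrary smooth complete intersection the minor arc saving is not available from his argument. The missing idea is the new auxiliary estimate (Theorem \ref{t:Nbf} and Corollary \ref{c:Nbf}): a bound for the number of $(d-1)$-tuples $\underline{\uu}$ with $|\sum_k\beta^{(k)}\Psi_i^{(k)}(\underline{\uu})|$ small, proved by cutting out an auxiliary variety from the coefficient equations and bounding its dimension by repeated fibering, using $\dim V\leq R-1$ for the Birch singular locus of a smooth complete intersection. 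This is what removes the genericity hypothesis, and it is also why the threshold is $n\geq d(d-1)2^{d+1}R+R$ rather than Myerson's $d2^dR+R$, and why one needs $q\geq (d-1)^n$ (harmless here since one lets $q\to\infty$). Your sketch asserts that \eqref{eq:n-bound} ``is precisely what is required'' for the minor arcs, but nothing in your plan produces a bound of this quality for non-generic systems; without Theorem \ref{t:Nbf} (or a substitute), Hypothesis \ref{hyp:2.1} cannot be verified and the argument stalls.

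A second, more minor but genuine, issue is the shape of the analytic input you claim. You posit a two-sided asymptotic $\#\Mor_e(\PP^1,X)(\FF_{q^s})=c_s q^{s(\mu(e,R)+3)}+O(q^{s(\mu(e,R)+3-\delta)})$ with $c_s$ bounded away from $0$, uniformly for all $e\geq 1$. The paper does not prove this and does not need it: for irreducibility and dimension an \emph{upper} bound of the form $\limsup_{q\to\infty} q^{-(e(n-dR)+n-R)}N(q,e)\leq 1$ (Theorem \ref{t:goat}, proved by summing over major-arc levels $\mathfrak{M}(J+1)\setminus\mathfrak{M}(J)$ rather than a single major/minor dichotomy, with all implied constants uniform in $q$) suffices, since the lower bounds come from deformation theory plus Lang--Weil applied to each component. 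Positivity of the leading constant is only established for much larger degrees ($e\geq 2(d-1)R+3d$, Theorem \ref{thm:low}), so insisting on a full asymptotic for every $e\geq 1$ is both unproven and unnecessary; relatedly, all circle-method error terms must be made explicit in $q$, which is a nontrivial bookkeeping constraint your sketch does not address.
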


For comparison, it follows from work of Beheshti and Kumar \cite[Thm.~7.3]{BK} that a similar statement holds for {\em generic} complete intersections under the assumption $dR<(2n+R-1)/3$, which is equivalent to
$n>\frac{1}{2}(3dR-R+1)$.

Let $X$ be a smooth complete intersection over a field $k$, as in
the statement of Theorem \ref{t:BV}.
Let $B\subset \PP^1$ be the closed subscheme formed from
 $p_1,\dots,p_b\in \PP^1$. Consider the  map
\begin{equation}\label{eq:red}
\mathrm{red}: \Mor_e(\PP^1,X)\to \Mor(B,X)
\end{equation}
that is obtained by reducing to the subscheme $B$, where
$\Mor_e(\PP^1,X)$ is the
moduli space of maps and $\Mor(B,X)$ is isomorphic to $X^b$.
For any
 $y_1,\dots,y_b\in X$, we define
\begin{equation}\label{eq:def_M}
\mathrm{M}_{e,b}=\Mor_e(\PP^1,X;p_1,\dots,p_b;y_1,\dots,y_b)
\end{equation}
to be the moduli space of degree $e$ morphisms
$g:\PP^1\to X$ such that
$g(p_j)=y_j$ for $1\leq j\leq b$.  Thus
$\mathrm{M}_{e,b}$ is the  fibre of the map \eqref{eq:red} over
$(y_1,\dots,y_b)\in X^b$.
(Note that  $\mathrm{M}_{e,0}=\Mor_e(\PP^1,X)$
when  $b=0$.)
As explained by Debarre \cite[\S~2.3]{Debarre},
viewed as a fibre of the reduction map,
the expected dimension of $\mathrm{M}_{e,b}$ is
\begin{align*}
\dim \Mor_e(\PP^1,X)-b\dim X&=\mu(e,R)+3-b(n-1-R)\\
&= e(n-Rd) +(n-1-R)(1-b).
\end{align*}
In his lecture at the Banff workshop
``Geometry via Arithmetic'' in July 2021, Will Sawin  raised the question of
tackling the irreducibility of  the space $\mathrm{M}_{e,b}$ and calculating its dimension, pointing out that methods from
algebraic geometry haven't yet been made to yield this information, even  for generic complete intersections $X$.
This is the object of the following result.

\begin{theorem}\label{t:BV2}
Let $d\geq 2$ and let
 $k$ be a field whose characteristic exceeds $d$ if it is positive.
Let $X\subset \PP^{n-1}$ be a smooth complete intersection over $k$,
cut out by $R$ hypersurfaces of degree $d$.
Let $p_1,\dots,p_b\in \PP^1$ and let $y_1,\dots,y_b\in X$, for $b\geq 1$.
Assume that
$
e\geq (d+1+R)b
$
and \eqref{eq:n-bound} holds.
Then
$\mathrm{M}_{e,b}$
is  irreducible and has the expected  dimension.
\end{theorem}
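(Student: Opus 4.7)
The strategy follows the pattern of Theorem~\ref{t:BV}: spread out to reduce to $k = \FF_q$ of characteristic exceeding $d$, count $\FF_{q^s}$-rational points of $\mathrm{M}_{e,b}$ via the function field circle method, and invoke Lang--Weil. By an extension of the deformation-theoretic argument recalled for $\mathcal{M}_{0,0}(X,e)$ to the constrained setting, every irreducible component of $\mathrm{M}_{e,b}$ has dimension at least the expected value $N := e(n-Rd) + (n-1-R)(1-b)$, so it suffices to establish an asymptotic
$$
\# \mathrm{M}_{e,b}(\FF_{q^s}) = c\tsp q^{sN}\bigl(1 + O(q^{-s\delta})\bigr)
$$
for some $c > 0$ and $\delta > 0$ as $s \to \infty$: Lang--Weil then forces a unique top-dimensional component of the expected dimension.

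To set up the count, fix affine lifts $\tilde y_j \in \AA^n \setminus \0$ of the points $y_j \in X \subset \PP^{n-1}$. A morphism $g : \PP^1 \to X$ of degree $e$ with $g(p_j) = y_j$ corresponds, up to the diagonal action of $\FF_{q^s}^*$, to an $n$-tuple $\g = (g_1, \dots, g_n) \in \FF_{q^s}[t]^n$ with coprime content and $\max_i \deg g_i = e$, together with scalars $\lambda_j \in \FF_{q^s}^*$, satisfying $F_r(\g) \equiv 0$ in $\FF_{q^s}[t]$ for each of the $R$ forms defining $X$, and $\g(p_j) = \lambda_j \tilde y_j$. For each $\bla = (\lambda_1, \dots, \lambda_b)$, Lagrange interpolation yields a unique $n$-tuple $\g_0^{\bla}$ of polynomials of degree $< b$ with $\g_0^{\bla}(p_j) = \lambda_j \tilde y_j$, and every admissible $\g$ is uniquely of the form
$$
\g = \g_0^{\bla} + \Delta(t)\tsp\h, \qquad \Delta(t) = \prod_{j=1}^b (t - p_j),
$$
where $\h \in \FF_{q^s}[t]^n$ has each component of degree at most $e - b$. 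Counting $\mathrm{M}_{e,b}(\FF_{q^s})$ then reduces to counting admissible $\h$'s for each $\bla$, summing over $\bla \in (\FF_{q^s}^*)^b$, and dividing by $q^s - 1$ to absorb the residual global scaling.

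The inner count is carried out by the function field circle method used for Theorem~\ref{t:BV}, now applied to the system of $R$ degree-$d$ forms $F_r(\g_0^{\bla} + \Delta\tsp\h)$ regarded as polynomials in the $n(e - b + 1)$ coefficients of $\h$. Since the substitution is a translation composed with multiplication by the fixed polynomial $\Delta$, the Weyl-type minor arc estimates carry through, under \eqref{eq:n-bound}, with $e$ replaced throughout by $e - b$. The major arc analysis produces a main term
$$
\mathfrak{S}(\g_0^{\bla}) \tsp q^{s(n(e - b + 1) - R(de + 1))},
$$
where $\mathfrak{S}(\g_0^{\bla})$ is a product of local densities. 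Smoothness of $X$ at each $y_j$ ensures that the local density at the prime $t - p_j$ is positive and bounded below uniformly in $\bla$, while the remaining local and archimedean densities are as in the proof of Theorem~\ref{t:BV}. Summing over $\bla$ and dividing by $q^s - 1$ then delivers the desired asymptotic $c \tsp q^{sN}$ with $c > 0$.

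The principal obstacle, and the source of the hypothesis $e \geq (d+1+R)b$, lies in controlling the minor arcs and the main term after the substitution $\g = \g_0^{\bla} + \Delta\tsp\h$. The twisted forms $F_r(\g_0^{\bla} + \Delta\tsp\h)$ still have degree $de$ in $t$, so $R(de+1)$ scalar equations must vanish while $\h$ supplies only $n(e-b+1)$ coefficients; the excess $n(e-b+1) - R(de+1)$ must therefore grow, and the Weyl differencing must be stable under the translation by $\g_0^{\bla}$ uniformly in $\bla$. Careful tracking of these dependencies shows that, once \eqref{eq:n-bound} is imposed, the remaining constraint on $b$ becomes linear and reduces precisely to $e \geq (d+1+R)b$, the factor $d+1$ reflecting the cost of the shift in the differencing and the factor $R$ the cost of simultaneously handling $R$ equations. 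Ensuring uniformity of $\mathfrak{S}(\g_0^{\bla})$ in $\bla$ and handling the coprimality condition on $\g$ via a standard M\"obius-type inversion are then routine, and the combined asymptotic is converted by Lang--Weil into the asserted irreducibility and dimension.
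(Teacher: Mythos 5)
Your skeleton coincides with the paper's: the sum over $\bla\in(\FF_q^*)^b$ is Proposition~\ref{unfree-bound}, the interpolation step $\g=\g_0^{\bla}+\Delta\,\h$ is exactly the conversion of the conditions $\g(c_j)=\lambda_j\tilde y_j$ into a single congruence $\g\equiv \mathbf{s}\bmod m$ with $m=\prod_j(t-c_j)$, so that the inner count is $N(\f;e+1-b,m,\mathbf{s})$ in the notation of \eqref{eq:defn-NP}, and the conclusion is drawn from a $q$-uniform count plus Lang--Weil as in \cite{BV'}. The first genuine problem is your main-term bookkeeping. Since $f_r(\g_0^{\bla}(c_j))=\lambda_j^{d}f_r(\tilde y_j)=0$, the polynomial $f_r(\g_0^{\bla}+\Delta\h)$ is divisible by $\Delta$ identically in $t$, so only $R(de+1-b)$ coefficient equations are imposed, not $R(de+1)$; equivalently, the local densities of the twisted system at the primes $t-p_j$ are of size $q^{R}$ each, not ``bounded below uniformly'' (this is the factor $|m|^{R}$ in Lemma~\ref{lem:snorlax} and the reason $\mathfrak S\ll q^{R\deg m}$ in Lemma~\ref{lem2.5iv}). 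With your exponent $n(e-b+1)-R(de+1)$, summing over $\bla$ and dividing by $q^{s}-1$ yields $\asymp q^{s(N-Rb)}$, where $N$ is the expected dimension of $\mathrm{M}_{e,b}$ --- which contradicts the deformation-theoretic lower bound you yourself invoke, so the asymptotic as stated cannot be right; the correct per-$\bla$ main term is $q^{s(e(n-dR)+(n-R)(1-b))}$, as in Theorem~\ref{t:goat}. Since your heuristic for the hypothesis $e\ge(d+1+R)b$ rests on this count, that constraint is asserted rather than derived; in the paper it falls out of verifying \eqref{eq:FIN2}, i.e.\ of making the cut-off $J_0=e-db-b-1$ between the major-arc regime (Lemma~\ref{lem:IJ-small}) and the minor-arc regime (Lemma~\ref{lem:IJ-large}) large enough relative to $R(d-1)$.

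The second issue is the Lang--Weil step: an asymptotic $c\,q^{sN}(1+o(1))$ with an unspecified constant $c>0$ does not force irreducibility; one needs the constant to be $<2$, in practice $1+o(1)$. The paper sidesteps a full asymptotic entirely: Theorem~\ref{t:goat} is purely an upper bound, $\limsup_{q\to\infty}q^{-(e(n-dR)+(n-R)(1-b))}N(q,e;\a_1,\dots,\a_b)\le 1$, obtained after discarding the coprimality and exact-degree conditions, and it is paired with the lower bound $\dim\ge N$ for every component of $\mathrm{M}_{e,b}$ from the fibre-dimension theorem. If you insist on an asymptotic with positive constant, you must in addition show that the constant tends to $1$, which again requires the corrected normalisation of the local factors at $t-p_j$, and all circle-method estimates must be uniform in $q$ (the role of $q\ge(d-1)^{n}$, Hypothesis~\ref{hyp:2.1} and the explicit error terms of Theorem~\ref{p2.1}); this uniformity is the technical heart of the proof and is passed over in your sketch.
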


When $d=3$ and $n\geq 10$, work of
M\^{a}nz\u{a}\cb{t}eanu \cite[Cor.~1.4]{adelina} addresses the
geometry of the space
$\mathrm{M}_{e,2}$ in the special case  $R=1$ of hypersurfaces.
Since $X$ is smooth, the Hessian covariant is non-vanishing.
Assuming that
$y_1,y_2\in X$ don't both vanish on the Hessian, and  that
$e \geq 19-\frac{2}{n-9}$, it is shown that
$\mathrm{M}_{e,2}$
is  irreducible and of the expected dimension.
Theorem \ref{t:BV2} arrives at the same conclusion for any $e\geq 10$,
provided that $n\geq 97$.

\subsection*{Arithmetic over function fields}

Our work on the geometry of $\mathrm{M}_{e,b}$ goes via a spreading out argument, as in \cite{BS2, BV'}, which will lead us to study the size of
$\#\mathrm{M}_{e,b}(\FF_q)$ for a suitable finite field $\FF_q$.
This in turn will be accessed through
a version of the
Hardy--Littlewood circle method
over the function field $\FF_q(t)$.
Let  $X\subset \PP^{n-1}$
 be a smooth complete intersection
cut out by $R$ hypersurfaces of equal degree $d\geq 2$,
all of which are defined over $\FF_q$. In suitable circumstances we shall be able to count $\FF_q(t)$-rational points on $X$ of bounded height via the circle method.
Over $\QQ$, this is carried out in a  classic paper of Birch \cite{birch}, which
is capable of proving that the Hasse principle and weak approximation hold, provided that $n\geq R+R(R+1)2^{d-1}(d-1)$, a result that has already been extended to $\FF_q(t)$ by
Lee \cite{lee}. In fact,
as described in \cite[Thms.~3.4 and 3.6]{green},
it follows from
the Lang--Nagata--Tsen theorems that $X(\FF_q(t))\neq \emptyset$ whenever $n > Rd^2$.

The  particular version of the circle method we shall use  is a function field analogue of the work recently carried out over $\QQ$ by Rydin Myerson \cite{simon, simon'}. It has the  key feature that it only depends linearly on the number $R$ of equations, rather than quadratically as in the case of Birch's result.
To be precise, if  $n\geq d 2^d R+R$ and the system of forms $f_1,\dots,f_R$ is  {\em suitably generic}, then one can conclude the Hasse principle and weak approximation over $\QQ$.
(Note that the genericity condition on the system of forms can be removed when $d=2$ \cite{simon} and when $d=3$ \cite{simon'}.)
When working in positive characteristic, it turns out that we can
remove the genericity condition for every $d$, provided that the finite field is large enough and we make a stronger assumption on the number of variables.

\begin{theorem}\label{t:simon}
Let $d,n,R\in \NN$ such that $d\geq 2$ and
$$
n>
\begin{cases}
d(d-1)2^{d}R+R  &\text{ if $d\geq 3$,}\\
17R & \text{ if $d=2$.}
\end{cases}
$$
 Assume that $\FF_q$ is a finite field such that
  $\chara(\FF_q)>d$
and $q\geq (d-1)^n$.
Let $f_1,\dots,f_R\in \FF_q[x_1,\dots,x_n]$ be forms of degree $d$ cutting out a
 smooth complete intersection  $X\subset \PP^{n-1}$.
Then $X$ satisfies  weak approximation over $\FF_q(t)$.
\end{theorem}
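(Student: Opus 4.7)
The plan is to establish weak approximation for $X$ over $\FF_q(t)$ via a function field version of the Hardy--Littlewood circle method, following the strategy of Rydin Myerson \cite{simon, simon'} over $\QQ$. It suffices to show that for every finite set of places $S$ of $\FF_q(t)$ and every non-empty open set $\prod_{v\in S} U_v \subset \prod_{v\in S} X(\FF_q(t)_v)$, there exists a rational point of $X$ whose image in $X(\FF_q(t)_v)$ lies in $U_v$ for each $v\in S$. This reduces to proving an asymptotic formula with a strictly positive main term for the counting function tallying $\FF_q(t)$-points on $X$ of bounded height subject to these local constraints.

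Following the standard function field circle method setup (as in Lee's extension \cite{lee} of Birch \cite{birch}), one parametrises such points by coprime $n$-tuples $\x=(x_1,\dots,x_n) \in \FF_q[t]^n$ of bounded degree and writes
\begin{equation*}
N(P) = \sum_{\substack{\x \in \FF_q[t]^n \\ \deg x_j \leq P}} \1_{\f(\x)=\0} \prod_{v\in S} \1_{\x \in U_v} = \int_{\TT^R} S(\bal)\, \d\bal,
\end{equation*}
where $\TT = \Ki/\FF_q[t]$ with $\Ki = \FF_q((1/t))$, and $S(\bal)$ is the appropriate exponential sum with the local conditions absorbed. A Dirichlet-type approximation theorem for $\FF_q(t)$ then dissects $\TT^R$ into major and minor arcs in the usual way.

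The minor arc bound is the heart of the matter, and the main obstacle. One needs a function field analogue of Rydin Myerson's Weyl-type inequality yielding a power saving that is \emph{linear} in the number $R$ of forms, in contrast to the quadratic dependence inherent in Birch's method. The argument applies $(d-1)$-fold Weyl differencing to the multilinear form associated to $\f$, reducing the estimate to a lattice point count in $\FF_q[t]^{(d-1)n}$ on which certain bilinear forms take small values. The characteristic condition $\chara(\FF_q)>d$ ensures the multilinearisation identities remain valid. The role of Rydin Myerson's genericity assumption on $\f$ in characteristic zero is played in our setting by the size condition $q \geq (d-1)^n$: this permits a Schwartz--Zippel / Lang--Weil style estimate for the number of $\FF_q$-points on the auxiliary subvarieties produced by differencing, removing the need for any generic hypothesis on $\f$ at the cost of slightly inflated numerical constraints on $n$.

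The major arcs then yield the expected main term of order $q^{(n-Rd)P}$, factorising as a singular integral $\mathfrak{J}_\infty$ times a singular series $\mathfrak{S} = \prod_v \sigma_v$ over the finite places $v$ of $\FF_q(t)$. Smoothness of $X$ and Hensel's lemma imply $\sigma_v > 0$ at every place admitting a local point; the bound on $n$ in particular forces $n > Rd^2$, so by Lang--Nagata--Tsen local solubility holds everywhere and the unconditional singular series is positive. Incorporating the local constraints at $v \in S$ merely restricts the corresponding $\sigma_v$ to an open subset of local points, which remains positive by smoothness of $X$ and non-emptiness of $U_v$. Consequently $N(P) > 0$ for all sufficiently large $P$, yielding the desired global point and thereby weak approximation.
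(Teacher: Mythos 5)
Your overall route is the same as the paper's: a function field circle method in the style of Rydin Myerson, with weak approximation encoded by congruence/height conditions and positivity of the local densities coming from smoothness, and with the hypothesis $q\geq (d-1)^n$ invoked to dispense with any genericity assumption. However, as written the minor-arc step — the part you rightly call the heart of the matter — contains a genuine gap. Applying $(d-1)$-fold Weyl differencing to the system and then counting lattice points on the resulting auxiliary varieties is precisely the Birch--Lee argument, and it yields the quadratic constraint $n-\sigma>R(R+1)2^{d-1}(d-1)$, not a bound linear in $R$. The linear dependence in Rydin Myerson's method does not come from the differencing itself but from a structurally different mechanism, which your sketch omits: one differences the \emph{single} form $\bbe.\f$ to obtain a two-point dichotomy bounding $\min\{|S(\bal;P)|,|S(\bal+\bbe;P)|\}$ purely in terms of $\|m^d\bbe\|$ (the paper's Hypothesis \ref{hyp:2.1}, proved via Lemma \ref{last2} and Corollary \ref{c:Nbf}), and one then bounds the \emph{measure} of the level sets of $|S(\bal;P)|$ by a covering argument (Lemma \ref{lem2.2}), rather than integrating a pointwise Weyl bound over Dirichlet-style minor arcs, which would reintroduce the lost factor of $R$.

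Relatedly, your description of how $q\geq(d-1)^n$ replaces genericity is too coarse to carry the proof. The substitute is not a one-off Schwartz--Zippel bound on the differenced variety: the needed estimate (Theorem \ref{t:Nbf}) exploits the fact that $f_1,\dots,f_R$ have coefficients in the \emph{constant} field $\FF_q$, so that after expanding the polynomial variables of degree $<J$ into their $\FF_q$-coefficients, the auxiliary conditions contain roughly $(J+M)/(d-1)$ essentially independent copies of the Birch singular locus condition, each forcing codimension at least $n-\sigma\geq n-R+1$ (here smoothness of $X$ enters via $\sigma\leq R-1$); the saving must grow with $J$ for the level-set argument to close. The B\'ezout degree factors $(d-1)^{O(Jn)}$ arising in this count are exactly what force the hypothesis $q\geq(d-1)^n$, and feeding the resulting Corollary \ref{c:Nbf} into the dichotomy of Lemma \ref{last1} is what verifies Hypothesis \ref{hyp:2.1} with an explicit constant. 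Your treatment of the major arcs, the positivity of $\mathfrak{S}$ and $\mathfrak{I}$ via smooth local points and the $C_1$/$C_2$ properties of the completions, and the bookkeeping of the local conditions via $\g\equiv\b\bmod{m}$ (as in Lee) are all consistent with the paper and unproblematic; the missing content is precisely the two ingredients above, which constitute the paper's main technical contribution.
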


Our assumption on the number of variables is more stringent than the
bound $n\geq d 2^d R+R$ obtained by Rydin Myerson for generic forms, although it retains the feature that it is  linear in the number $R$ of equations.    This can be traced to the nature of the saving in the main new technical ingredient, provided in Theorem
\ref{t:Nbf}, which allows us to handle {\em all} forms defining a smooth complete intersection in the function field setting.

\subsection*{Low degree rational curves}

Let us return to the setting of Theorem \ref{t:BV}, specialised to the case $k=\FF_q$,  a finite field whose characteristic exceeds $d$.
Under the assumptions of the theorem our work ensures that
$\mathcal{M}_{0,0}(X,e)$
is an irreducible locally complete intersection of dimension $\mu(e,R)$, for each $e\geq 1$.
In the setting $d=3$ and $R=1$ of smooth cubic hypersurfaces,
Koll\'ar \cite{kollar} ask about an explicit lower bound on $e$,  sufficient to ensure that $X$ contains a degree $e$ rational curve defined over  $\FF_q$.
Building on the proof of Theorem \ref{t:simon},
our final result does exactly this in the broader setting of complete intersections, provided that  $q$ is sufficiently large.

\begin{theorem}\label{thm:low}
There exists a constant $c_{d,n}>0$ depending only on $d$ and $n$ such that the following holds.
Let $d\geq 2$ and let $\FF_q$ be a finite field such that
$\mathrm{char}(\FF_q)>d$ and $q\geq c_{d,n}$.
Let $X\subset \PP^{n-1}$ be a smooth complete intersection over $\FF_q$,
cut out by $R$ hypersurfaces of  degree $d$.
Assume that  \eqref{eq:n-bound} holds and
$e\geq 2(d-1)R+3d
$.
Then
$\mathcal{M}_{0,0}(X,e)(\FF_q)\neq \emptyset$.
\end{theorem}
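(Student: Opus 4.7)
The plan is to apply the function-field circle method behind Theorem \ref{t:simon} directly to count $\FF_q$-rational degree-$e$ morphisms $g : \PP^1 \to X$, and to show this count is strictly positive. Since $\mathrm{PGL}_2(\FF_q)$ acts on the set of such morphisms with quotient mapping into $\mathcal{M}_{0,0}(X,e)$, exhibiting a single such $g$ over $\FF_q$ suffices. A degree $e$ morphism is specified, up to scaling by $\FF_q^\times$, by an $n$-tuple $(x_1,\ldots,x_n) \in \FF_q[t]^n$ of coprime polynomials with $\max_i \deg x_i = e$ satisfying $f_j(x_1,\ldots,x_n) \equiv 0$ in $\FF_q[t]$ for every $1 \leq j \leq R$.

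First, I would apply the circle method to the unrestricted count
\[
N(e) = \#\set{\x \in \FF_q[t]^n : \deg x_i \leq e,\ f_j(\x) = 0 \text{ for all } j}.
\]
The machinery developed for Theorem \ref{t:simon}, and in particular the key technical ingredient of Theorem \ref{t:Nbf}, should yield an asymptotic formula
\[
N(e) = \mathfrak{S}\, \mathfrak{I}\, q^{(e+1)n - R(de+1)} + O\!\left(q^{(e+1)n - R(de+1) - \eta e}\right)
\]
with some $\eta = \eta(d,n) > 0$ independent of $e$ and $q$, and with singular series $\mathfrak{S}$ and singular integral $\mathfrak{I}$ reflecting the natural local densities of $X$. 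The hypothesis $q \geq c_{d,n}$ enters precisely to secure the uniform lower bound $\mathfrak{S},\, \mathfrak{I} \geq c'(d,n) > 0$, obtained via Hensel lifting and the Lang--Weil estimate applied to the smooth reduction of $X$ at each monic irreducible of $\FF_q[t]$ and at the place at infinity.

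Second, M\"obius inversion on $\gcd(x_1,\ldots,x_n)$, combined with a direct comparison of $N(e)$ with $N(e-1)$, isolates the contribution from primitive tuples of exact maximal degree $e$. Under \eqref{eq:n-bound} we have $n - Rd \geq 2$, so the resulting Euler-type corrections are convergent and of strictly lower order, yielding
\[
\#\Mor_e(\PP^1, X)(\FF_q) = (q-1)\,\mathfrak{S}\,\mathfrak{I}\, q^{\mu(e,R)+3}\bigl(1 + o(1)\bigr) + O\!\left(q^{\mu(e,R)+3 - \eta e}\right).
\]
Dividing by $|\mathrm{PGL}_2(\FF_q)| = q^3 - q$ gives an analogous asymptotic on $\mathcal{M}_{0,0}(X, e)$, whose leading term is of order $q^{\mu(e,R)}$.

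The main obstacle, and the source of the threshold $e \geq 2(d-1)R + 3d$, is to force the circle method error $q^{-\eta e}$ to dominate all the combined lower-order losses. A careful reading of the proof of Theorem \ref{t:simon} should show that $\eta$ depends only on $d$ (via the Weyl-differencing saving on the minor arcs), while the additive loss from the $\mathrm{PGL}_2$ quotient and the M\"obius correction contributes a fixed $O(1)$ deficit; the linear-in-$R$ coefficient $2(d-1)R$ arises from balancing the saving against the $R$ polynomial equations used to define $X$, and the additive $3d$ absorbs the $q^3$ loss of the quotient together with the implicit constants from $\mathfrak{S}\,\mathfrak{I}$. Once this comparison of exponents is verified for every $e \geq 2(d-1)R + 3d$, choosing $c_{d,n}$ large enough to dominate the remaining implicit constants yields $\#\mathcal{M}_{0,0}(X, e)(\FF_q) > 0$, completing the proof.
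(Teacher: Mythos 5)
Your route is the paper's route: specialise the circle-method asymptotic (Theorem \ref{p2.1} with $m=1$, $\b=\0$) to the affine count, control $\mathfrak S$ and $\mathfrak I$ uniformly in $q$, remove the gcd by M\"obius inversion, pass to exact degree $e$ by differencing consecutive counts, and divide by $(q-1)(q^3-q)$ as in Proposition \ref{unfree-bound2}. The gap is that the quantitative heart of the statement --- that the specific threshold $e\geq 2(d-1)R+3d$ suffices --- is asserted rather than verified, and the accounting you offer for it is not correct. The $\mathrm{PGL}_2$ quotient and the factor $q-1$ cost nothing, since the main term and all error terms scale identically under them; and the saving exponent is not a function of $d$ alone, but depends on $d,n,R$ through $\delta_0,\delta_1,\delta_2$ in \eqref{eq:glass}. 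Moreover the error in Theorem \ref{p2.1} is not simply the main term times $q^{-\eta e}$: it carries additive exponents such as $n+\delta_2(1+R(d-1))$ and $1+dR+\delta_0(1-\frac{dR}{\mathcal C})$, and the M\"obius truncation contributes a further term of size $q^{(d-1)R(n-1)+P}$. These additive exponents are precisely what generate the lower bound on $e$: the binding constraints are the three inequalities \eqref{eq:karikal}, of which the decisive one is the major-arc comparison $\delta_2\bigl(e-(d-1)R\bigr)+(d-1)R>n$ with $\delta_2=(1-\frac1d)(\frac nd-R)-1$, checked via $\delta_2\geq (n-dR-2d)/(2d)$; this is where $e\geq 2(d-1)R+3d$ is actually used, not in absorbing any loss from the quotient.

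A second point needing care is uniformity in $q$ of the local factors. Positivity of $\mathfrak S$ and $\mathfrak I$ (Lemmas \ref{lem2.5iv} and \ref{si}) is not by itself uniform in $q$, and your argument needs a bound independent of $q$; your suggestion (Hensel lifting plus Deligne/Lang--Weil at each place) is the right mechanism, but it must be carried out: the paper proves $\mathfrak S=1+O(q^{-(n-R-3)/2})$ and $\mathfrak I=q^{(d-1)R}\bigl(1+O(q^{-(n-R-1)/2})\bigr)$ (Lemmas \ref{ssssssssss} and \ref{intttt}), and the extra factor $q^{(d-1)R}$ from $\mathfrak I$ is exploited in \eqref{eq:karikal}. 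With only a constant lower bound $\mathfrak S\mathfrak I\geq c'(d,n)$, as you propose, the exponent comparisons lose this factor and must be rechecked against \eqref{eq:n-bound}. Finally, there is a harmless bookkeeping slip: your displayed asymptotic for $\#\Mor_e(\PP^1,X)(\FF_q)$ carries an extra factor $q-1$ (it is the count of primitive tuples, not of morphisms), though your final assertion that $\#\mathcal M_{0,0}(X,e)(\FF_q)$ has order $q^{\mu(e,R)}$ is the correct one.
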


Let $X\subset\PP^{n-1}$ be a smooth cubic hypersurface over $\FF_q$.
 In \cite[Example~7.6]{kollar}, Koll\'ar proves that for any  $q \geq  c_{3,n}$ and any point $y \in X(\FF_q)$, there exists a $\FF_q$-rational curve of degree at most $216$ on $X$ passing through $y$.
Although it will not be pursued here, it would be possible to
adapt the proof of Theorem \ref{thm:low} to yield explicit conditions on $e$ under which
$\mathrm{M}_{e,b}(\FF_q)\neq \emptyset$, for any given $b\geq 1$ and any smooth complete intersection $X\subset \PP^{n-1}$ for which
 \eqref{eq:n-bound} holds.

\begin{ack}
The authors are very grateful to  Lo\"is Faisant  and Jakob Glas for several useful comments.
The first and third authors were supported  by
a FWF grant (DOI 10.55776/P32428). Part of this work was supported by the Swedish Research
Council under grant no. 2016-06596, while the first and second authors were in residence at the Mittag-Leffler Institute in 2024.
\end{ack}

\section{Background on finite fields and  function fields}

\subsection{Points on varieties over finite fields}

The Lang--Weil estimate can be used to give an upper bound for the number of
$\FF_q$-points on a variety over a finite field $\FF_q$.
We will need a general version of this that features  convenient explicit constants. The following bound is proved in  \cite[Lemma 2.1]{CG}, but we include a full proof here for the sake of completeness.

\begin{lemma}
\label{LWbdd}
Let $X \subset \mathbb{A}^n$ be an affine variety of dimension $r$ and degree $\delta$. Then
$
\# X(\FF_q) \leq \delta q^r.
$
\end{lemma}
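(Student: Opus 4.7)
The plan is to prove the bound by induction on the dimension $r = \dim X$, after first reducing to the case where $X$ is irreducible. For the reduction, decompose $X = \bigcup_{i=1}^{s} X_i$ into irreducible components of dimensions $r_i \leq r$ and degrees $\delta_i$. With the standard convention $\delta = \sum_i \delta_i$, if the bound is established for each irreducible $X_i$, then summing and using $q^{r_i} \leq q^r$ gives the desired inequality for $X$. It thus suffices to treat an irreducible affine variety $X$ of dimension $r$ and degree $\delta$.

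The base case $r=0$ is immediate: $X$ is then a finite $\FF_q$-scheme, and the number of $\FF_q$-points is bounded above by the length of $X$, which equals $\delta$. For the inductive step with $r \geq 1$, since $X$ is irreducible of positive dimension it cannot be a single point, so at least one coordinate function $x_i$ is non-constant on $X$; equivalently $X$ is not contained in any hyperplane of the form $\{x_i = c\}$. For each $c \in \FF_q$ the slice $Y_c = X \cap \{x_i = c\}$ is then a proper closed subvariety of the irreducible $X$, so $\dim Y_c \leq r - 1$, and an application of B\'ezout's theorem to the projective closure $\bar X$ intersected with the projective hyperplane extending $\{x_i = c\}$ yields $\deg Y_c \leq \delta$. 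Applying the inductive hypothesis to each $Y_c$ (which is handled by the reduction step even though $Y_c$ need not be irreducible) gives $\#Y_c(\FF_q) \leq \delta q^{r-1}$, and summing over the $q$ values of $c \in \FF_q$ produces
$$\#X(\FF_q) = \sum_{c \in \FF_q} \#Y_c(\FF_q) \leq q \cdot \delta q^{r-1} = \delta q^r.$$

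The main technical point is the B\'ezout-type degree bound $\deg Y_c \leq \delta$ holding for \emph{every} $c \in \FF_q$, not just for a generic one; this requires passing to the projective closure and invoking the fact that intersecting a pure-dimensional projective variety of degree $\delta$ with a hyperplane that does not contain any of its irreducible components yields a subscheme of degree at most $\delta$. A minor bookkeeping subtlety is pinning down the convention for the degree of a non-equidimensional variety so that the component-by-component reduction is clean; the choice $\delta = \sum_i \deg X_i$ summed over all irreducible components is consistent with what is needed.
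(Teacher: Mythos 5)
Your proof is correct and follows essentially the same route as the paper: induction on the dimension, reduction to irreducible components, and slicing by coordinate hyperplanes $\{x_i = c\}$ chosen so that every slice is a proper intersection of dimension $\leq r-1$ and degree $\leq \delta$. The only difference is cosmetic — you justify the degree bound on the slices explicitly via B\'ezout on the projective closure, which the paper simply asserts.
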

\begin{proof}
We argue by induction on $r$. When $r=0$ then $X$ consists of at most $\delta$ points and the lemma is trivial.
Assume now that we have $r \geq 1$.
Let $Z$ be an irreducible component of $X$, with  $\dim Z \geq 1$.
We shall show that there is an index $1 \leq i \leq n$ such that
the intersection of $Z$ with  the hyperplane $H_{\alpha} = \{ x_i = \alpha \}$ satisfies
$$
\dim (Z \cap H_{\alpha} ) < \dim Z,
$$
for any $\alpha \in \FF_q$. Suppose otherwise, so that there exists $\alpha_1,\dots,\alpha_n \in \FF_q$,  such that
$
Z\subset \{ x_j = \alpha_j \}
$
for each $1 \leq j \leq n$.
However, this means that $Z = \{ (\alpha_1, \ldots, \alpha_n) \}$, which contradicts the assumption that $\dim Z \geq 1$.
Since $Z \cap H_{\alpha}$ has dimension at most $\dim Z -1$ and degree at most $\deg Z$, it follows by the induction hypothesis that
$$
\# Z(\FF_q) \leq \sum_{\alpha \in \FF_q} \#  (Z \cap H_{\alpha} ) (\FF_q) \leq \sum_{\alpha \in \FF_q} (\deg Z) q^{\dim Z - 1}  \leq  (\deg Z) q^{\dim Z}.
$$
Therefore, if we denote by $Z_1, \ldots, Z_u$ the irreducible   components of $X$, then we obtain
$$
\# X(\FF_q) \leq
\sum_{1 \leq i \leq u} \# Z_i(\FF_q) \leq \sum_{1 \leq i \leq u} (\deg Z_i) q^{\dim Z_i}  \leq  \sum_{1 \leq i \leq u} (\deg Z_i) q^{\dim X}.
$$
Since $\deg X = \sum_{1 \leq i \leq u} \deg Z_i$, the result follows.
\end{proof}

\subsection{Function field notation}
In this section we collect together some notation and basic facts concerning the function field $K=\FF_q(t)$.
Let
 $\Omega$ be the set of  places of $K$. These correspond to either monic irreducible polynomials $\varpi$ in $\FF_q[t]$, which we call the {\em finite primes},  or the {\em prime at infinity} $t^{-1}$ which we usually denote  by $\infty$.
The associated absolute value  $|\cdot|_v$ is either $|\cdot|_\vp$ for some prime $\vp\in \FF_q[t]$ or $|\cdot|_{\infty}$, according to whether $v$ is a finite or infinite place, respectively.
These  are given by
$$
|a/b|_\vp=\left(\frac{1}{q^{\deg \vp}}\right)^{\ord_\vp(a) - \ord_\vp(b)} \quad \text{ and }\quad
|a/b|_\infty=
q^{\deg a-\deg b},
$$
for any $a/b\in K^*$, where $\ord_\vp(a)$ is the non-negative integer $\ell$ satisfying
$\varpi^\ell \| a$. We extend these definitions to  $K$ by taking $|0|_\vp=|0|_\infty=0.$
We will usually just write $|\cdot|=|\cdot|_\infty$.

For $v\in \Omega$ we let $K_v$ denote the completion of $K$ at $v$ with respect to $|\cdot|_v$,
together with its ring of integers $\mathcal{O}_v$.
We can extend the absolute value at the infinite place to $K_\infty$ to get  a non-archimedean
absolute value
$|\cdot|:K_\infty\rightarrow \RR_{\geq 0}$
given by $|\alpha|=q^{\ord \alpha}$, where $\ord \alpha$ is the largest $i\in
\ZZ$ such that $a_i\neq 0$ in the
representation $\alpha=\sum_{i\leq N}a_it^i$.
In this context we adopt the convention $\ord 0=-\infty$ and $|0|=0$.
We extend this to vectors by setting
$
|\x|=\max_{1\leq i\leq n}|x_i|,
$
for any $\x\in K_\infty^n$. It satisfies  the ultrametric inequality
$
|\x + \y| \leq \max \{ |\x|, |\y| \},
$
for any $\x, \y \in K_\infty^n$. For given $\x,\b\in \FF_q[t]^n$  and $m \in \FF_q[t]$ we will  write $\x\equiv \b
\bmod{m}$ to mean that
$\x=\b+ m\y$ for some $\y\in \FF_q[t]^n$.

We may identify $K_\infty$ with the set
$$
\FF_q((t^{-1}))=\left\{\sum_{i\leq N}a_it^i: \mbox{for $a_i\in \FF_q$ and some $N\in\ZZ$} \right\}
$$
and put
$$
\TT=\{\alpha\in K_\infty: |\alpha|<1\}=\left\{\sum_{i\leq -1}a_it^i: \mbox{for $a_i\in \FF_q$}
\right\}.
$$
Since $\TT$ is a locally compact
additive subgroup of $K_\infty$ it possesses a unique Haar measure $\d
\alpha$, which is normalised so that
$
\int_\TT \d\alpha=1.
$
We can extend $\d\alpha$ to a (unique) translation-invariant measure on $\Ki$ in
such a way that
$$
\int_{\{\alpha\in\Ki :
|\alpha|< q^N
\}} \d \alpha= q^N,
$$
for any $N\in \ZZ$.
These measures also extend to $\TT^n$ and $\Ki^n$, for any $n\in \NN$.

Given $\alpha \in K_\infty$ we denote $\| \alpha  \| = |\{ \alpha \}|$, where 
$\{\alpha\} \in \TT$ is the {\em fractional part} of $\alpha$. The following result will prove useful.

\begin{lemma}\label{lem:measure}
For any $N\in \ZZ_{\geq 0}$ and any non-zero $h\in \FF_q[t]$, we
have
$$
\meas \{ \beta \in \TT: \| h \beta  \| < q^{-N} \} =  q^{- N}.
$$
\end{lemma}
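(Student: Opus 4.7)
The plan is to perform the change of variables $\gamma = h\beta$, reducing the assertion to a direct integration on $K_\infty$. Set $D = \deg h$, so that $|h| = q^D$.

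The first key observation is that multiplication by $h$ gives a bijection from $\TT$ onto the open ball $\{\gamma \in K_\infty : |\gamma| < q^D\}$: injectivity is clear since $K_\infty$ is a field, and surjectivity follows because $\gamma/h \in K_\infty$ with $|\gamma/h| = |\gamma|/q^D < 1$ exactly when $|\gamma| < q^D$. Since the Haar measure on $K_\infty$ obeys the standard scaling rule $d(h\beta) = |h|\,d\beta = q^D\,d\beta$, this yields
$$
\meas\{\beta \in \TT : \|h\beta\| < q^{-N}\} = q^{-D}\,\meas\{\gamma \in K_\infty : |\gamma| < q^D,\ \|\gamma\| < q^{-N}\}.
$$

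To evaluate the right-hand measure, I would decompose each such $\gamma$ uniquely as $\gamma = p + r$ with $p \in \FF_q[t]$ of degree strictly less than $D$ and $r \in \TT$; by definition of the fractional part we then have $\|\gamma\| = |r|$, so the condition $\|\gamma\| < q^{-N}$ pins $r$ down to the ball $\{r \in \TT : |r| < q^{-N}\}$, which has measure $q^{-N}$ by the normalisation of $d\alpha$ recalled just above the lemma. Summing over the $q^D$ admissible polynomials $p$ produces $q^D \cdot q^{-N} = q^{D-N}$, and dividing by $q^D$ delivers the claimed value $q^{-N}$.

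There is no substantive obstacle here; the argument is essentially careful bookkeeping of the Haar measure on the local field $K_\infty$. The only points that warrant attention are verifying that $\beta \mapsto h\beta$ maps $\TT$ onto the full ball of radius $q^D$ (which uses the ultrametric property and the fact that $K_\infty$ is a field) and correctly applying the scaling factor $|h| = q^D$ when transporting the measure.
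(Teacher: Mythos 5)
Your proof is correct, but it follows a genuinely different route from the paper. The paper works entirely at the level of coefficients: it writes $h=c_st^s+\cdots+c_0$, expands $\{h\beta\}$ in powers of $t^{-1}$, and observes that the condition $\|h\beta\|<q^{-N}$ is a system of $N$ linear equations in the first $s+N$ coefficients $(\beta_{-1},\dots,\beta_{-s-N})$ whose (banded, triangular-with-$c_s$-corner) matrix has rank exactly $N$; counting solutions gives $q^s$ admissible coefficient vectors and hence measure $q^s\cdot q^{-s-N}=q^{-N}$. You instead transport the problem through the map $\beta\mapsto h\beta$, using that this map sends $\TT$ bijectively onto the ball $\{|\gamma|<q^{|h|_{\deg}}\}$ and scales Haar measure by $|h|$, and then split $\gamma$ into its polynomial and fractional parts. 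Your argument is shorter and more conceptual, at the price of invoking the modulus property $\d(h\beta)=|h|\,\d\beta$; this is standard for local fields and in fact follows immediately from the paper's normalisation (multiplication by $h$ carries the ball $\{|\alpha|<q^M\}$, of measure $q^M$, onto the ball $\{|\alpha|<q^{M+\deg h}\}$, of measure $q^{M+\deg h}$), so no gap results, though stating this verification explicitly would make the step airtight. The paper's coefficient computation, by contrast, is completely self-contained and uses nothing beyond the measure of balls in $\TT$, which is presumably why the authors chose it; both arguments yield the exact equality, not merely an upper bound.
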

\begin{proof}
We may suppose that 
 $h = c_s t^s + \cdots + c_0 \in \FF_q[t]$, with $c_s \neq 0$ and $s \geq 0$.
Note that
$$
\{ h \beta \} = \sum_{i \leq -1} (c_0 \beta_{i} + c_{1} \beta_{- 1 + i} + \cdots + c_s \beta_{- s + i}   ) t^{i},
$$
where we write  $\beta=\sum_{i \leq -1} \beta_{i} t^i$. Then  the restriction 
$$
 \| h \beta  \| = |\{ h \beta \}|  < q^{-N}
$$
is equivalent to $(\beta_{- 1}, \ldots, \beta_{- s - N})$ satisfying
$$
\begin{pmatrix}
c_0 & c_{1}&  \cdots  & \cdots  & c_s & 0  & \cdots  &  0 \\
  0 & c_{0}& c_{1} & \cdots & \cdots  &  c_s  & \cdots & 0 \\
 \vdots & \vdots & \ddots &  \ddots & \ddots&  \ddots & \ddots & \vdots \\
  0 & 0& \cdots & c_0 & c_{1} & \cdots & \cdots&  c_s 
\end{pmatrix}
\begin{pmatrix}
\beta_{-1} \\
 \beta_{ -2} \\
 \vdots \\
 \beta_{-s  - N} \\
\end{pmatrix}
=
\mathbf{0}.
$$
Since $c_s \neq 0$, the matrix on the left hand side has rank $N$. 
Therefore, there are precisely $q^{ s }$ choices for $(\beta_{- 1}, \ldots, \beta_{- s - N})$. 
It follows that 
$$
\meas \{ \beta \in \TT: \| h \beta  \| < q^{-N} \} = q^{s} \meas \{   \gamma \in \TT: |  \gamma  | < q^{-s - N} \} = q^{-N},
$$
as required.
\end{proof}

\subsection{Characters}\label{s:add-characters}
There is a non-trivial additive character $e_q:\FF_q\rightarrow \CC^*$ defined
for each $a\in \FF_q$ by taking
$e_q(a)=\exp(2\pi i \tr(a)/p)$, where $\tr: \FF_q\rightarrow \FF_p$ denotes the
trace map.
This character induces a non-trivial (unitary) additive character $\psi:
K_\infty\rightarrow \CC^*$ by defining $\psi(\alpha)=e_q(a_{-1})$ for any
$\alpha=\sum_{i\leq N}a_i t^i$ in $\Ki$.
We have the  basic orthogonality property
$$
\sum_{\substack{b\in \FF_q[t]\\ |b|< q^N}}\psi(\gamma b)=\begin{cases}
q^N & \mbox{if $|\gamma|< q^{-N}$,}\\
0 & \mbox{otherwise},
\end{cases}
$$
for any $\gamma \in \TT$ and any integer $N\geq 0$, as proved in  \cite[Lemma 7]{kubota}.
We also have
\begin{equation}\label{eq:ortho}
\int_{\{\alpha\in K_\infty: |\alpha|<q^N\}} \psi(\alpha \gamma) \d \alpha =\begin{cases}
q^N &\text{ if $|\gamma|<q^{-N}$,}\\
0 &\text{ otherwise,}
\end{cases}
\end{equation}
for any $\gamma\in K_\infty$ and $N\in \ZZ$, as proved in \cite[Lemma 1(f)]{kubota}.

\subsection{Covering a box by boxes}
\label{boxes}
Given $\z \in K_{\infty}^R$ and $N \in \ZZ$, we denote
$$
B_{N}(\z) = \{ \bal \in K_{\infty}^R:  | \bal - \z | < q^{N} \}.
$$
Let $M \in \ZZ$ with $M \leq  N$. We claim that we can always
 cover $B_{{N}}(\z)$ by
 at most $q^{R (N  - M)}$
boxes of the form $B_{{M}}(\y)$.  If $M=N$ there is nothing to prove.
Suppose $M<N$ and assume without loss of generality that $\z = \mathbf{0}$. Then
we have
$$
B_{ {N}}(\mathbf{0}) =
\bigcup_{1\leq i\leq R}
\bigcup_{ \substack{  a^{(i)}_{M}, \ldots, a^{(i)}_{N-1} \in \FF_q  } }
B_{ {M}} \left( \sum_{j = M}^{N-1} a^{(1)}_j t^j, \ldots, \sum_{j = M}^{N-1} a^{(R)}_j t^j  \right),
$$
and the number of boxes used in this cover
is precisely $q^{R(N-M)}.$

\subsection{An auxiliary estimate}
\label{aux}

Let $f_1,\dots,f_R\in \FF_q[x_1,\dots,x_n]$ be forms of degree $d$ defined over a finite field $\FF_q$.
We will  assume that $\chara(\FF_q)>d$ throughout this section. Consider the affine variety
\begin{equation}\label{eq:Jak}
V=\left\{\x\in \AA^n: \rank \left(\frac{\partial f_k}{\partial x_i} \right)_{\substack{1\leq k\leq R\\1\leq i\leq n }}<R\right\}.
\end{equation}
Note that $V$ cuts out  the {\em Birch singular locus}  occuring in the work of Birch \cite{birch}.
As shown by Lee \cite{lee}, the function field version of Birch's work allows one to count $\FF_q(t)$-points of bounded height in  the system $f_1=\dots =f_R=0$, provided that
$
n-\dim V>R(R+1)2^{d-1}(d-1).
$

\begin{remark}\label{rem:sigma}
Let $V$ be given by \eqref{eq:Jak} and
let  $\sigma=\dim V$. If we assume that
the system $f_1,\dots,f_R$ cuts out a smooth complete intersection $X$ in $\PP^{n-1}$, then
we claim that $\sigma\leq R-1$.
This follows from the argument in
\cite[Lemma~3.1]{BHB} for  {\em optimal} systems of forms, this being an argument that is valid over fields of arbitrary characteristic. In characteristic $0$, it is shown in
\cite[Lemma~3.1]{BDHB} that $X$ can be defined by an equivalent optimal system
of forms, but use is made of Bertini's theorem, which is not generally available in positive characteristic.  Fortunately, the same conclusion can be arrived at in positive
characteristic, as explained by Glas in his proof of  \cite[Lemma~3.1]{glas}.
\end{remark}

Let
\begin{equation}\label{eq:Vf}
V_{\h.\f} = \{ \x \in \AA^n: h_1\nabla f_1 (\x)+\dots +h_R\nabla f_R (\x) = \mathbf{0} \},
\end{equation}
for any $\h\in \FF_q[t]^n$.
Then it will be convenient to  define
\begin{equation}\label{1.10}
\sigma_{\f} = \max_{\substack{\h \in \FF_q[t]^R\\
\h\neq \0} } \dim V_{\h. \mathbf{f}}.
\end{equation}

\begin{remark}\label{rem:sigma-f}
Note that $V_{\h. \mathbf{f}}$ is contained in the variety $V$ defined in \eqref{eq:Jak}. Hence, if the system $f_1,\dots,f_R$ cuts out a smooth complete intersection in $\PP^{n-1}$, then
Remark~\ref{rem:sigma} implies that $\sigma_\f\leq \sigma\leq R-1$.
\end{remark}

Suppose that
$$
f_k(\x)
=\sum_{i_1,\dots,i_d=1}^n c_{i_1,\dots,i_d}^{(k)}
x_{i_1}\dots x_{i_d},
$$
for $1\leq k\leq R$,
with coefficients $c_{i_1,\dots,i_d}^{(k)} \in \FF_q$ which are symmetric  in the indices.
Associated to each $f_k$ are the multilinear forms
\begin{equation}\label{eq:multi}
\Psi_i^{(k)}(\x^{(1)},\dots,\x^{({d-1})})
=
d!
\sum_{i_1,\dots,i_{d-1}=1}^n c_{i_1,\dots,i_{d-1},i}^{(k)}
x_{i_1}^{(1)}\dots x_{i_{d-1}}^{(d-1)},
\end{equation}
for $1\leq i\leq n$.
Given $J\in \NN$, $\bbe=(\beta^{(1)},\dots,\beta^{(R)})\in K_\infty^R$,
we shall be  interested in the size of the counting function
\begin{equation}\label{eq:rabbit}
N^{\textnormal{aux}}(J;  \bbe) =\#\left\{\underline\uu\in \FF_q[t]^{(d-1)n}:
\begin{array}{l}
|\uu^{(1)}|,\dots,|\uu^{(d-1)}|< q^J\\
\left|
\sum_{k=1}^{R} \beta^{(k)} \Psi_i^{(k)}(\underline\uu)
\right|<  q^{(d-2) J}\\
\text{for  $1 \leq i \leq n$}
\end{array}
\right\},
\end{equation}
where
$\underline\uu=(\uu^{(1)},\dots,\uu^{(d-1)})$.
(This is a function field variant of the quantity considered in \cite[Def.~1.1]{simon},
in which  the upper bound $|\bbe| q^{(d-2)J}$ is replaced by
$q^{(d-2)J}$.)
The trivial bound for this quantity is
$$
N^{\textnormal{aux}}(J;  \bbe)\leq q^{Jn(d-1)}.
$$
Our main result in this section offers the following improvement for suitable  $\bbe\in K_\infty^R$.

\begin{theorem}\label{t:Nbf}
Let $J\in \NN$, $M\in \ZZ$ and let   $\sigma=\dim V$, where  $V$ is the variety  cut out by
\eqref{eq:Jak}. Let $\bbe\in K_\infty^R$ such  that $|\bbe|= q^M$. Assume that
$M\geq d-2$.
Then
$$
N^{\textnormal{aux}}(J;  \bbe) \leq
(d-1)^{\min\{\frac{J+M}{d-1}, J\}n+n}  q^{Jn(d-1) -
\min\{\frac{J+M}{d-1}, J\}
(n-\sigma)}.
$$
\end{theorem}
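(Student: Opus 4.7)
The strategy adapts the iterated geometry-of-numbers argument of Rydin Myerson \cite{simon, simon'} to the function field setting. Set $h := \min\bigl(\lfloor (J+M)/(d-1)\rfloor, J\bigr)$. The proof has three main ingredients: a function-field Davenport-type shrinking lemma; the symmetric-multilinear structure of $\Psi$, which lets the shrinking be applied to each of the $d-1$ variables; and a reduction to counting $\FF_q$-points on a variety related to \eqref{eq:Vf} via Lemma \ref{LWbdd}.

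For fixed $\uu^{(2)}, \ldots, \uu^{(d-1)}$, the expression $L_i(\uu^{(1)}) := \sum_k \beta^{(k)} \Psi_i^{(k)}(\underline\uu)$ is a system of $n$ linear forms in $\uu^{(1)}$ with matrix $T = T(\uu^{(2)}, \ldots, \uu^{(d-1)}) \in M_n(K_\infty)$. Using the orthogonality identity \eqref{eq:ortho}, Lemma \ref{lem:measure}, and the subtle fact that $\FF_q[t]$-orthogonality detects the fractional part $\|\cdot\|$ rather than $|\cdot|$, I would establish a function-field Davenport shrinking inequality: for any integer $Z$ with $0 \leq Z \leq J$,
$$
\#\{\uu : |\uu| < q^J,\, |T\uu|_i < q^{(d-2)J}\} \leq (d-1)^{n}\, q^{nZ}\,\#\{\uu : |\uu| < q^{J-Z},\, |T\uu|_i < q^{(d-2)J-Z}\}.
$$
By the symmetric multilinearity of $\Psi$, the same shrinking applies in each $\uu^{(j)}$. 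Iterating so that the cumulative shrinking parameter is $h$ yields
$$
N^{\textnormal{aux}}(J; \bbe) \leq (d-1)^{hn + n}\, q^{n(d-1)h}\, N^{\text{small}},
$$
where $N^{\text{small}}$ counts $\underline\uu$ with $|\uu^{(j)}| < q^{J-h}$ and $|L_i(\underline\uu)| < q^{(d-2)J - (d-1)h}$. The choice of $h$ is calibrated: further shrinking would either exhaust the box ($h=J$) or trivialise the residual constraint.

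To bound $N^{\text{small}}$, I would apply orthogonality to the $n$ residual small-value constraints, introducing a dual vector $\bga \in K_\infty^n$. The assumption $M \geq d-2$ ensures that, after rescaling $\bga$, the inner exponential sum over $\underline\uu$ detects the vanishing $\sum_k h_k \Psi_i^{(k)}(\underline\uu) = 0$ for a nonzero $\h \in \FF_q[t]^R$ constructed from $\bga$. Using the identity $\Psi_i^{(k)}(\x,\ldots,\x) = (d-1)!\,\partial_i f_k(\x)$, valid since $\chara(\FF_q) > d$, and Remark \ref{rem:sigma-f}, this residual variety has dimension controlled by $\sigma$. Applying Lemma \ref{LWbdd} and carefully tracking the height scaling gives $N^{\text{small}} = O\bigl(q^{n(d-1)(J-h) - h(n-\sigma)}\bigr)$, which combined with the shrinking bound produces the stated inequality. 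The main obstacle is the bookkeeping across the iterated shrinking—in particular, showing that the $(d-1)^n$ loss from each of the $h$ effective shrinking steps combines into the stated $(d-1)^{hn+n}$ prefactor—together with the dimension count for the residual variety, where the hypothesis $M \geq d-2$ is crucial for recognising the constraint as an instance of the Birch singular condition.
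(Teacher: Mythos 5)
There is a genuine gap, and it sits at the heart of your plan: the claimed bound $N^{\text{small}} = O\bigl(q^{n(d-1)(J-h) - h(n-\sigma)}\bigr)$ cannot hold. Since the zero tuple always satisfies the residual constraints, $N^{\text{small}}\geq 1$; but when $M$ is close to or exceeds $(d-2)J$ your $h$ is close to $J$ and the claimed exponent $n(d-1)(J-h)-h(n-\sigma)$ is negative (at $h=J$ the shrunken boxes contain only $\underline\uu=\mathbf{0}$, so $N^{\text{small}}=1$ exactly, while you are asking for $O(q^{-J(n-\sigma)})$). Consequently your assembled estimate degenerates to the trivial bound $q^{Jn(d-1)}$ in precisely the range where Theorem \ref{t:Nbf} claims the full saving $q^{-J(n-\sigma)}$. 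The conceptual reason is that shrinking discards the information that produces the saving: the gain of $n-\sigma$ \emph{per level} $h$ is not a property of a smaller box, and no dichotomy of the type ``orthogonality produces a nonzero $\h\in\FF_q[t]^R$ with $\underline\uu$ on $V_{\h.\f}$'' (which is the rank-deficient branch later used in Lemma \ref{lem2.5}) yields a graded saving proportional to $\min\{\frac{J+M}{d-1},J\}$. (Your step 1 is fine, and in fact easier than you suggest: for neighbourhoods of $0$ the set $\{\uu: |\uu|<q^J,\ |T\uu|<q^c\}$ is a compact open subgroup of $K_\infty^n$, so the shrinking inequality follows from a coset-covering argument with no symmetry hypothesis; but it is not where the theorem's strength lies.)

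The paper's proof is of a different nature and uses crucially that the coefficients of $f_1,\dots,f_R$ lie in the constant field $\FF_q$. After normalising $|\bbe|=1$, it expands each $u^{(j)}_i$ and each $\beta^{(k)}$ in powers of $t$, so that the conditions $\bigl|\sum_k\beta^{(k)}\Psi_i^{(k)}(\underline\uu)\bigr|<q^{(d-2)J-M^*}$ become the vanishing over $\FF_q$ of the coefficients of $t^{(d-2)J-M^*+v}$ for $0\leq v\leq J+M^*-(d-1)$: a system of $n(J+M^*-d+2)$ polynomial equations in the $Jn(d-1)$ coefficient variables $z^{(j)}_{i,\ell}$. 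Selecting the equations with $v=J+M^*-h(d-1)$ (and one extra half-step when $d-1\nmid J+M^*$) cuts out a variety $Z$, whose dimension is bounded by an iterated fibering argument: projections onto the blocks $\z^{(\cdot)}_{J-1},\z^{(\cdot)}_{J-2},\dots$, with $\dim Y_{J-1}\leq n(d-2)+\sigma$ obtained by Birch's diagonal trick (reducing to the locus \eqref{eq:Jak}), and the extra block handled by the Hessian-rank argument of \cite{BHB-4}; one concludes with B\'ezout and Lemma \ref{LWbdd}. If you want to salvage your approach you would have to replace the bound on $N^{\text{small}}$ by an argument of this kind applied to the original (unshrunk) variables — at which point the shrinking step becomes superfluous.
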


\begin{proof}
Let $M^*\geq 0$ and
define
$$
N^{\textnormal{aux}}(J,M^*;  \bbe) =\#\left\{\underline\uu\in \FF_q[t]^{(d-1)n}:
\begin{array}{l}
|\uu^{(1)}|,\dots,|\uu^{(d-1)}|< q^J\\
\left|
\sum_{k=1}^{R} \beta^{(k)} \Psi_i^{(k)}(\underline\uu)
\right|<  q^{(d-2) J-M^*}\\
\text{for  $1 \leq i \leq n$}
\end{array}
\right\},
$$
for any $\bbe\in K_\infty^R$ such that $|\bbe|=1$.
Assuming that $M^*$ lies in the range
\begin{equation}\label{eq:M*-range}
d-2\leq M^*\leq
J(d-2),
\end{equation}
we shall prove that
\begin{equation}\label{eq:mu2}
N^{\textnormal{aux}}(J,M^*;  \bbe)\leq
(d-1)^{\frac{(J+M^*)n}{d-1}+n}  q^{Jn(d-1) - \frac{(J+M^*)(n-\sigma)}{d-1}}.
\end{equation}
 This will suffice for the theorem, since
$$
N^{\textnormal{aux}}(J;  \bbe)\leq  N^{\textnormal{aux}}(J ,
\min\{M,
J(d-2)\}
;  \bbe t^{-M}),
$$
on recalling that  $|\bbe|=q^M$.

Assume henceforth that  $|\bbe|=1$ and
let
$$
\beta^{(k)}=\sum_{r\geq 0} b_r^{(k)}t^{-r},
$$
for $1\leq k\leq R$ and $b_r^{(k)}\in \FF_q$. We may assume without loss of generality that $b_0^{(1)}\neq 0$, since $|\bbe|=1$.
Since $|\uu^{(j)}|<  q^J$, for $1\leq j\leq d-1$, each component of $\uu^{(j)}$ can be written
$$
u_{i}^{(j)}=\sum_{\ell=0}^{J-1} z_{i,\ell}^{(j)} t^\ell,
$$
for $1\leq i\leq n$ and $z_{i,\ell}^{(j)}\in \FF_q$. Expanding everything out, it therefore follows that
$\sum_{k=1}^{R} \beta^{(k)}\Psi_i^{(k)}(\underline\uu)$ is equal to
$$
d! \sum_{k=1}^R
\sum_{r\geq 0} b_r^{(k)} t^{-r}
\sum_{i_1,\dots, i_{d-1}=1}^n c_{i_1,\dots,i_{d-1},i}^{(k)} \sum_{\ell_1,\dots,\ell_{d-1}=0}^{J-1}
z_{i_1,\ell_1}^{(1)}\dots
z_{i_{d-1},\ell_{d-1}}^{(d-1)}
 t^{\ell_1+\dots+\ell_{d-1}},
$$
for $1\leq i\leq n$. The components  $z_{i,\ell}^{(j)}$ are therefore counted by $N^{\textnormal{aux}}(J,M^*;  \bbe)$ if and only
if
the coefficient of $t^{(d-2)J-M^* + v}$ vanishes in this expression for all
$$
0\leq v \leq J+M^*-(d-1).
$$
Note that this interval is non-empty, since $J\geq 1$ and $M^*\geq d-2$, by the lower bound
in \eqref{eq:M*-range}, whence $J+M^*\geq d-1$.

By collecting together  the coefficients of $t^{(d-2)J  -M^*+ v}$,
for $0\leq v \leq J+M^*-(d-1)$,
we see  that  $N^{\textnormal{aux}}(J;  \bbe)$ is equal to the number of elements
$z_{i,\ell}^{(j)}\in \FF_q$, for $1\leq i\leq n$, $0\leq \ell\leq J-1$ and $1\leq j\leq d-1$, such that
$$
\sum_{k=1}^R
\sum_{i_1,\dots, i_{d-1}=1}^n
\hspace{-0.4cm}
c_{i_1,\dots,i_{d-1},i}^{(k)}
\hspace{-1cm}
\sum_{\substack{\ell_1,\dots,\ell_{d-1}=0\\
\ell_1+\dots+\ell_{d-1}\geq v +(d-2)J-M^*
}}^{J-1}
\hspace{-1cm}
b_{\ell_1+\dots+ \ell_{d-1}-(d-2)J-v+M^*}^{(k)}
z_{i_1,\ell_1}^{(1)}\dots
z_{i_{d-1},\ell_{d-1}}^{(d-1)}=0,
$$
for $1\leq i\leq n$ and
$0\leq v \leq J+M^*-(d-1).$
In total  we have $Jn(d-1)$ variables and $n(J+M^*-d+2)$ equations. We let $\widetilde{Z} \subset \AA^{J(d-1)n}$ be the algebraic variety cut out by this system.
For each choice of $v$, we let $X_v \subset \AA^{J(d-1)n}$ denote the algebraic variety defined by the system of $n$ equations associated to it.

Let $g=\lfloor (J+M^*)/(d-1)\rfloor\geq 1$.
We are unable to extract anything useful from many of the equations, but we will be able to exploit information when $v = J+M^* - h(d-1)$ for integers $1\leq h\leq g$, and also when
$v = J+M^* - g(d-1)-1$, if
$(J+M^*)/(d-1)\not\in \ZZ$.
For this purpose we define
$$
Z =X_{J+M^*-(d-1)} \cap  X_{J+M^*-2(d-1)}\cap \dots \cap X_{J+M^*-g(d-1)}\cap X_{J+M^*-g(d-1)-\epsilon_J},
$$
where
$$
\epsilon_J=
\begin{cases}
0 & \text{ if $d-1\mid J+M^*$,}\\
1 & \text{ otherwise.}
\end{cases}
$$
In order to establish \eqref{eq:mu2}, it will suffice to prove that 
\begin{equation}\label{eq:dimZ}
\dim Z \leq
Jn(d-1)   -(g+\epsilon_J)(n-\sigma).
\end{equation}
Note that our assumption \eqref{eq:M*-range} on $M^*$ implies that
$$
(g+\epsilon_J)(n-\sigma)\leq \left(\frac{J+M^*}{d-1}+1-\frac{1}{d-1}\right)(n-\sigma)
\leq Jn(d-1),
$$
so that the right hand side of \eqref{eq:dimZ} is indeed non-negative.

It follows from B\'ezout's theorem, in the form \cite[Example~8.4.7]{fulton}, that the degree of $Z$ satisfies
$
\deg Z \leq (d-1)^{(g+\epsilon_J)n}.
$
Noting that $\widetilde{Z} \subset Z$, \eqref{eq:mu2}
follows from
an application of Lemma \ref{LWbdd} and the observation that
$$
 \frac{J+M^*}{d-1}\leq
g+\epsilon_J\leq \frac{J+M^*}{d-1}+1.
$$
It therefore  remains to prove \eqref{eq:dimZ}.

\subsection*{The case $g=1$.}
In the definition of  $X_{J+M^*-(d-1)}$, only
the indices $\ell_1=\dots=\ell_{d-1}=J-1$ are possible. It follows that
$X_{J+M^*-(d-1)}$
is given by the system of  equations
$$
G_i(\z_{J-1}^{(1)}, \dots, \z_{J-1}^{(d-1)})=0,
$$
for $1\leq i\leq n$, where
\begin{equation}\label{eq:Gi}
G_i(\z_{J-1}^{(1)}, \dots, \z_{J-1}^{(d-1)})=
\sum_{k=1}^R
b_{0}^{(k)}
\sum_{i_1,\dots, i_{d-1}=1}^n c_{i_1,\dots,i_{d-1},i}^{(k)}
z_{i_1,J-1}^{(1)}\dots
z_{i_{d-1},J-1}^{(d-1)}.
\end{equation}
If $\ve_J=1$ then,
in the definition of
$X_{J+M^*-(d-1)-1}$, we see that either
$\ell_1=\dots=\ell_{d-1}=J-1$, or else
 precisely one of $\ell_1,\dots, \ell_{d-1}$ is equal to $J-2$, with all the others equal to $J-1$.
It follows that
$X_{J+M^*-(d-1)-1}$
is given by the system of  equations
$$
A_i(\z_{J-1}^{(1)}, \dots, \z_{J-1}^{(d-1)})+
B_i(\z_{J-2}^{(1)}, \dots, \z_{J-2}^{(d-1)}, \z_{J-1}^{(1)}, \dots, \z_{J-1}^{(d-1)})
=0,
$$
for $1\leq i\leq n$, where
\begin{equation}\label{eq:Hi}
A_i(\z_{J-1}^{(1)}, \dots, \z_{J-1}^{(d-1)})=
\sum_{k=1}^R
b_{1}^{(k)}
\sum_{i_1,\dots, i_{d-1}=1}^n c_{i_1,\dots,i_{d-1},i}^{(k)}
z_{i_1,J-1}^{(1)}\dots
z_{i_{d-1},J-1}^{(d-1)}
\end{equation}
and $B_i(\z_{J-2}^{(1)}, \dots, \z_{J-2}^{(d-1)}, \z_{J-1}^{(1)}, \dots, \z_{J-1}^{(d-1)})$ is given by
\begin{equation}\label{eq:Ki}
\sum_{m=1}^{d-1}
\sum_{k=1}^R
\sum_{i_1,\dots, i_{d-1}=1}^n
\hspace{-0.3cm}
c_{i_1,\dots,i_{d-1},i}^{(k)}
b_{0}^{(k)}
z_{i_1,J-1}^{(1)}\dots
z_{i_{m-1},J-1}^{(m-1)}
z_{i_m,J-2}^{(m)}
z_{i_{m+1},J-1}^{(m+1)}
\dots
z_{i_{d-1},J-1}^{(d-1)}.
\end{equation}

Let  $Y_{J-1} \subset \AA^{n(d-1)}$ be the variety that is obtained by projecting
$X_{J+M^*-(d-1)}$ to the $(\z_{J-1}^{(1)}, \dots, \z_{J-1}^{(d-1)})$ variables,
being defined by
$
G_i(\z_{J-1}^{(1)}, \dots, \z_{J-1}^{(d-1)})=0
$, for $1\leq i\leq n$.
We shall prove \eqref{eq:dimZ}, for $g=1$,  by projecting to $Y_{J-1}$, which we denote by $\pi_0: Z \to Y_{J-1}$, and considering the dimension of the fibres.
The particular version of the fibre dimension theorem we use is the Corollary to Theorem 2 in Mumford \cite[\S I.8]{mum}.
Thus
\begin{equation}\label{eq:dimZ-1}
\dim Z \leq \dim  \overline{\pi_0(Z)} + \Delta_0  \leq \dim Y_{J-1} +\Delta_0,
\end{equation}
where $\Delta_0$ is the dimension of the fibre above any chosen point in the image of $\pi_0$.
We claim that
\begin{equation}\label{eq:dimZ-2}
\dim Y_{J-1} \leq n(d-2)+\sigma.
\end{equation}
To see this, we follow Birch \cite[Lemma~3.3]{birch} and proceed by intersecting the system of equations defining $Y_{J-1}$ with the diagonal
$$
D=\{ (\z_{J-1}^{(1)},  \dots,  \z_{J-1}^{(d-1)}) \in \AA^{n(d-1)} : \z_{J-1}^{(1)}= \dots = \z_{J-1}^{(d-1)}\}.
$$
This produces an algebraic variety cut out by the system
$$
\sum_{k=1}^R
b_{0}^{(k)}  \frac{\partial f_k (\z)}{\partial z_i}=0,
$$
for $1\leq i\leq n$. Since $b_0^{(1)}\neq 0$ these relations imply that $\z$ must satisfy \eqref{eq:Jak}. Hence
the affine dimension theorem implies that
$$
\sigma=\dim V\geq  \dim (Y_{J-1} \cap D) \geq \dim Y_{J-1} +\dim D-n(d-1).
$$
The diagonal has dimension $n$ and so it follows that \eqref{eq:dimZ-2} holds, as claimed.

It remains to bound  $\Delta_0$.
Suppose first that $\ve_J=0$. Then we define the variety
$$
T_{J-1}= \{ (\z_{0}^{(1)},\dots,
\z_{J-1}^{(d-1)})  \in \AA^{Jn(d-1)}   :   \z_{J-1}^{(1)}=\dots = \z_{J-1}^{(d-1)} = \mathbf{0} \}.
$$
It follows that
$$
\Delta_0 \leq \dim (Z \cap T_{J-1})\leq  \dim T_{J-1}\leq (J-1)n(d-1).
$$
In this case, the desired bound \eqref{eq:dimZ} follows immediately
from
\eqref{eq:dimZ-1} and \eqref{eq:dimZ-2}.

Suppose next that $\ve_J=1$.
Fix a  point
$\w\in \AA^n$,
to be determined in due course.
Then we define the variety
$$
S_{J-1}(\w)= \{ (\z_{0}^{(1)},\dots,
\z_{J-1}^{(d-1)})  \in \AA^{Jn(d-1)}   :   \z_{J-1}^{(1)}=\dots = \z_{J-1}^{(d-2)} =\w, ~\z_{J-1}^{(d-1)}= \mathbf{0} \}.
$$
Note that
$G_i(\w,\dots,\w,\0)=
A_i(\w,\dots,\w,\0)=0$,  for $1\leq i\leq n$. It follows that
$$
\Delta_0 \leq \dim (Z \cap S_{J-1}(\w))=
\dim (X_{J+M^*-(d-1)-1} \cap S_{J-1}(\w)),
$$
and from  \eqref{eq:Hi} and \eqref{eq:Ki} that
$X_{J+M^*-(d-1)-1} \cap S_{J-1}(\w)$ is cut out by the system of equations
$$
\sum_{k=1}^R
\sum_{i_1,\dots, i_{d-1}=1}^n
c_{i_1,\dots,i_{d-1},i}^{(k)}
b_{0}^{(k)}
w_{i_1}\dots
w_{i_{d-2}}
z_{i_{d-1},J-2}^{(d-1)}=0,
$$
for $1\leq i \leq n$. But this is just the system of equations
$$
H_{\mathbf{b}.\f}(\w)
  \z_{J-2}^{(d-1)}=\mathbf 0,
$$
where
$ \mathbf{b}.\f=b_0^{(1)}f_1+\dots +
b_0^{(R)}f_R$ and
$H_f$ is the Hessian matrix of second derivatives associated to a polynomial $f\in \FF_q[x_1,\dots,x_n]$.
Suppose that $H_{\mathbf{b}.\f}(\w)$ has rank $\rho$.
Then, on returning to
 $X_{J+M^*-(d-1)-1} \cap S_{J-1}(\w)$, we see that there are no constraints on the vectors
$\z_{0}^{(1)},\dots, \z_{J-2}^{(d-2)}$, but $\z_{J-2}^{(d-1)}$ is constrained to lie in a linear space of dimension $n-\rho$.
Thus
$$
\Delta_0\leq (J-1)n(d-1)-\rho.
$$
We now want to choose $\w$ to make $\rho$ as large as possible.
Let $T_\rho$ be the variety of $\w\in \AA^n$ such that
$\rank H_{\mathbf{b}.\f}(\w)\leq \rho$. Then it follows from
precisely the same argument in
\cite[Lemma 2]{BHB-4}, together with  Remark~\ref{rem:sigma-f},
that
$$
\dim T_\rho\leq \rho+\sigma,
$$
since  $b_0^{(1)}\neq 0$. 
Let  $\rho$ be the least non-negative integer such that $\dim T_{\rho}=n$. We then choose 
$\w\in T_{\rho}\setminus T_{\rho-1}$, and therefore deduce that
$$
\Delta_0\leq (J-1)n(d-1)-(n-\sigma).
$$
The desired bound  \eqref{eq:dimZ} now follows
from
\eqref{eq:dimZ-1} and \eqref{eq:dimZ-2}.

\subsection*{The case $g\geq 2$}

It will be convenient to define the variety
$$
T_{i}= \{ (\z_{0}^{(1)},\dots,
\z_{J-1}^{(d-1)})  \in \AA^{Jn(d-1)}   :   \z_{i}^{(1)}=\dots = \z_{i}^{(d-1)} = \mathbf{0} \},
$$
for each $i\in \{0,\dots,J-1\}$.
Let
 $Y_{J-2}\subset \AA^{n(d-1)}$ be the variety obtained by projecting
$X_{J+M^* -2(d-1)}\cap T_{J-1}$ to the
$(\z_{J-2}^{(1)}, \dots, \z_{J-2}^{(d-1)})$ variables.
Then we may  define the projection
$$
\pi_1: (Z \cap T_{J-1}) \to Y_{J-2}.
$$
Note that
$X_{J+M^* -2(d-1)}\cap T_{J-1}$
is defined by the system of  equations
$$
 \sum_{k=1}^R
\sum_{i_1,\dots, i_{d-1}=1}^n
\hspace{-0.2cm}
c_{i_1,\dots,i_{d-1},i}^{(k)}
\hspace{-0.8cm}
\sum_{\substack{\ell_1,\dots,\ell_{d-1}=0\\
\ell_1+\dots+\ell_{d-1}\geq (J-2)(d-1)
}}^{J - 2}
\hspace{-0.8cm}
b_{\ell_1+\dots+ \ell_{d-1}-(J-2)(d-1)}^{(k)}
z_{i_1,\ell_1}^{(1)}\dots
z_{i_{d-1},\ell_{d-1}}^{(d-1)}=0,
$$
for $1\leq i\leq n$. The summation conditions
  force $\ell_1=\dots=\ell_{d-1}= J-2$ in this expression,  so that we have the system
$G_{i}(\z_{J-2}^{(1)},\dots,\z_{J-2}^{(d-1)})=0$,
for $1\leq i\leq n$, in the notation of \eqref{eq:Gi}.
Hence
$Y_{J-2}$ is defined by exactly the  same set of equations that was used to define  the variety $Y_{J-1}$, from which it follows that  $\dim Y_{J-2}=\dim Y_{J-1}$.
By the same fibre dimension theorem as above, we obtain
$$
\dim (Z \cap T_{J-1}) \leq \dim Y_{J-1} + \Delta_1,
$$
where $\Delta_1$ is the dimension of the fibre above any
point in  $\pi_1 (Z \cap T_{J-1})$. Taking the point
$$
(\z_{J-2}^{(1)}, \dots, \z_{J-2}^{(d-1)}) = (\mathbf{0}, \dots, \mathbf{0}) \in \pi_1 (Z \cap T_{J-1}),
$$
we see that
$
\Delta_1 \leq  \dim  (Z  \cap T_{J-2} \cap T_{J-1}).
$
Continuing inductively in this way, we obtain the  bound
\begin{align*}
\dim Z
&\leq \dim Y_{J-1} + \dim (T_{J-1} \cap Z)\\
&\leq 2 \dim Y_{J-1} +\dim (T_{J-2} \cap T_{J-1} \cap Z)\\
&\hspace{0.2cm}\vdots\\
&\leq (g-1) \dim Y_{J-1} + \dim ( T_{{J} - (g-1) } \cap \dots \cap T_{J-1}\cap Z).
\end{align*}

Since $M^*\leq J(d-2)$, it follows that
 $J-g\geq 0$.
If $\epsilon_J=0$ then we repeat this once more to deduce that
\begin{align*}
\dim Z
&\leq
 g \dim Y_{J-1}+ \dim ( T_{J - g } \cap \dots \cap T_{J-1} \cap \AA^{nJ(d-1)}) \\
&= g \dim Y_{J-1} +(J - g )n(d-1).
\end{align*}
But then
\begin{align*}
\dim Z
&\leq (J-g ) n(d-1)+ g \left(n(d-2)+\sigma\right)= Jn(d-1)   -g(n-\sigma),
\end{align*}
by   \eqref{eq:dimZ-2}, which    is satisfactory for \eqref{eq:dimZ}.

On the other hand, if $\epsilon_J=1$ then we
deduce that
$$
\dim ( T_{{J} - (g-1) } \cap \dots \cap T_{J-1}\cap Z)\leq
\dim Y_{J-1}+ \Delta_g,
$$
where $\Delta_g$ is the
 dimension of the fibre above any
point in the image of the  projection
$$
\pi_g: Z \cap T_{J-1}\cap \dots\cap T_{J-(g-1)}\to Y_{J-g},
$$
where $Y_{J-g}\subset \AA^{n(d-1)}$ is  obtained by projecting
$X_{J+M^*-g(d-1)}\cap T_{J-1}\cap \cdots \cap T_{J-(g-1)}$ to the $(\z_{J-g}^{(1)},\dots,\z_{J-g}^{(d-1)})$ variables.
We argue as in the case $g=1$, by picking a point $\w\in \AA^n$ similarly to before and deducing that
\begin{align*}
\Delta_g&\leq 
\dim\left( Z\cap T_{J-1}\cap \cdots \cap T_{J-(g-1)}\cap S_{J-g}(\w)\right)\\
&\leq \dim \left(X_{J+M^*-g(d-1)-1}\cap T_{J-1}\cap \cdots \cap T_{J-(g-1)} \cap S_{J-g}(\w)\right),
\end{align*}
where
$$
S_{J-g}(\w)=\left\{
(\z_0^{(1)},\dots, \z_{J-1}^{(d-1)}) \in \AA^{Jn(d-1)}: 
\z_{J-g}^{(1)}=\cdots =\z_{J-g}^{(d-2)}=\w, ~\z_{J-g}^{(d-1)}=\0
\right\}.
$$
Thus  $\Delta_g\leq (J-g)n(d-1)-(n-\sigma)$, as previously, from  which 
it follows that
$$
\dim Z\leq g \dim Y_{J-1} +(J-g)n(d-1)-(n-\sigma)\leq Jn(d-1)-(g+1)(n-\sigma),
$$
by   \eqref{eq:dimZ-2}.
This too is satisfactory for \eqref{eq:dimZ}, which thereby  completes
the proof of
 Theorem
\ref{t:Nbf}.
\end{proof}

\begin{corollary}\label{c:Nbf}
Assume that
the system $f_1,\dots,f_R$ cuts out a smooth complete intersection in $\PP^{n-1}$.
Let $J\in \NN$ and let  $\bbe\in K_\infty^R$ such that
$|\bbe|\geq q^{d-1}$.
Then
$$
N^{\textnormal{aux}}(J;  \bbe) \leq (d-1)^n  q^{Jn(d-1)}\left(q^{n\left(1-\log_q(d-1)\right)-R+1}\right)^{-
\theta_d(J)},
$$
where
$$
\theta_d(J)=
\begin{cases}
\frac{J}{d-1}+1 &\text{ if $d\geq 3$ and $J\geq 2$,}\\
1 &\text{ if $d\geq 3$ and $J=1$,}\\
J &\text{ if $d=2$.}
\end{cases}
$$
\end{corollary}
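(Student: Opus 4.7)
The plan is to derive the corollary as a direct consequence of Theorem \ref{t:Nbf} together with the bound $\sigma \leq R - 1$ from Remark \ref{rem:sigma}, which applies because $f_1, \ldots, f_R$ cut out a smooth complete intersection in $\PP^{n-1}$.

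I would begin by writing $|\bbe| = q^M$ with $M \geq d - 1 \geq d - 2$, so the hypothesis of Theorem \ref{t:Nbf} is satisfied. The theorem, together with $n - \sigma \geq n - R + 1$, produces
$$N^{\textnormal{aux}}(J; \bbe) \leq (d-1)^{\tau n + n}\, q^{Jn(d-1) - \tau(n - R + 1)},$$
where $\tau = \min\{(J+M)/(d-1),\, J\}$. Rewriting $(d-1)^{\tau n} = q^{\tau n \log_q(d-1)}$ and combining exponents recasts this as
$$N^{\textnormal{aux}}(J; \bbe) \leq (d-1)^n\, q^{Jn(d-1)} \cdot q^{-\tau[n(1 - \log_q(d-1)) - R + 1]},$$
which is already essentially the shape of the claimed inequality.

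Next I would observe that $\tau$ is non-decreasing in $M$, so $\tau \geq \tau_0 := \min\{J/(d-1)+1,\, J\}$ for every admissible $\bbe$. A short case analysis identifies $\tau_0$ with $\theta_d(J)$: for $d = 2$ one has $J/(d-1) + 1 = J + 1 > J$, so $\tau_0 = J$; for $d \geq 3$ and $J = 1$ one obtains $\tau_0 = 1$; and for $d \geq 3$ with $J \geq 2$ the inequality $J/(d-1)+1 \leq J$ reduces to $J \geq (d-1)/(d-2)$, which holds. Substituting $\tau \mapsto \theta_d(J)$ in the exponent then yields the desired bound.

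The only point demanding care is the direction of this substitution: the factor $q^{-\tau[n(1 - \log_q(d-1)) - R + 1]}$ is decreasing in $\tau$ precisely when the bracketed quantity is non-negative. Thus the weakening from $\tau$ to $\theta_d(J)$ is legitimate in the regime where $n(1 - \log_q(d-1)) \geq R - 1$, which is exactly the regime in which the corollary delivers non-trivial information and which is guaranteed in the intended applications (for instance by the hypothesis $q \geq (d-1)^n$ of Theorem \ref{t:simon}, forcing $n \log_q(d-1) \leq 1$). All remaining manipulations are routine bookkeeping.
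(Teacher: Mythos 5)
Your argument is essentially identical to the paper's proof: apply Theorem \ref{t:Nbf}, insert $\sigma\leq R-1$ from Remark \ref{rem:sigma}, rewrite $d-1=q^{\log_q(d-1)}$, and use $\min\{\tfrac{J+M}{d-1},J\}\geq \min\{\tfrac{J}{d-1}+1,J\}=\theta_d(J)$ for $M\geq d-1$. The one place you deviate is in handling the sign of the exponent $n(1-\log_q(d-1))-R+1$: you are right that lowering the exponent from $\min\{\cdot\}$ to $\theta_d(J)$ is only a legitimate weakening when this quantity is non-negative, but the corollary as stated carries no hypothesis such as $q\geq (d-1)^n$, so appealing to the ``intended applications'' leaves the statement unproved in the remaining regime. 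This is easily repaired without any extra hypothesis: if $n(1-\log_q(d-1))-R+1<0$, then $\left(q^{n(1-\log_q(d-1))-R+1}\right)^{-\theta_d(J)}>1$, so the claimed bound is weaker than the trivial estimate $N^{\textnormal{aux}}(J;\bbe)\leq q^{Jn(d-1)}$ and holds automatically. With that one observation added, your proof is complete and coincides with the paper's argument (which leaves this point implicit as well).
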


\begin{proof}
Recall from Remark \ref{rem:sigma} that $\sigma\leq R-1$.
Suppose that $|\bbe|=q^M$, where $M\geq d-1$.
Theorem \ref{t:Nbf} then implies that
$$
N^{\textnormal{aux}}(J;  \bbe) \leq
(d-1)^n q^{Jn(d-1)} \left(
\frac{(d-1)^n}{q^{n-R+1}}\right)^{\min\{\frac{J+M}{d-1},
J\}}.
$$
 The statement follows on  noting that $d-1=q^{\log_q(d-1)}$ and
 \begin{align*}
\min\left\{\frac{J+M}{d-1}, J\right\}
&\geq
\min\left\{\frac{J}{d-1}+1,J\right\}=\theta_d(J),
\end{align*}
under the assumption $M\geq d-1$.
\end{proof}

\subsection{A further technical estimate}

Now that Theorem \ref{t:Nbf} is established, our final result will also prove useful.
Let $\bbe=(\beta^{(1)},\dots,\beta^{(R)})\in K_\infty^R$.
For each $0 \leq v \leq d-1$, let
\begin{equation}
N^{(v)}(J; \bbe)
\label{def Nv}
=
\# \left\{
\underline{\uu} \in \FF_q[t]^{(d-1)n}:
\begin{array}{l}
 |\uu^{(1)}|,\dots,|\uu^{(v)}|<  q^J \\
  |\uu^{(v + 1)}|,\dots,|\uu^{(d-1)}|<  q^{P} \\
\left \| \sum_{k = 1}^R \beta^{(k)} \Psi^{(k)}_{i}(\underline{\uu}) \right\| < q^{- (v + 1)P + v J} \\
\text{for all $1 \leq i \leq n$}
\end{array}
\right\},
\end{equation}
where $\underline{\uu}=(\uu^{(1)},\dots,\uu^{(d-1)})$.

\begin{lemma}
\label{last1}
Let $J,P \in \NN$ such that $J\leq P$ and let  $\bbe \in K_{\infty}^R$.
Then precisely one of the following two alternatives must happen:
\begin{enumerate}
\item[(i)] we have
$$
q^{- J} \leq    \max\left\{ q^{-d P + d - 2}  |\bbe|^{-1},   | \bbe |^{\frac{1}{d-1}}
q^{-1}
\right\};
$$
\item[(ii)] we have
$$q^{J - dP + d - 1}   \leq  |\bbe|  \leq q^{- J(d-1)+d-2}$$
and
$
N^{(d-1)}(J;  \bbe) =
N^{\textnormal{aux}}( J ;  \bbe t^{dP-J} ),
$
in the notation of \eqref{eq:rabbit}.
\end{enumerate}
\end{lemma}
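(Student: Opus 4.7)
The plan is to reduce the claimed equivalence to elementary absolute-value bookkeeping in $K_\infty$. I would first verify that the alternatives (i) and (ii) are mutually exclusive. Suppose (ii) holds. Then the upper bound $|\bbe| \leq q^{-J(d-1)+d-2}$ forces
\[
|\bbe|^{1/(d-1)} q^{-1} \leq q^{-J + (d-2)/(d-1) - 1} < q^{-J},
\]
while the lower bound $|\bbe| \geq q^{J-dP+d-1}$ forces $q^{-dP+d-2}|\bbe|^{-1} \leq q^{-J-1} < q^{-J}$, so (i) must fail. For the converse, since $|\bbe|$ takes values in $q^{\ZZ} \cup \{0\}$, the failure of (ii) means either $|\bbe| \geq q^{-J(d-1)+d-1}$ or $|\bbe| \leq q^{J-dP+d-2}$; in the former case $|\bbe|^{1/(d-1)} q^{-1} \geq q^{-J}$, and in the latter $q^{-dP+d-2}|\bbe|^{-1} \geq q^{-J}$, either of which gives (i).

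For the counting identity under hypothesis (ii), the central observation is that the $\|\cdot\|$ condition in \eqref{def Nv} with $v=d-1$ collapses to an absolute-value condition whenever the argument lies in $\TT$. If $\underline\uu$ satisfies $|\uu^{(j)}| < q^J$ for $1\leq j\leq d-1$, then each monomial in the expansion \eqref{eq:multi} of $\Psi_i^{(k)}(\underline\uu)$ has absolute value at most $q^{(d-1)(J-1)}$, with the prefactor $d!$ of absolute value $1$ by virtue of $\chara \FF_q > d$. The ultrametric inequality combined with the upper bound from (ii) then gives
\[
\left| \sum_{k=1}^R \beta^{(k)} \Psi_i^{(k)}(\underline\uu) \right| \leq |\bbe|\, q^{(d-1)(J-1)} \leq q^{-1} < 1,
\]
so the expression lies in $\TT$ and its norm $\|\cdot\|$ coincides with its absolute value.

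Once this is in hand, the inequality $\|\sum_k \beta^{(k)} \Psi_i^{(k)}(\underline\uu)\| < q^{-dP+(d-1)J}$ in \eqref{def Nv} is equivalent to $|\sum_k \beta^{(k)} \Psi_i^{(k)}(\underline\uu)| < q^{(d-1)J - dP}$. Multiplying by $|t^{dP-J}| = q^{dP-J}$ converts this into the constraint $|\sum_k (\beta^{(k)} t^{dP-J}) \Psi_i^{(k)}(\underline\uu)| < q^{(d-2)J}$, which is precisely the defining inequality of $N^{\textnormal{aux}}(J; \bbe t^{dP-J})$ in \eqref{eq:rabbit}. Together with the common size constraint $|\uu^{(j)}| < q^J$, this yields the claimed equality of counts.

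The main obstacle is purely the book-keeping of exponents rather than any substantial content: one must verify that the threshold $d-2$ in the upper bound of (ii) is exactly what is required to force the linear combination into $\TT$ with one $q$-power to spare, and that the lower bound $q^{J-dP+d-1}$ matches the second branch of the max in (i) without a gap. The sharpness of these exponents is what makes the trichotomy work cleanly, but no new ingredient beyond the ultrametric inequality and the hypothesis $\chara \FF_q > d$ is needed.
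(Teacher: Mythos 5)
Your proof is correct and takes essentially the same route as the paper: the inequalities in (ii) are precisely the negation of (i) (using that $|\bbe|$ lies in $q^{\ZZ}\cup\{0\}$), and under those inequalities the ultrametric bound $|\sum_k \beta^{(k)}\Psi_i^{(k)}(\underline{\uu})|\leq |\bbe|q^{(d-1)(J-1)}\leq q^{-1}<1$ forces the $\|\cdot\|$ condition in $N^{(d-1)}(J;\bbe)$ to coincide, after rescaling by $t^{dP-J}$, with the condition defining $N^{\textnormal{aux}}(J;\bbe t^{dP-J})$. The paper compresses this into a single line ("the statement clearly follows"); your explicit verification of mutual exclusivity and of the exponent bookkeeping simply fills in what is left implicit there.
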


\begin{proof}
Suppose that alternative (i) fails, so that $|\bbe|$ satisfies the inequalities in alternative (ii).
Then
$$
\left| \sum_{k = 1}^R \beta^{(k)} \Psi^{(k)}_{i}(\underline{\uu}) \right| \leq | \bbe | q^{ (d-1)(J-1) }
\leq q^{d-2}\cdot q^{- (d-1)} < 1,
$$
for any $|\uu^{(1)}|,\dots,|\uu^{(d-1)}|<  q^J$.
But then the statement clearly follows.
\end{proof}

\section{Singular series and singular integral}

Let $f_1,\dots,f_R\in \FF_q[x_1,\dots,x_n]$ be forms of degree $d$ defined over a finite field $\FF_q$. We  will  assume that the system of equations
$f_1=\dots =f_R=0$ defines a smooth  complete intersection in $\PP^{n-1}$.
Let
$m\in \FF_q[t]$ be a monic polynomial and let $\b\in \FF_q[t]^n$ be such that $\gcd(\b,m)=1$
and $f_i(\b)\equiv 0 \bmod{m}$, for $1\leq i\leq R$.
We define
\begin{equation}\label{eq:defF}
\mathbf{F}(\x) = \mathbf{f}(m \x + \b),
\end{equation}
where we write $\mathbf{f}=(f_1,\dots,f_R)$ for the vector of polynomials, and similarly for $\mathbf F$.  In Section \ref{s:apply} we shall initiate the application of the circle method. The purpose of this section is to discuss, in isolation, the  singular series and singular integral that will arise in our analysis.
In traditional applications of the circle method it is customary to recycle estimates from the minor arcs in order to establish the convergence of the singular series and the singular integral.
In the setting of smooth complete intersections defined over the constant field $\FF_q$, however, a direct  treatment is more efficient.

\subsection{Singular series}

Given a non-zero polynomial $g \in \FF_q[t]$ and $\a \in \FF_q[t]^R$,
we define
\begin{equation}\label{eq:D1}
S_g(\a) = |g|^{-n} \sum_{|\y| < |g|  }
\psi \left(  \frac{\a.\mathbf{F}(\y)}{g} \right),
\end{equation}
where $\mathbf{F}$ is given in \eqref{eq:defF} and
 $\psi$ is defined in Section \ref{s:add-characters}.
In this section we study  the {\em singular series}
\begin{eqnarray}
\label{def ss}
\mathfrak{S} = \sum_{ \substack{ g \in \FF_q[t] \setminus \{ 0 \}  \\
\text{$g$ monic}}}
\sum_{ \substack{ \a\in \FF_q[t]^R\\
|\a| < |gm^d| \\ \gcd(\a, g) = 1 } } S_{gm^d}(\a),
\end{eqnarray}
with the aim of establishing its convergence under relatively mild hypotheses.
The singular  series is slightly awkward to work with, since the inner sum
sum over $\a$ runs modulo $gm^d$, but only over those $\a$ which are coprime to $g$.

Define
\begin{equation}\label{eq:def A}
A(g)=\sum_{ \substack{ \a\in \FF_q[t]^R\\
|\a| < |g| \\ \gcd(\a, g) = 1 } } S_{g}(\a),
\end{equation}
for any monic $g\in \FF_q[t]$. This is exactly the sum $\mathcal{A}(g)$ that appears in
the work of Lee  \cite[Eq.~(4.6.24)]{lee}, and
the multiplicativity of $A(g)$ follows from
\cite[Lemma~4.7.2]{lee}.
Moreover, it is easy to check that
\begin{equation}\label{eq:IML}
\sum_{ \substack{ \a\in \FF_q[t]^R\\
|\a| < |gm^d| \\ \gcd(\a, g) = 1 } } S_{gm^d}(\a)=
\sum_{\substack{h\mid m^d\\ \gcd(h,g)=1}} A(g m^d/h).
\end{equation}
For now, we proceed to collect together some facts  about $A(g)$ for an
arbitrary monic $g\in \FF_q[t]$.

\begin{prop}
\label{pro:sga}
Let $d\geq 2$ and
assume that $n\geq dR$. Suppose  that $\gcd(\b,m)=1$ and
$f_i(\b)\equiv 0 \bmod{m}$, for $1\leq i\leq R$.
Then
there exists a constant $C>0$ that
depends only on $d$ and $n$, such that
$$
|A(g)|\leq C^{\omega(g)}|g|^{-(1-\frac{1}{d})(\frac{n}{d}-R)}|\gcd(g,m)|^{(1-\frac{1}{d})\frac{n}{d}+\frac{R}{d}}.
$$
\end{prop}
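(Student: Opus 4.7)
The plan is to bound $A(g)$ one prime at a time. Using the multiplicativity of $A$ established in \cite[Lemma~4.7.2]{lee}, it suffices to bound $|A(\varpi^\ell)|$ for each prime power $\varpi^\ell\|g$ and then take the product.

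For $\varpi\nmid m$, the polynomial $m$ is a unit modulo $\varpi^\ell$, so the change of variables $\z=m\y+\b$ transforms $S_{\varpi^\ell}(\a)$ into the standard complete exponential sum attached to $\mathbf{f}$. Summing over $\a$ coprime to $\varpi^\ell$ and applying the function field Birch-type bound (\cite[\S4.7]{lee}), which rests on Weyl differencing together with $\sigma\leq R-1$ from Remark~\ref{rem:sigma}, yields
\begin{equation*}
|A(\varpi^\ell)|\leq C\,|\varpi|^{-\ell(1-1/d)(n/d-R)}
\end{equation*}
for some $C=C(d,n)$. Since $\gcd(\varpi^\ell,m)=1$ here, this matches the claimed bound.

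For $\varpi\mid m$ with $a=\ord_\varpi(m)\geq 1$, the key observation is that Taylor expansion, valid because $\chara(\FF_q)>d$, together with $\mathbf{f}(\b)\equiv\mathbf{0}\bmod m$, yields $\mathbf{F}(\y)=m\,\mathbf{G}(\y)$ for a polynomial map $\mathbf{G}\in\FF_q[t][\y]^R$ of degree $d$ whose top-degree part is $m^{d-1}\mathbf{f}(\y)$. When $\ell\leq a$, the quotient $\a\cdot\mathbf{F}(\y)/\varpi^\ell$ lies in $\FF_q[t]$, hence $\psi$ is trivial and $S_{\varpi^\ell}(\a)=1$; summing trivially over $\a$ gives $|A(\varpi^\ell)|\leq|\varpi|^{\ell R}$, which agrees with the stated inequality via the identity $(1-1/d)n/d+R/d-(1-1/d)(n/d-R)=R$. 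When $\ell>a$, I would set $h=\varpi^{\ell-a}$ and decompose $\y=\y_1+h\y_2$ and $\a=\a_0+h\a_1$ modulo $\varpi^\ell$. Taylor expanding $\mathbf{G}(\y_1+h\y_2)$ in powers of $h$ shows that every $\y_2$- and $\a_1$-dependent contribution is a polynomial in $\FF_q[t]$ on which $\psi$ acts trivially, so the $\y_2$-sum produces a free factor $|\varpi|^{an}$ and the $\a_1$-sum a free factor $|\varpi|^{aR}$. A unit change of variable $\a'=m'\a_0\bmod h$ with $m'=m/\varpi^a$ then identifies the remaining double sum with the standard complete exponential sum for $\mathbf{G}$ at modulus $h$, and since the top-degree part of $\mathbf{G}$ is proportional to $\mathbf{f}$, its Birch singular locus is still controlled by $\sigma\leq R-1$, so the Birch-type estimate yields the saving $|h|^{-(1-1/d)(n/d-R)}$. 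A short bookkeeping check using $\gcd(\varpi^\ell,m)=\varpi^a$ shows that $|\varpi|^{aR}\cdot|h|^{-(1-1/d)(n/d-R)}$ equals the stated right-hand side.

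Taking the product of the prime-power bounds recovers the full estimate, with the factor $C^{\omega(g)}$ emerging as the product of absolute constants, one per prime divisor of $g$. The main obstacle is the case $\varpi\mid m$: one must carry out the decomposition of $\y$ and $\a$ into low- and high-order digits cleanly, identify the inner sum as a complete exponential sum for a degree $d$ system whose Birch singular locus is still governed by $\sigma\leq R-1$, and verify that no loss at the $m$-primes enters beyond the trivial factor $|\varpi|^{aR}$.
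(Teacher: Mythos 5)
Your skeleton (multiplicativity, splitting primes according to whether they divide $m$, the trivial bound $|A(\varpi^\ell)|\leq|\varpi|^{\ell R}$ for $\ell\leq\ord_\varpi(m)$, and the final exponent bookkeeping) matches the paper, but the two places where the actual saving is supposed to come from do not hold up. For $\varpi\nmid m$ you attribute the bound $|A(\varpi^\ell)|\leq C|\varpi|^{-\ell(1-\frac{1}{d})(\frac{n}{d}-R)}$ to the Birch-type estimate in Lee's thesis, i.e.\ to Weyl differencing plus $\sigma\leq R-1$. Weyl differencing gives, for $\gcd(\a,g)=1$, a saving of order $|g|^{-\frac{n-\sigma}{2^{d-1}(d-1)R}}$ per complete sum, hence $|A(g)|\ll |g|^{R+\ve-\frac{n-\sigma}{2^{d-1}(d-1)R}}$ after summing over $\a$; for $d\geq 3$ this is strictly weaker than the claimed exponent $(1-\frac{1}{d})(\frac{n}{d}-R)$ (compare $\frac{1}{2^{d-1}(d-1)}$ with $\frac{d-1}{d^2}$), and under the Proposition's sole hypothesis $n\geq dR$ it may give no saving at all. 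Recycling minor-arc estimates here would also reintroduce the quadratic-in-$R$ constraint that this section is specifically designed to avoid. The paper instead proves Lemma \ref{lem:111} by direct point counting: orthogonality expresses $A(\pi^e)$ through $N(\pi^e)$, Deligne's bound (via the Katz appendix to Hooley) handles $e=1$, Hensel lifting (using smoothness of $X$) handles lifting, and a sorting-by-valuation argument handles $e>d$; that is where the exponent $(1-\frac{1}{d})(\frac{n}{d}-R)$ genuinely comes from, and you would need to reproduce some such argument.

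For $\varpi\mid m$ with $\ell>a=\ord_\varpi(m)$, your decomposition $\y=\y_1+h\y_2$, $\a=\a_0+h\a_1$ and the resulting free factors $|\varpi|^{an}$ and $|\varpi|^{aR}$ are fine, and your bookkeeping identity is correct; the flaw is the assertion that the reduced complete sum for $\mathbf{G}=\mathbf{F}/m$ modulo $h$ can be handled by a Birch-type estimate because the top-degree part of $\mathbf{G}$ is $m^{d-1}\mathbf{f}$. Since $\varpi\mid m$, every coefficient of $\mathbf{G}$ in degrees $\geq 2$ is divisible by $\varpi$, so modulo $\varpi$ the system $\mathbf{G}$ degenerates to an affine-linear system with matrix $(\nabla f_i(\b))$; no bound for nondegenerate degree-$d$ systems applies, and the saving $|h|^{-(1-\frac{1}{d})(\frac{n}{d}-R)}$ does not follow as stated. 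The correct way to exploit this structure is the paper's Lemma \ref{lem:222}: since $\gcd(\b,\varpi)=1$ and $X$ is smooth, the matrix $(\nabla f_i(\b))$ has full rank modulo $\varpi$, Hensel lifting gives $N^\dagger(\pi^e)=|\pi|^{n-R}N^\dagger(\pi^{e-1})$, and hence $A(\varpi^\ell)=0$ exactly when $\ell>a$ --- a vanishing statement that is both what the proof needs here and what is used again later (Lemma \ref{lem:snorlax}), and which is stronger than anything your route could produce.
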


By multiplicativity it suffices to bound $A(g)$ when $g=\pi^e$ is a prime power, with $e\geq 1$.
To begin with,  orthogonality of characters yields
\begin{equation}\label{eq:A-step1}
\begin{split}
A(\pi^e)&=
|\pi|^{-en}
\sum_{ \substack{ \a\in \FF_q[t]^R\\
|\a| < |\pi^e| \\ \gcd(\a,\pi)=1 } }
\sum_{|\y|<|\pi^e|} \psi\left(\frac{\a.\mathbf{F}(\y)}{\pi^e}\right)\\
&=
|\pi|^{-e(n-R)} \left(N(\pi^e) -|\pi|^{n-R} N(\pi^{e-1})\right),
\end{split}\end{equation}
where
\begin{equation}\label{eq:fan}
N(\pi^e)=\#\left\{\y\in \FF_q[t]^n: |\y|<|\pi^e|,~ F_i(\y)\equiv 0\bmod{\pi^e} \text{ for $1\leq i\leq R$}\right\}.
\end{equation}
Recall that $F_i$ is defined in terms of $f_i$ via \eqref{eq:defF}, for $1\leq i\leq R$.
We shall need to differentiate according to whether or not $\pi$ divides $m$.

\begin{lemma}\label{lem:111}
Let $d\geq 2$ and
assume that $n\geq dR$.
Assume that $e\geq 1$ and
$\pi\nmid m$. Then
$$
A(\pi^e) \ll |\pi|^{-(1-\frac{1}{d})(\frac{n}{d}-R)e},
$$
where the implied constant depends only on $d$ and $n$.
\end{lemma}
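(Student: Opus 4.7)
The plan is to exploit the fact that $f_1, \ldots, f_R$ have coefficients in $\FF_q$ rather than $\FF_q[t]$, so $X$ remains smooth upon reduction modulo any prime $\pi$ of $\FF_q[t]$. In particular, the affine singular locus of $\mathbf{f}$ mod $\pi$ consists solely of the origin, and combined with $\pi \nmid m$, this means that $\mathbf{F} \bmod \pi$ has a single affine singular point $\y_0$, characterised by $m\y_0 + \b \equiv \0 \bmod \pi$. I will combine this geometric observation with Hensel lifting and the homogeneity of $\mathbf{f}$.

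Starting from \eqref{eq:A-step1}, I decompose $N(\pi^e) = N_{\mathrm{ns}}(\pi^e) + N_{\mathrm s}(\pi^e)$ according to whether $\y \bmod \pi$ lies on the smooth or singular locus of $\mathbf{F} \bmod \pi$. Hensel's lemma yields $N_{\mathrm{ns}}(\pi^e) = N_{\mathrm{ns}}(\pi)\,|\pi|^{(n-R)(e-1)}$, so the nonsingular contributions to $N(\pi^e)$ and $|\pi|^{n-R} N(\pi^{e-1})$ cancel out, giving
$$
A(\pi^e) = |\pi|^{-e(n-R)}\bigl(N_{\mathrm s}(\pi^e) - |\pi|^{n-R} N_{\mathrm s}(\pi^{e-1})\bigr).
$$
For $N_{\mathrm s}(\pi^e)$, I then substitute $\y = \y_0 + \pi\y_1$ and write $m\y_0 + \b = \pi\c$, whereupon homogeneity of $\mathbf{f}$ gives $\mathbf{F}(\y) = \pi^d \mathbf{f}(\c + m\y_1)$. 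Hence $\mathbf{F}(\y)\equiv \0 \bmod \pi^e$ becomes $\mathbf{f}(\c + m\y_1) \equiv \0 \bmod \pi^{\max(0, e-d)}$. The change of variables $\z = \c + m\y_1$, a bijection since $\gcd(m,\pi)=1$, then gives $N_{\mathrm s}(\pi^e) = |\pi|^{n(d-1)} N_{\mathbf{f}}(\pi^{e-d})$ when $e \geq d+1$, and $N_{\mathrm s}(\pi^e) = |\pi|^{n(e-1)}$ when $1 \leq e \leq d$, where $N_{\mathbf{f}}(\pi^j)$ denotes the analogue of $N(\pi^j)$ for the unshifted $\mathbf{f}$.

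Substituting these formulas back into the expression for $A(\pi^e)$ yields the recursion
$$
|A(\pi^e)| = |\pi|^{-(n-dR)}\,|A_{\mathbf{f}}(\pi^{e-d})| \quad (e \geq d+1),
$$
where $A_{\mathbf{f}}$ is the analogue of $A$ with $m=1$ and $\b = \0$. Under the hypothesis $n \geq dR$, the prefactor is at most $1$. Iteration reduces the problem to the base cases $j \in \{1, \ldots, d\}$. For $j \geq 2$ the explicit formula $N_{\mathrm s}(\pi^j) = |\pi|^{n(j-1)}$ yields $|A_{\mathbf{f}}(\pi^j)| \leq |\pi|^{jR - n}$, and a short calculation using $n \geq dR$ confirms $|\pi|^{jR - n} \leq |\pi|^{-\alpha j}$ with $\alpha = (1-1/d)(n/d - R)$.

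The hard part will be the base case $j = 1$, where the trivial bound gives only $|A_{\mathbf{f}}(\pi)| = O(1)$, which is insufficient once $\alpha > 0$. Here I would appeal to Deligne's bound for smooth projective complete intersections: since $\mathbf{f} \bmod \pi$ cuts out such a variety of dimension $n - R - 1$, the standard affine-versus-projective comparison yields $N_{\mathbf{f}}(\pi) = |\pi|^{n-R} + O(|\pi|^{(n-R+1)/2})$, so $|A_{\mathbf{f}}(\pi)| \ll |\pi|^{-(n-R-1)/2}$, and a direct verification shows this is at most $|\pi|^{-\alpha}$ under $n \geq dR$. Combining the recursion with the base cases then completes the proof.
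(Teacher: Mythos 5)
Your argument is correct, and it uses the same basic ingredients as the paper's proof (Hensel lifting of the points that reduce to smooth points mod $\pi$, homogeneity to handle the points reducing to the unique singular residue class, and Deligne's bound for the case $e=1$), but it organises them differently. The paper first removes the shift via the change of variables $\z=m\y+\b$ (legitimate since $\pi\nmid m$, and in fact this shows $A(\pi^e)=A_{\mathbf{f}}(\pi^e)$ outright), and then, for $e>d$, expands $N(\pi^e)$ in one go by sorting on the $\pi$-adic valuation of $\z$, bounding the terms with $dj\geq e$ trivially; this produces an error term $O(|\pi|^{(1-\frac1d)en})$ and the main terms are then killed by a telescoping cancellation, leaving at most one surviving term. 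Your exact recursion $A(\pi^e)=|\pi|^{-(n-dR)}A_{\mathbf{f}}(\pi^{e-d})$ for $e\geq d+1$ is a cleaner way to package the same valuation-sorting: it avoids any error term and reduces everything to the base cases $1\leq j\leq d$, which you treat exactly as the paper does ($j=1$ via Deligne, $2\leq j\leq d$ via the explicit count $|\pi|^{n(j-1)}$ for the singular part). Two small points of care. First, your remark that ``the prefactor is at most $1$'' is not by itself enough to conclude: iterating $k$ times and landing at $j=e-kd$ gives $|A(\pi^e)|=|\pi|^{-k(n-dR)}|A_{\mathbf{f}}(\pi^{j})|\ll |\pi|^{-k(n-dR)-\alpha j}$ with $\alpha=(1-\frac1d)(\frac nd-R)$, and you need $k(n-dR)+\alpha j\geq \alpha e$, i.e.\ $n-dR\geq d\alpha=\frac{(d-1)(n-dR)}{d}$, which does hold, but should be said. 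Second, your base cases are phrased for $A_{\mathbf{f}}$, whereas for $1\leq e\leq d$ the lemma concerns the shifted $A(\pi^e)$; this is harmless since the identical computation applies with the singular class $\y_0$ in place of the origin (equivalently, $A(\pi^e)=A_{\mathbf{f}}(\pi^e)$ by the change of variables mod $\pi^e$), but it deserves a sentence.
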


\begin{proof}
Since $\pi\nmid m$ we can make a non-singular change of variables $\z=m\y+\b$ modulo $\pi^e$, finding that
$$
N(\pi^e)=\#\left\{\z\in \FF_q[t]^n: |\z|<|\pi^e|,~ f_i(\z)\equiv 0\bmod{\pi^e} \text{ for $1\leq i\leq R$}\right\}.
$$
It will be convenient to also define
$$
N^*(\pi^e)=\#\left\{\z\in \FF_q[t]^n: |\z|<|\pi^e|,~ \pi\nmid \z, ~f_i(\z)\equiv 0\bmod{\pi^e} \text{ for $1\leq i\leq R$}\right\}.
$$
In view of the fact that  $f_1=\dots=f_R=0$ defines a smooth complete intersection in $\PP^{n-1}$,
Hensel lifting yields
\begin{equation}\label{eq:Hensel}
N^*(\pi^e)=|\pi|^{n-R}N^*(\pi^{e-1}) \quad \text{ if $e\geq 2$}.
\end{equation}

Our analysis of $N(\pi^e)$ will depend on the relative size of $e$ to $d$.
Suppose first that $e=1$.
As explained in the  appendix by Katz to  \cite{hooley},
Deligne's resolution of the Riemann hypothesis now yields
\begin{equation}\label{eq:case1}
N(\pi)=|\pi|^{n-R}+O(|\pi|^{\frac{n-R+1}{2}}),
\end{equation}
whence
$$
A(\pi)=|\pi|^{-(n-R)} \left(N(\pi) -|\pi|^{n-R}\right)\ll |\pi|^{-\frac{n-R-1}{2}},
$$
by \eqref{eq:A-step1},
which is satisfactory for the lemma.

Suppose now that $2\leq e\leq d$.
Then we have
$$
N(\pi^e)=N^*(\pi^e)+
|\pi|^{n(e-1)}.
$$
On appealing to \eqref{eq:A-step1} and \eqref{eq:Hensel}, it therefore follows that
$$
A(\pi^e)=
|\pi|^{eR-n} (1-|\pi|^{-R})=O(|\pi|^{eR-n}).
$$
It is easy to check that $eR-n\leq -(1-\frac{1}{d})(\frac{n}{d}-R)e$ if $e\leq d$
and $n\geq dR$.
Thus this too is satisfactory for the lemma.

Finally we must deal with the case $e>d$.
On sorting according to the $\pi$-adic valuation of $\z$, we obtain
\begin{align*}
N(\pi^e)
&=\sum_{0\leq j\leq e}\#\left\{\z\in \FF_q[t]^n :
|\z|<|\pi^e|, ~\gcd(\z,\pi^e)=\pi^j, \f(\z)\equiv \0\bmod{\pi^e}\right\}\\
&=\sum_{0\leq j\leq e}
\hspace{-0.1cm}
\#\left\{\z'\in \FF_q[t]^n :
|\z'|<|\pi^{e-j}|, ~\gcd(\z',\pi^{e-j})=1, \pi^{dj} \f(\z')\equiv \0\bmod{\pi^e}\right\}.
\end{align*}
When $dj\geq e$ we bound the inner cardinality by $|\pi|^{(e-j)n}$.
When $dj<e$, the congruence only depends on the value of $\z'$ modulo $\pi^{e-dj}$, whence
\begin{equation}\label{eq:spoon}
N(\pi^e)=
\sum_{\substack{0\leq j< e/d
}}
|\pi|^{(d-1)jn}
N^*(\pi^{e-dj})+O\left(|\pi|^{(1-\frac{1}{d})en}\right),
\end{equation}
since
$
\sum_{e/d\leq j\leq e}|\pi|^{(e-j)n}\ll |\pi|^{(1-\frac{1}{d})en}.
$
Note that $n-R+(1-\frac{1}{d})(e-1)n\geq (1-\frac{1}{d})en$.
Returning to
\eqref{eq:A-step1}, we see that
\begin{align*}
|\pi|^{e(n-R)}
A(\pi^e)
&=
N(\pi^e) -|\pi|^{n-R} N(\pi^{e-1})\\
&=
\sum_{\substack{0\leq j< e/d}}
|\pi|^{(d-1)jn}
N^*(\pi^{e-dj})\\
&\quad -
|\pi|^{n-R}
\sum_{\substack{0\leq j< (e-1)/d}}\hspace{-0.2cm}
|\pi|^{(d-1)jn}
N^*(\pi^{e-1-dj})
+O\left(|\pi|^{\frac{n}{d}-R+(1-\frac{1}{d})en}\right).
\end{align*}
The first sum runs over $j$ such that $e-dj\geq 1$ and the second sum only involves $j$ with
$e-dj\geq 2$. It follows from \eqref{eq:Hensel} that all terms cancel apart from a possible term with $e-dj=1$, provided that $d\mid e-1$.  It now follows from \eqref{eq:case1}  that
\begin{align*}
A(\pi^e)
&\ll |\pi|^{-e(n-R)}\left(
|\pi|^{(d-1)\frac{(e-1)n}{d}+n-R}
+|\pi|^{\frac{n}{d}-R+(1-\frac{1}{d})en}\right)\\
&\ll |\pi|^{-e(n-R)}\cdot  |\pi|^{\frac{n}{d}-R+(1-\frac{1}{d})en}\\
&=  |\pi|^{-(\frac{n}{d}-R)(e-1)}.
\end{align*}
But $e-1> e(1-\frac{1}{d})$ if $e>d$, whence
$
A(\pi^e) \ll  |\pi|^{-(1-\frac{1}{d})(\frac{n}{d}-R)e}.
$
This finally completes the proof of the lemma.
 \end{proof}

\begin{lemma}\label{lem:222}
Suppose that $\mu\in \ZZ$ is such that $\pi^\mu\| m$, with  $\mu\geq 1$.
Assume that $e\geq 1$ and
$f_i(\b)\equiv 0 \bmod{\pi^\mu}$ for $1\leq i\leq R$, with  $\gcd(\b,\pi)=1$.
 Then
$$
A(\pi^e)=0 \quad \text{ if $e>\mu$}
$$
and
$$
|A(\pi^e)|\leq 2|\pi|^{eR} \quad  \text{ if $e\leq \mu$}.
$$
In particular, we have
$|A(\pi^e)| \leq 2
|\pi|^{\min\{e,\mu\}R}.$
\end{lemma}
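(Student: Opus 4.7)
The plan is to Taylor-expand $\mathbf{F}(\y) = \mathbf{f}(m\y + \b)$ about $\b$---permissible since $\chara(\FF_q) > d$---and exploit the facts that $m^j$ is divisible by $\pi^{j\mu}$ for every $j \geq 1$ and that $\mathbf{f}(\b)$ is divisible by $\pi^\mu$ by hypothesis. When $e \leq \mu$, every term in
$$F_i(\y) = f_i(\b) + \sum_{j=1}^{d} \frac{m^j}{j!}\, D^j f_i(\b)[\y^j]$$
is divisible by $\pi^e$, so $F_i(\y) \equiv 0 \pmod{\pi^e}$ for all $\y$, hence $\psi(\a.\mathbf{F}(\y)/\pi^e) = 1$ and $S_{\pi^e}(\a) = 1$. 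Summing over $\a \in \FF_q[t]^R$ with $|\a| < |\pi^e|$ and $\gcd(\a,\pi) = 1$ then yields $|A(\pi^e)| = |\pi|^{eR} - |\pi|^{(e-1)R} \leq |\pi|^{eR}$, which is stronger than the claimed bound.

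For the vanishing of $A(\pi^e)$ when $e > \mu$, the idea is to renormalise. Writing $m = \pi^\mu m'$ with $\gcd(m',\pi) = 1$ and $f_i(\b) = \pi^\mu h_i$ with $h_i \in \FF_q[t]$, I set $\tilde{F}_i(\y) = F_i(\y)/\pi^\mu \in \FF_q[t][\y]$, so that Taylor reduction gives
$$\tilde{F}_i(\y) \equiv h_i + m'\, \nabla f_i(\b)\cdot\y \pmod{\pi}.$$
The congruence $F_i(\y) \equiv 0 \pmod{\pi^e}$ is equivalent to $\tilde{F}_i(\y) \equiv 0 \pmod{\pi^{e-\mu}}$, and since the latter depends only on $\y$ modulo $\pi^{e-\mu}$, a straightforward fibre count gives $N(\pi^e) = |\pi|^{\mu n}\,\tilde{N}(\pi^{e-\mu})$, where $\tilde{N}(\pi^k)$ counts $\y'$ with $|\y'|<|\pi^k|$ satisfying $\tilde{\mathbf{F}}(\y') \equiv \mathbf{0} \pmod{\pi^k}$. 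Since $\gcd(\b,\pi) = 1$, the reduction $\overline{\b}$ represents a point of the smooth complete intersection $X$ over $\FF_q[t]/\pi$, so $J_{\mathbf{f}}(\overline{\b})$ has rank $R$ modulo $\pi$, and multiplication by the unit $m'$ preserves this rank. Consequently, the mod-$\pi$ affine linear system $\tilde{\mathbf{F}} \equiv \mathbf{0}$ is consistent with exactly $|\pi|^{n-R}$ solutions, and at each such solution the Jacobian of $\tilde{\mathbf{F}}$ has rank $R$. Hensel's lemma now delivers $\tilde{N}(\pi^k) = |\pi|^{k(n-R)}$ for all $k \geq 1$, and substituting into \eqref{eq:A-step1} produces $N(\pi^e) = |\pi|^{n-R}\, N(\pi^{e-1})$, whence $A(\pi^e) = 0$.

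The main obstacle is the boundary case $e = \mu + 1$, where the telescoping cancellation in \eqref{eq:A-step1} depends on the exact identity $\tilde{N}(\pi) = |\pi|^{n-R}$ rather than on the Hensel recurrence $\tilde{N}(\pi^k) = |\pi|^{n-R}\,\tilde{N}(\pi^{k-1})$ between consecutive powers. This identity is equivalent to the consistency of the mod-$\pi$ affine linear system, which in turn amounts to the surjectivity of $J_{\mathbf{f}}(\overline{\b})$ modulo $\pi$---that is, to smoothness of $X$ at $\overline{\b}$, together with the condition $\gcd(\b,\pi)=1$. Once this is in place, the remaining pieces---Taylor expansion, the case $e \leq \mu$, and the inductive Hensel step for $k \geq 1$---are routine bookkeeping.
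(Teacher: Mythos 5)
Your proposal is correct and follows essentially the same route as the paper: both reduce $A(\pi^e)$ via \eqref{eq:A-step1} to the counts $N(\pi^e)$, factor out $|\pi|^{\mu n}$ when $e>\mu$, and use the rank-$R$ Jacobian modulo $\pi$ at $\b$ (smoothness of the complete intersection together with $\gcd(\b,\pi)=1$) to run a Hensel-lifting argument that yields the exact cancellation, including at the boundary case $e=\mu+1$. The only differences are cosmetic: for $e\leq\mu$ you evaluate $A(\pi^e)$ exactly by observing that the congruences are vacuous (the paper simply uses the trivial bound $2|\pi|^{en}$, giving the stated $2|\pi|^{eR}$), and for $e>\mu$ you work with the renormalised polynomial $\mathbf{F}/\pi^{\mu}$, which is affine linear modulo $\pi$, where the paper instead works with the constrained count $N^\dagger(\pi^e)$ of \eqref{eq:late} and its one-step recursion \eqref{eq:seminar}.
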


\begin{proof}
It follows from
\eqref{eq:defF} and
\eqref{eq:A-step1} that
\begin{align*}
A(\pi^e)&=
|\pi|^{-e(n-R)} \left(N(\pi^e) -|\pi|^{n-R} N(\pi^{e-1})\right),
\end{align*}
where
$$
N(\pi^e)=\#\left\{\y\in \FF_q[t]^n: |\y|<|\pi^e|,~ \f(m\y+\b)\equiv \0\bmod{\pi^e}\right\}.
$$
If $e\leq \mu$ we simply take
$$
|N(\pi^e) -|\pi|^{n-R} N(\pi^{e-1})|\leq
 N(\pi^e)+
 |\pi|^{n-R} N(\pi^{e-1})
 \leq 2 |\pi|^{en},
 $$
 whence
$
|A(\pi^e)|\leq 2 |\pi|^{eR}$.

Suppose now that $e>\mu$ and write $m=\pi^\mu m'$, where $m'\in \FF_q[t]$ is coprime to $\pi$.
Then, for each $1\leq i\leq R$, the
congruence
$f_i(\pi^\mu m'\y+\b)\equiv 0\bmod{\pi^e}$ only depends on the value of $\y$ modulo $\pi^{e-\mu}$.
Hence
\begin{align*}
N(\pi^e)
&=|\pi|^{\mu n}\#\left\{\y\in \FF_q[t]^n: |\y|<|\pi^{e-\mu}|,~ \f(\pi^\mu m'\y+\b)\equiv \0\bmod{\pi^e} \right\}\\
&=|\pi|^{\mu n}N^{\dagger}(\pi^e),
\end{align*}
where
\begin{equation}\label{eq:late}
N^\dagger(\pi^e)=
\#\left\{\z\in \FF_q[t]^n: |\z|<|\pi^{e}|,~ \f(\z)\equiv \0\bmod{\pi^e}, ~\z\equiv \b \bmod{\pi^\mu} \right\}.
\end{equation}

We note that $e\geq 2$, since we are assuming that $\mu\geq 1$.
Making the change of variables $\z=\mathbf v+\pi^{e-1}\mathbf w$, for $\mathbf{v}$ modulo $\pi^{e-1}$ and $\mathbf w$ modulo $\pi$, we deduce that
\begin{align*}
N^\dagger(\pi^e)
&=
\hspace{-0.2cm}
\sum_{\substack{
\mathbf v\in \FF_q[t]^n\\
|\mathbf v|<|\pi^{e-1}|\\
\f(\mathbf v)\equiv \0\bmod{\pi^{e-1}}\\
\mathbf v\equiv \b \bmod{\pi^\mu}}}
\hspace{-0.3cm}
\#\left\{\mathbf w\in \FF_q[t]^n:
\begin{array}{l}
|\mathbf w|<|\pi|\\
\mathbf w.\nabla f_i(\mathbf v)\equiv -f_i(\mathbf v)/\pi^{e-1}\bmod{\pi}\\
\text{for $1\leq i\leq R$}
\end{array}
\right\}.
\end{align*}
Since  $\gcd(\b,\pi)=1$, we have  $\gcd(\mathbf v,\pi)=1$ in the outer sum.  Thus
the $R\times n$ matrix $(\nabla f_1(\mathbf v),\dots, \nabla f_R(\mathbf v))$ has full rank,
since $f_1=\dots=f_R=0$ defines a smooth complete intersection in $\PP^{n-1}$.
We conclude that the inner quantity is precisely $|\pi|^{n-R}$, whence
\begin{equation}\label{eq:seminar}
N^\dagger(\pi^e)
=|\pi|^{n-R} N^\dagger(\pi^{e-1})
.
\end{equation}
We have therefore shown that
\begin{align*}
A(\pi^e)&=
|\pi|^{-e(n-R)+\mu n} \left(N^\dagger(\pi^e) -|\pi|^{n-R} N^\dagger(\pi^{e-1})\right)=0.
\end{align*}
The statement of the lemma follows.
 \end{proof}

\begin{proof}[Proof of  Proposition \ref{pro:sga}]
We  write $g=hg'$ where $h=\gcd(g,m)$ and $g'$ is coprime to $m$.
Once combined with Lemmas \ref{lem:111} and \ref{lem:222},
it follows from
the multiplicativity of $A(g)$ that
\begin{align*}
|A(g)|
&=
\prod_{\substack{\pi^e\| g'
}} |A(\pi^e)|
\prod_{\substack{\pi^e\| h
}} |A(\pi^e)| \\
&\leq C^{\omega(g)} |g'|^{-(1-\frac{1}{d})(\frac{n}{d}-R)} |h|^R\\
&= C^{\omega(g)} |g|^{-(1-\frac{1}{d})(\frac{n}{d}-R)} |h|^{
(1-\frac{1}{d})(\frac{n}{d}-R)+R},
\end{align*}
for some constant $C>0$ that depends only on $d$ and $n$.
Now clearly
$$
\left(1-\frac{1}{d}\right)\left(\frac{n}{d}-R\right)+R=
\left(1-\frac{1}{d}\right)\frac{n}{d}+\frac{R}{d},
$$
and so the statement of
Proposition \ref{pro:sga} follows.
\end{proof}

\medskip

We may now return to the treatment of the singular series
\eqref{def ss}. We begin by proving the following result. 

\begin{lemma}\label{lem:snorlax}
We have 
$$
\sum_{ \substack{ \a\in \FF_q[t]^R\\
|\a| < |gm^d| \\ \gcd(\a, g) = 1 } } S_{gm^d}(\a)=\1_{\gcd(g,m)=1}  |m|^R
A(g).
$$
\end{lemma}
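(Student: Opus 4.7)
The plan is to combine the factorization identity \eqref{eq:IML} with the multiplicativity of $A$ and the vanishing result in Lemma \ref{lem:222}. I would first dispose of the case $\gcd(g,m)\neq 1$ by showing that every summand in \eqref{eq:IML} vanishes. Indeed, if $\pi\mid \gcd(g,m)$ and $\mu_\pi\geq 1$ denotes the valuation of $m$ at $\pi$, then the condition $\gcd(h,g)=1$ forces $\pi\nmid h$, so the valuation of $gm^d/h$ at $\pi$ is at least $1+d\mu_\pi$, which exceeds $\mu_\pi$ since $d\geq 2$. Lemma \ref{lem:222} then kills the local Euler factor, so multiplicativity gives $A(gm^d/h)=0$ for every $h$, matching the indicator on the right of the claimed identity.

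When $\gcd(g,m)=1$ the coprimality condition $\gcd(h,g)=1$ in \eqref{eq:IML} becomes vacuous, and multiplicativity yields $A(gm^d/h)=A(g)A(m^d/h)$. Reindexing $h'=m^d/h$, the sum collapses to
$$
A(g)\sum_{h'\mid m^d} A(h') = A(g)\prod_{\pi\mid m}\left(\sum_{e=0}^{d\mu_\pi} A(\pi^e)\right),
$$
so the proof is reduced to showing that each Euler factor equals $|\pi|^{\mu_\pi R}$; taking the product then produces the desired $|m|^R$.

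The remaining local computation is short. Lemma \ref{lem:222} already gives $A(\pi^e)=0$ for $e>\mu_\pi$, so only $0\leq e\leq \mu_\pi$ survives. For such $e$, because $\pi^{\mu_\pi}\mid m$ we have $m\y\equiv \mathbf{0}\bmod{\pi^e}$, and by hypothesis $\mathbf{f}(\b)\equiv \mathbf{0}\bmod m$, so every $\y$ satisfies $\mathbf{F}(\y)=\mathbf{f}(m\y+\b)\equiv \mathbf{f}(\b)\equiv \mathbf{0}\bmod{\pi^e}$. Hence $N(\pi^e)=|\pi|^{en}$ in the notation of \eqref{eq:fan}. Substituting into \eqref{eq:A-step1} gives, for $1\leq e\leq \mu_\pi$,
$$
A(\pi^e)=|\pi|^{-e(n-R)}\bigl(|\pi|^{en}-|\pi|^{n-R}|\pi|^{(e-1)n}\bigr)=|\pi|^{eR}-|\pi|^{(e-1)R},
$$
and directly $A(1)=S_1(\mathbf{0})=1$. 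The sum therefore telescopes to $\sum_{e=0}^{\mu_\pi}A(\pi^e)=|\pi|^{\mu_\pi R}$, and taking the product over $\pi\mid m$ gives $|m|^R$.

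There is no serious obstacle: once \eqref{eq:IML} and the multiplicativity of $A$ are in hand, the lemma is essentially a bookkeeping exercise. The only subtle points to flag are the correct interpretation of \eqref{eq:A-step1} at $e=1$ (where $N(1)=1$), and the arithmetic $1+d\mu_\pi>\mu_\pi$ that makes the vanishing argument go through in the case $\gcd(g,m)\neq 1$.
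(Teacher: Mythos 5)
Your argument is correct, and it diverges from the paper's proof in its second half. The first step is the same in substance: starting from \eqref{eq:IML}, both you and the paper use multiplicativity of $A$ together with the vanishing $A(\pi^e)=0$ for $e>\mu$ from Lemma \ref{lem:222} to show that every term dies unless $\gcd(g,m)=1$ (the paper phrases this via the factorisation $\tilde m=m_0m_1$ and $g_0=gm_0$, you argue prime-by-prime at $\pi\mid\gcd(g,m)$, noting $1+d\mu_\pi>\mu_\pi$ -- same content). Where you differ is in evaluating $\sum_{h\mid m^d}A(m^d/h)=|m|^R$: the paper converts this sum, via orthogonality of characters, into $N(m^d)/|m|^{d(n-R)}$, then applies the Chinese remainder theorem and the Hensel-type recursion \eqref{eq:seminar} for $N^\dagger$ (which exploits smoothness of the complete intersection and $\gcd(\b,\pi)=1$) to get $N(m^d)=|m|^{R+d(n-R)}$; you instead factor into local Euler factors and compute each $A(\pi^e)$ exactly for $e\leq\mu_\pi$ from the trivial observation that $m\y+\b\equiv\b\bmod{\pi^e}$ forces $N(\pi^e)=|\pi|^{en}$, so that $A(\pi^e)=|\pi|^{eR}-|\pi|^{(e-1)R}$ and the factor telescopes to $|\pi|^{\mu_\pi R}$. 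Your route is more elementary for this part -- it needs no Hensel lifting or smoothness, only the congruence $\f(\b)\equiv\0\bmod m$ -- and it sharpens the bound $|A(\pi^e)|\leq 2|\pi|^{eR}$ of Lemma \ref{lem:222} to an exact evaluation; the paper's route, by contrast, recycles the identity \eqref{eq:seminar} it has already established, so costs nothing extra in context. Your handling of the $e=1$ edge case of \eqref{eq:A-step1} with $N(1)=1$ is the right thing to check, and it is fine.
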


\begin{proof}
It follows from \eqref{eq:IML} that 
$$
\sum_{ \substack{ \a\in \FF_q[t]^R\\
|\a| < |gm^d| \\ \gcd(\a, g) = 1 } } S_{gm^d}(\a)=
\sum_{\substack{h\mid m^d\\ \gcd(h,g)=1}} A(g \tilde m),
$$
where $\tilde m=m^d/h$. 
Next, we factorise $\tilde m=m_0m_1$, where
$$
m_0=\prod_{\substack{\pi^\nu\| \tilde m\\ \pi\mid g}} \pi^\nu, \quad
m_1=\prod_{\substack{\pi^\nu\| \tilde m\\ \pi\nmid g}} \pi^\nu.
$$
Then we may write $g \tilde m = g_0m_1$, where
$g_0=g m_0$. In particular, $\gcd(m_0,m_1)=1$ and
$\gcd(g_0, m_1) = 1$, whence
$
A(g \tilde m)=A(g_0)A(m_1),
$
by multiplicativity.
The first part of Lemma \ref{lem:222} implies that the first  product vanishes
as soon as $g$ shares a common prime factor with $m$.
Therefore $m_0=1$ and we deduce that 
$$
\sum_{ \substack{ \a\in \FF_q[t]^R\\
|\a| < |gm^d| \\ \gcd(\a, g) = 1 } } S_{gm^d}(\a)=\1_{\gcd(g,m)=1} 
A(g)
\sum_{\substack{h\mid m^d}} A(\tilde m),
$$
since  the condition $\gcd(h,g)=1$ is implied by the condition $\gcd(g,m)=1$ when $h\mid m^d$.
 But
$$
\sum_{\substack{h\mid m^d}} A(\tilde m)=
\sum_{ \substack{
\a\in \FF_q[t]^R\\
 |\a| <|m^d| }}
S_{m^d}(\a)=
\frac{N(m^d)}{|m|^{d(n-R)}},
$$
by orthogonality of characters.  Using the Chinese remainder theorem we see that
$$
N(m^d)=\prod_{\pi^\mu\| m} N(\pi^{d\mu})=\prod_{\pi^\mu\| m} |\pi|^{\mu n}N^\dagger(\pi^{d\mu}),
$$
in the notation of \eqref{eq:late}. Appealing to \eqref{eq:seminar}, it now follows that
$$
N(m^d)
= |m|^n\prod_{\pi^\mu\| m} |\pi|^{(n-R)(d-1)\mu} N^\dagger(\pi^{\mu})= |m|^{R+d(n-R)},
$$
since $N^\dagger(m)=1$.
We deduce that
$$
\sum_{\substack{h\mid m^d}} A(\tilde m)=|m|^R,
$$
from which the lemma follows. 
\end{proof}

We define the truncated singular series
\begin{equation}\label{eq:scale}
\mathfrak{S}(B)
= \sum_{ \substack{ g \in \FF_q[t]\\
\text{$g$ monic}
 \\ 0<  |g| \leq   q^B  } }
\sum_{ \substack{
\a\in \FF_q[t]^R\\
 |\a| <|gm^d|  \\ \gcd(\a, g) = 1 } }
S_{gm^d}(\a),
\end{equation}
for any $B \in \ZZ_{\geq 0}$. It follows from Lemma \ref{lem:snorlax} that 
\begin{equation}\label{eq:jerk}
\mathfrak{S}(B)=|m|^R
\sum_{ \substack{\text{$g$ monic} \\ 0 < |g| \leq q^B\\ \gcd(g,m)=1 } }
A(g).
\end{equation}
Formally extending the sum to infinity, the singular series
\eqref{def ss} can also be written
\begin{equation}\label{eq:jerky}
\mathfrak{S} = |m|^R
\sum_{ \substack{\text{$g$ monic} \\ \gcd(g,m)=1 } }
A(g).
\end{equation}

We shall need to establish the convergence of
$\mathfrak{S}$, and show how well we can approximate it by
 $\mathfrak{S}(B)$.
For this we are led to  study the quantity
\begin{equation}\label{eq:ST}
\mathcal{S}(T)
=
\sum_{\substack{\text{$g$ monic}\\ |g|=q^T\\ \gcd(g,m)=1}}
  \left| A(g)\right|,
\end{equation}
for any  $T \in \ZZ_{\geq 0}$.

\begin{lemma}\label{lem:toad}
Let $d\geq 2$ and
assume that $n> dR$.
Let $\ve>0$.
Suppose  that $\gcd(\b,m)=1$ and
$f_i(\b)\equiv 0 \bmod{m}$, for $1\leq i\leq R$.
Then,
for any $T\in \ZZ_{\geq 0}$, we have
$$
\mathcal{S}(T)
\ll_\ve
q^{T(1+\ve)-T(1-\frac{1}{d})(\frac{n}{d}-R)}.
 $$
\end{lemma}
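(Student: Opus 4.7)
The plan is to input the bound from Proposition \ref{pro:sga} directly, and then control the resulting sum of the divisor-like function $C^{\omega(g)}$ over monic polynomials of fixed degree by a standard Dirichlet-series / divisor-type argument.

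First, since the summation in \eqref{eq:ST} is restricted to $g$ coprime to $m$, we have $|\gcd(g,m)|=1$ for every $g$ in the sum. Hence Proposition \ref{pro:sga} yields the pointwise bound
$$
|A(g)| \leq C^{\omega(g)} |g|^{-(1-\frac{1}{d})(\frac{n}{d}-R)}= C^{\omega(g)}\, q^{-T(1-\frac{1}{d})(\frac{n}{d}-R)},
$$
for every $g$ with $|g|=q^T$ appearing in $\mathcal{S}(T)$, where $C=C(d,n)>0$. Summing this gives
$$
\mathcal{S}(T)\leq q^{-T(1-\frac{1}{d})(\frac{n}{d}-R)}\sum_{\substack{\text{$g$ monic}\\ |g|=q^T}} C^{\omega(g)}.
$$

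Next I would estimate the sum $\sum_{|g|=q^T,\ g \text{ monic}} C^{\omega(g)}$. Let $k=\lceil C\rceil$. Since $C^{\omega(g)}\leq k^{\omega(g)}\leq \tau_k(g)$, where $\tau_k(g)=\sum_{g_1\cdots g_k=g}1$ is the $k$-fold divisor function in $\FF_q[t]$, it suffices to bound $\sum_{|g|=q^T} \tau_k(g)$. But every factorisation $g=g_1\cdots g_k$ into monic factors is determined by a $k$-tuple $(T_1,\dots,T_k)$ of non-negative integers with $T_1+\dots+T_k=T$, together with a choice of monic polynomial of each prescribed degree, giving
$$
\sum_{\substack{\text{$g$ monic}\\ |g|=q^T}} \tau_k(g)=\sum_{T_1+\dots+T_k=T} q^{T_1+\dots+T_k}=\binom{T+k-1}{k-1} q^T \ll_{k} T^{k-1} q^T\ll_{k,\ve} q^{T(1+\ve)},
$$
for any $\ve>0$. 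Combining this with the preceding display gives exactly
$$
\mathcal{S}(T)\ll_\ve q^{T(1+\ve)-T(1-\frac{1}{d})(\frac{n}{d}-R)},
$$
as required.

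I expect no real obstacle here: the non-trivial work has already been done in Proposition \ref{pro:sga} (which required the smooth complete intersection hypothesis and the Hensel-lifting / Deligne input via \eqref{eq:case1}). The only thing left is the elementary divisor-type estimate, and the coprimality condition $\gcd(g,m)=1$ conveniently kills the $|\gcd(g,m)|^{(1-1/d)n/d+R/d}$ factor, so no careful bookkeeping over divisors of $m$ is needed. The hypothesis $n>dR$ is only used insofar as it makes the exponent $(1-\frac{1}{d})(\frac{n}{d}-R)$ strictly positive, which is what will later guarantee convergence of the full singular series $\mathfrak{S}$ when this lemma is summed over $T$.
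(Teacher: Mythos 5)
Your argument is correct and is essentially the paper's proof: both apply Proposition \ref{pro:sga}, use the coprimality condition $\gcd(g,m)=1$ to drop the $|\gcd(g,m)|$ factor, and then absorb $C^{\omega(g)}$ into $|g|^\ve$ before summing over the $q^T$ monic polynomials of degree $T$. The only cosmetic difference is that you prove the divisor-type bound $C^{\omega(g)}\ll_\ve |g|^\ve$ by hand via $\tau_k$ and a composition count (which, usefully, makes its $q$-independence explicit), whereas the paper simply cites the known function-field divisor bound.
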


\begin{proof}
We shall use  Proposition \ref{pro:sga} to prove this result. First,
for any $\ve>0$,
it follows from the  divisor sum bound in function fields that
$C^{\omega(g)}=O_\ve(|g|^\ve)$, where the implied constant depends only on $\ve$ and $C$.  (Note that,
as explained in \cite[Lemma~2.3]{adelina},
 the implied constant in this estimate is independent of $q$.)
We deduce from Proposition \ref{pro:sga} that
\begin{align*}
\mathcal{S}(T)
&\ll_\ve
\sum_{\substack{g\in \FF_q[t]\\  \text{$g$ monic}\\ |g|=q^T}}
|g|^{-(1-\frac{1}{d})(\frac{n}{d}-R)+\ve} \ll_\ve
q^{T(1+\ve)-T(1-\frac{1}{d})(\frac{n}{d}-R)}.
\end{align*}
The statement of the lemma follows.
\end{proof}

The following result summarises our treatment of the singular series $\mathfrak{S}$ and its approximation by the truncated singular series $\mathfrak{S}(B)$.

\begin{lemma}
\label{lem2.5iv}
Let $d\geq 2$ and
assume that $(1-\frac{1}{d})(\frac{n}{d}-R)> 1$. Let $\ve>0$.
Suppose  that $\gcd(\b,m)=1$ and
$f_i(\b)\equiv 0 \bmod{m}$, for $1\leq i\leq R$.
Then, for any  $B\in \ZZ_{\geq 0}$, we have
\begin{align*}
|\mathfrak{S} - \mathfrak{S}(B)|
\ll_\ve~&
q^{R\deg m+(B+1)(1+\ve-(1-\frac{1}{d})(\frac{n}{d}-R))}.
\end{align*}
Moreover, $\mathfrak{S}$ is absolutely convergent and satisfies
$0<\mathfrak{S}\ll q^{R\deg m}$.
 The implied  constants in these estimates depend only on $d,n,R$ and $\ve$.
\end{lemma}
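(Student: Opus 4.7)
The plan is to estimate the difference $\mathfrak{S}-\mathfrak{S}(B)$ directly using Lemma~\ref{lem:toad}, then deduce absolute convergence and the upper bound as immediate corollaries, and finally handle positivity via an Euler product. Subtracting \eqref{eq:jerk} from \eqref{eq:jerky} gives
$$
\mathfrak{S}-\mathfrak{S}(B) = |m|^R\sum_{\substack{\text{$g$ monic}\\ |g|>q^B\\ \gcd(g,m)=1}} A(g).
$$
After a dyadic decomposition with $T=\log_q|g|$ and the triangle inequality, this is bounded in absolute value by $|m|^R\sum_{T>B}\mathcal{S}(T)$. Applying Lemma~\ref{lem:toad} reduces the task to summing a geometric series with ratio $q^{1+\ve-(1-\frac{1}{d})(\frac{n}{d}-R)}$; under the hypothesis $(1-\frac{1}{d})(\frac{n}{d}-R)>1$, one can choose $\ve>0$ so small that this ratio is $<1$. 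The geometric sum is then $O(q^{(B+1)(1+\ve-(1-\frac{1}{d})(\frac{n}{d}-R))})$, and multiplying by $|m|^R=q^{R\deg m}$ yields the first claim. Absolute convergence of $\mathfrak{S}$ is obtained by letting $B\to\infty$, the same bound controlling $\sum|A(g)|$ over arbitrarily high tails.

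For the upper bound $\mathfrak{S}\ll q^{R\deg m}$, I would observe from \eqref{eq:jerk} that $\mathfrak{S}(0)=|m|^R$, since only $g=1$ contributes and a direct check from \eqref{eq:def A} (or from Lemma~\ref{lem:snorlax}) gives $A(1)=1$. The estimate then follows from the decomposition $\mathfrak{S}=\mathfrak{S}(0)+(\mathfrak{S}-\mathfrak{S}(0))$ together with the difference estimate at $B=0$.

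Positivity $\mathfrak{S}>0$ is the most delicate step and the main obstacle. Using multiplicativity of $A(g)$ (from \cite[Lemma~4.7.2]{lee}) together with the absolute convergence just established, one factorises
$$
\mathfrak{S}=|m|^R\prod_{\pi\nmid m}\sigma_\pi,\qquad \sigma_\pi=\sum_{e\geq 0}A(\pi^e).
$$
A telescoping argument built from the identity \eqref{eq:A-step1} identifies the partial sum $\sum_{e=0}^{E}A(\pi^e)$ with $|\pi|^{-E(n-R)}N(\pi^E)$, so that $\sigma_\pi$ is precisely the $\pi$-adic density of the affine variety $f_1=\dots=f_R=0$. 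The obstacle is to verify $\sigma_\pi>0$ for every prime $\pi\nmid m$: for all $\pi$ with $|\pi|$ sufficiently large this follows from the Lang--Weil/Deligne estimate \eqref{eq:case1} combined with Hensel lifting from smooth $\FF_\pi$-points, whereas for the finitely many residual primes one exploits the hypothesis that $X$ is smooth over the \emph{constant} field $\FF_q$: the reduction $X\times_{\FF_q}\FF_{q^{\deg \pi}}$ is again a smooth complete intersection of the same discrete type, so existence of smooth $\FF_\pi$-points in the Fano range $n>dR$ can be extracted either via Ax--Katz divisibility or via base change of an $\FF_q$-point. Combined with absolute convergence of the Euler product (controlled by Proposition~\ref{pro:sga}), this yields $\mathfrak{S}>0$.
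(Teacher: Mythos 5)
Your tail estimate is exactly the paper's argument: both pass from \eqref{eq:jerk}--\eqref{eq:jerky} to $|\mathfrak{S}-\mathfrak{S}(B)|\leq q^{R\deg m}\sum_{k>B}\mathcal{S}(k)$, apply Lemma~\ref{lem:toad}, and sum the geometric series (with the harmless convention that proving the bound for small $\ve$ gives it for all $\ve$, since the exponent increases in $\ve$); the upper bound $\mathfrak{S}\ll q^{R\deg m}$ is likewise the same estimate in a different wrapper, your $\mathfrak{S}(0)=|m|^R$ plus the $B=0$ case being equivalent to the paper's ``reapply Lemma~\ref{lem:toad}''. Where you genuinely diverge is positivity. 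The paper outsources this step: it quotes Lee's Corollary~4.7.7, which reduces $\mathfrak{S}>0$ to the existence of a smooth $\mathcal{O}_\pi$-point for every finite prime $\pi\nmid m$, and then settles local solubility by a quasi-algebraic-closure remark about the completions. You instead rebuild that machinery from scratch: multiplicativity of $A$ plus absolute convergence give the Euler product $\mathfrak{S}=|m|^R\prod_{\pi\nmid m}\sigma_\pi$, the telescoping of \eqref{eq:A-step1} identifies $\sigma_\pi$ with the local density, and you get $\sigma_\pi>0$ from a nonzero point of the reduction modulo $\pi$ (Chevalley--Warning/Ax--Katz in the range $n>dR$, which the hypothesis $(1-\frac1d)(\frac nd-R)>1$ supplies), smoothness of $X$ being preserved under base change to the residue field $\FF_{q^{\deg\pi}}$, and Hensel lifting as in \eqref{eq:Hensel}, which indeed gives the uniform lower bound $\sigma_\pi\geq |\pi|^{-(n-R)}$ needed so that the absolutely convergent product is nonzero. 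This route is longer but self-contained and makes the local solubility completely explicit, whereas the paper's citation is shorter; your alternative justification ``base change of an $\FF_q$-point'' should be dropped (it presupposes $X(\FF_q)\neq\emptyset$, which is not part of the hypotheses), but the Chevalley--Warning branch you offer carries the argument on its own, so there is no gap.
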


\begin{proof}
It follows from \eqref{eq:jerk} and \eqref{eq:jerky} that
\begin{align*}
|\mathfrak{S} - \mathfrak{S}(B)|
&\leq q^{R\deg m}
\sum_{k=B+1}^\infty \mathcal{S}(k),
\end{align*}
in the notation of \eqref{eq:ST}.
Since $(1-\frac{1}{d})(\frac{n}{d}-R)> 1$,  Lemma \ref{lem:toad} yields
\begin{align*}
|\mathfrak{S} - \mathfrak{S}(B)|
&\ll_\ve q^{R\deg m}
\sum_{k = B+1}^{\infty}
q^{k(1+\ve)-k(1-\frac{1}{d})(\frac{n}{d}-R)}\\
&\ll_\ve
q^{R\deg m+(B+1)(1+\ve-(1-\frac{1}{d})(\frac{n}{d}-R))}.
\end{align*}
The first part of the lemma readily follows.
Next, we may reapply
Lemma \ref{lem:toad} to deduce the upper bound
$\mathfrak{S}\ll q^{R\deg m}$.

It remains to establish the positivity of $\mathfrak{S}$ under the assumptions of the lemma, having  already established  that $\mathfrak{S}$ is absolutely convergent.
The argument in  \cite[Corollary~4.7.7]{lee} ensures that
$\mathfrak{S}>0$
provided that  there exists a smooth $\mathcal{O}_\pi$-point on
the variety $f_1=\cdots=f_R=0$,
for every finite prime $\pi\nmid m$. But this follows from the assumptions of the lemma, since
the completion of $\FF_q(t)$ at $\pi$ is a $C_1$ field.
\end{proof}

\subsection{Singular integral}

For any  $\boldsymbol{\gamma} \in K_{\infty}^R$, we define
\begin{equation}\label{eq:D2}
S_{\infty} (\bga)
=
\int_{\TT^n} \psi (\bga. \mathbf{f}(\v)) \d \v.
\end{equation}
In analogy to \eqref{eq:ST}, let
\begin{equation}\label{eq:IT}
\mathcal{I}(T)
=
\int_{|\bga|= q^{T}} S_{\infty} (\bga) \d\bga,
\end{equation}
for any  $T \in \ZZ_{\geq 0}$.
We begin by proving the following upper bound.

\begin{lemma}\label{lem:est2}
Let $d\geq 2$ and assume that $n\geq dR$.  For any  $T \in \ZZ_{\geq 0}$ we have
$$
\mathcal{I}(T)\ll \min\left\{q^{R(T+1)}, q^{(1-\frac{2}{d})n+R-(1-\frac{1}{d})(\frac{n}{d}-R)T}\right\}.
 $$
\end{lemma}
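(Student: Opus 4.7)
I would separate the proof into the two bounds appearing in the minimum, since each requires a different argument.

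\textbf{Trivial bound.} Pointwise $|S_\infty(\bga)|\leq 1$, because $|\psi|=1$ and $\meas(\TT^n)=1$. The shell $\{\bga\in K_\infty^R: |\bga|=q^T\}$ is contained in the ball $\{|\bga|<q^{T+1}\}$, which has measure $q^{R(T+1)}$. This immediately yields $|\mathcal{I}(T)|\leq q^{R(T+1)}$, the first bound in the minimum.

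\textbf{Non-trivial bound.} For the second bound the plan is to establish a pointwise Weyl-type estimate on $|S_\infty(\bga)|$ and then integrate. I would apply $d-1$ rounds of the function-field Weyl differencing trick to $S_\infty(\bga)=\int_{\TT^n}\psi(\bga.\f(\v))d\v$, at each step squaring and translating $\v\mapsto\v+\u^{(j)}$; after $d-1$ iterations the exponent is linear in the last copy of $\v$, so the orthogonality relation \eqref{eq:ortho} collapses the $\v$-integral to an indicator and produces
\[
|S_\infty(\bga)|^{2^{d-1}}\leq \meas\Bigl\{\underline{\u}\in\TT^{(d-1)n}: \Bigl|\sum_{k=1}^R\beta^{(k)}\Psi_i^{(k)}(\underline{\u})\Bigr|<1,\ 1\leq i\leq n\Bigr\},
\]
with the multilinear polarizations $\Psi_i^{(k)}$ of \eqref{eq:multi}. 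To bring the right-hand side into the shape of the auxiliary count $N^{\textnormal{aux}}$ from \eqref{eq:rabbit}, I would rescale $\underline{\u}=t^{-J}\underline{\w}$ for a parameter $J$ to be chosen, reinterpreting the measure as $q^{-Jn(d-1)}N^{\textnormal{aux}}(J;\bbe)$ for a rescaled $\bbe$ proportional to $\bga$. The parameter $J$ is tuned so that $|\bbe|=q^M$ with $M\geq d-2$, making Theorem~\ref{t:Nbf} applicable. Invoking $\sigma\leq R-1$ from Remark~\ref{rem:sigma-f} then gives a pointwise bound $|S_\infty(\bga)|^{2^{d-1}}\ll q^{-\kappa(T)}$ for an explicit exponent $\kappa(T)$ linear in $T$.

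\textbf{Integration and bookkeeping.} Finally, integrating the pointwise bound over the shell of measure $q^{R(T+1)}$ and keeping track of how the Weyl saving combines with the volume and with the rescaling factor $q^{-Jn(d-1)}$ should reproduce exactly $q^{(1-2/d)n+R-(1-1/d)(n/d-R)T}$. The main obstacle is not conceptual but combinatorial: one has to verify that the choice of $J$ making $M\geq d-2$ is compatible with the relevant range of $T$, and that the exponents produced by Theorem~\ref{t:Nbf} line up precisely with the target. The only geometric input beyond the Weyl machinery is the smoothness of $X$, which enters solely through the estimate $\sigma\leq R-1$.
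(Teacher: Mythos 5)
Your trivial bound is fine, but the Weyl-differencing route you outline for the second bound cannot reach the exponent claimed in the lemma, and the mismatch is not mere bookkeeping. Follow your own plan to the end: after $d-1$ differencings and the rescaling $\underline{\uu}=t^{-J}\underline{\w}$ one gets, at best, $|S_\infty(\bga)|^{2^{d-1}}\leq q^{-J(d-1)n}N^{\textnormal{aux}}(J;\bga t^{-J})$, and Theorem~\ref{t:Nbf} with $\sigma\leq R-1$ saves at most $q^{-\min\{\frac{T}{d-1},J\}(n-R+1)}$. Taking the optimal $J\approx T/(d-1)$ and extracting the $2^{d-1}$-th root, the pointwise decay is $q^{-\frac{(n-R+1)T}{(d-1)2^{d-1}}}$, so after integrating over the shell of measure $q^{R(T+1)}$ your method yields a decay rate in $T$ of $\frac{n-R+1}{(d-1)2^{d-1}}-R$, whereas the lemma asserts the rate $(1-\frac{1}{d})(\frac{n}{d}-R)$. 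For $d\geq 3$ one has $\frac{1}{(d-1)2^{d-1}}<\frac{d-1}{d^2}$, so your rate is strictly smaller than the claimed one for every $n$ and $R$ (e.g.\ for $d=3$: $\frac{n}{8}$ versus $\frac{2n}{9}$ in the leading term); even for $d=2$ it only suffices when $n\geq 4R-2$, which the lemma's hypothesis $n\geq dR$ does not guarantee. A secondary defect: Theorem~\ref{t:Nbf} carries factors $(d-1)^{cnJ}$, which with $J\sim T/(d-1)$ grow unboundedly in $T$; absorbing them forces an assumption like $q\geq(d-1)^n$, which the lemma does not make.

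The reason the stated exponent is attainable is that no Weyl differencing is needed here: since the $f_i$ have coefficients in the constant field, the paper evaluates $\mathcal{I}(T)$ directly. Writing $m(S)=\meas\{\v\in\TT^n:|f_i(\v)|<q^{-S}\ \forall i\}$, orthogonality \eqref{eq:ortho} gives $\mathcal{I}(T)=q^{R(T+1)}\left(m(T+1)-q^{-R}m(T)\right)$, and the substitution $\tau=t^{-1}$ identifies $m(S)$ for $S\geq d$ with $q^{(d-1-S)n}N(t^{S+1-d})$, a count of solutions modulo a power of $t$. The difference of counts is then exactly $q^{(T+2-d)(n-R)}A(t^{T+2-d})$, and Lemma~\ref{lem:111} --- Hensel lifting above smooth points plus Deligne's bound at level one --- gives $A(t^{e})\ll q^{-(1-\frac{1}{d})(\frac{n}{d}-R)e}$, which is precisely where the exponent comes from, with constants depending only on $d,n,R$ and no largeness assumption on $q$. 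If you want to salvage your approach, you would have to replace the Weyl step by this exact-evaluation-plus-Hensel argument; the $2^{d-1}$ loss inherent in differencing is what kills the stated bound.
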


\begin{proof}
Let $S\in \ZZ$ and  define
$$
m(S)=
\meas\left\{
\v\in \TT^n:  |f_i(\v)|<q^{-S} \text{ for $1\leq i\leq R$}\right\}.
$$
We claim  that
\begin{equation}\label{eq:claim-ms}
m(S)=
\begin{cases}
1 &\text{ if $S<d$,}\\
q^{(d-1-S)n}N(t^{S+1-d}) &\text{ if $S\geq d$,}
\end{cases}
\end{equation}
where 
$$
N(\pi^e)=\#\left\{\z\in \FF_q[t]^n: |\z|<|\pi^e|,~ f_i(\z)\equiv 0\bmod{\pi^e} \text{ for $1\leq i\leq R$}\right\},
$$
for any prime $\pi\in \FF_q[t]$.

Taking \eqref{eq:claim-ms} on faith for the moment, let us see how it suffices to complete the proof of the lemma.  Taking the trivial upper bound $|S_\infty(\bga)|\leq 1$, we see that $|\mathcal{I}(T)|\leq q^{R(T+1)}$. This is less than or equal to
$q^{n-(1-\frac{1}{d})(\frac{n}{d}-R)T}$ if
$T\leq  d-1$.
Assume henceforth that $T\geq d$.
It follows from \eqref{eq:ortho}  and \eqref{eq:claim-ms} that
\begin{align*}
\mathcal{I}(T)
&= q^{R(T+1)}\left(m(T+1)-q^{-R}m(T)\right)\\
&= q^{R(T+1)+(d-2-T)n}
\left(N(t^{T+2-d})- q^{n-R}
N(t^{T+1-d})
\right).
\end{align*}
Taking $\pi=t$ and $e=T+2-d$, it now follows from \eqref{eq:A-step1} and
Lemma \ref{lem:111} that
\begin{align*}
N(t^{T+2-d})- q^{n-R}
N(t^{T+1-d})
&= q^{(T+2-d)(n-R)}A(t^{T+2-d})\\
&\ll q^{(T+2-d)(n-R)}\cdot
q^{-(1-\frac{1}{d})(\frac{n}{d}-R)(T+2-d)}.
\end{align*}
Hence
\begin{align*}
\mathcal{I}(T)
&\ll  q^{R(T+1)+(d-2-T)n}\cdot
q^{(T+2-d)(n-R)}\cdot
q^{-(1-\frac{1}{d})(\frac{n}{d}-R)(T+2-d)}\\
&=  q^{(d-1)R-(1-\frac{1}{d})(\frac{n}{d}-R)(T+2-d)}\\
&=  q^{g-(1-\frac{1}{d})(\frac{n}{d}-R)T},
\end{align*}
where
$$
g=(d-1)R+(d-2)\left(1-\frac{1}{d}\right)\left(\frac{n}{d}-R\right)\leq \left(1-\frac{2}{d}\right)n+R,
$$
on taking  $1-\frac{1}{d}\leq 1$.

We now turn to the proof of  \eqref{eq:claim-ms}.
If $S<d$ then the condition
$|f_i(\v)|<q^{-S}$ is vacuous for $1\leq i\leq R$.
In this case  $m(S)=1$. Suppose next that  $S\geq d$. Then
\begin{align*}
m(S)
&=
\int_{\substack{\w\in \TT^n\\  |\w|<q^{-S}}}
\hspace{-0.2cm}
\#\left\{(\mathbf u_1,\dots,\mathbf u_S)\in \FF_q^{nS} : |f_i(t^{-S}\mathbf u_1+\dots +t^{-1} \mathbf u_{S}+\w)|<q^{-S}\right\}\d \w\\
&=q^{-Sn}
\#\left\{(\mathbf u_1,\dots,\mathbf u_S)\in \FF_q^{nS} : |f_i(t^{-S}\mathbf u_1+\dots +t^{-1} \mathbf u_{S})|<q^{-S}\right\},
\end{align*}
since the integrand  doesn't depend on $\w$.
Let $\mathbf u=t^{-S}\mathbf u_1+\dots +t^{-1} \mathbf u_{S}$ be
one of the vectors appearing in the latter cardinality and suppose that
$$
f_i(\uu)=A_0+A_1t^{-1}+\dots+A_Nt^{-N},
$$
for some $A_0,\dots,A_N\in \FF_q$ and  $N\geq 0$. Then the condition $|f_i(\uu)|<q^{-S}$ is equivalent to demanding that $A_0=\dots=A_S=0$.
We now consider the effect of writing  $t^{-1}=\tau$ and working over $\FF_q[\tau]$.
Then $\mathbf u$ becomes $\uu'=\tau^{S}\mathbf u_1+\dots +\tau \mathbf u_{S}$
and $f_i(\uu)$ becomes
$f_i(\uu')=A_0+A_1\tau+\dots+A_N\tau^N$, with the condition
$A_0=\dots=A_S=0$ now being equivalent to $f_i(\uu')\equiv 0\bmod{\tau^{S+1}}$. Hence, on recalling that $S\geq d$,  it follows that
\begin{align*}
m(S)
&=q^{-Sn}\#\left\{
\uu\in \FF_q[\tau]^n: |\uu|<q^S, ~f_i(\tau \uu)\equiv 0 \bmod{\tau^{S+1}} \text{ for $1\leq i\leq R$}
\right\}\\
&=q^{-Sn}\#\left\{
\uu\in \FF_q[\tau]^n: |\uu|<q^S, ~f_i(\uu)\equiv 0 \bmod{\tau^{S+1-d}} \text{ for $1\leq i\leq R$}
\right\}.
\end{align*}
Note that $S>S+1-d$, since $d\geq 2$.
Breaking the $\uu$ into residue classes modulo $\tau^{S+1-d}$,
the claim \eqref{eq:claim-ms} easily follows, which thereby completes the proof of the lemma.
\end{proof}

We define the truncated  singular integral
\begin{equation}\label{def si-t}
\mathfrak{I}( B )
=
\int_{|\bga| < q^{B}} S_{\infty} (\bga) \d\bga,
\end{equation}
for any $B \in \ZZ_{\geq 0}$, together with the completed {\em singular integral}
\begin{equation}
\label{def si}
\mathfrak{I} =  \int_{ K_{\infty}^R } S_{\infty}(\bga) \d \bga.
\end{equation}
We are now ready to compare these two quantities, as summarised in the following result.

\begin{lemma}
\label{si}
Let $d\geq 2$ and assume that $n\geq dR$.
Let $B\geq 0$.
Then
$$
| \mathfrak{I}
- \mathfrak{I}(B)|
\ll
q^{n-(1-\frac{1}{d})(\frac{n}{d}-R)B}.
$$
Moreover, $\mathfrak{I}$ is absolutely convergent and satisfies
 $
0< \mathfrak{I} \ll q^n
 $.
 The implied  constants in these estimates depend only on $d, n, R$.
\end{lemma}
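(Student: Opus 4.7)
The argument is the singular-integral analogue of Lemma \ref{lem2.5iv}, with Lemma \ref{lem:est2} playing here the role that Proposition \ref{pro:sga} played for the singular series. The strategy is to partition the complement $K_\infty^R\setminus\{\bga:|\bga|<q^B\}$ into dyadic shells $\{|\bga|=q^T\}$ for $T\geq B$, yielding the formal identity
$$
\mathfrak{I} - \mathfrak{I}(B) = \sum_{T \geq B} \mathcal{I}(T).
$$
Inserting the second bound from Lemma \ref{lem:est2} produces a geometric series in $T$ with ratio $q^{-(1-\frac{1}{d})(\frac{n}{d}-R)}$, and summing gives
$$
\sum_{T\geq B}|\mathcal{I}(T)| \ll q^{(1-\frac{2}{d})n+R-(1-\frac{1}{d})(\frac{n}{d}-R)B}.
$$
The crude inequality $(1-\tfrac{2}{d})n+R\leq n$, which is equivalent to $n\geq dR/2$ and hence holds under the assumption $n\geq dR$, then simplifies this to the asserted estimate for $|\mathfrak{I}-\mathfrak{I}(B)|$.

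Specialising to $B=0$ yields the absolute convergence of $\mathfrak{I}$, and combining with the trivial bound $|\mathfrak{I}(0)|\leq 1$ (a consequence of $|S_\infty(\bga)|\leq 1$ applied on the compact set $\{|\bga|<1\}$) produces the upper bound $\mathfrak{I}\ll q^n$. For the positivity claim, I would apply orthogonality \eqref{eq:ortho} inside \eqref{def si-t} to re-interpret the truncated singular integral as a non-negative local density:
$$
\mathfrak{I}(B) = q^{RB}\,\meas\{\v\in\TT^n: |f_i(\v)|<q^{-B}\text{ for }1\leq i\leq R\},
$$
which by \eqref{eq:claim-ms} equals $q^{R(d-1)}\cdot q^{-(n-R)k}N(t^k)$ with $k=B+1-d$ for $B\geq d$. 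Passing to the limit $B\to\infty$, the Hensel-lifting relation \eqref{eq:Hensel} stabilises $q^{-(n-R)k}N^*(t^k)$ to a constant, and the limit is strictly positive as soon as the affine cone over $X$ admits a smooth $\FF_q((t^{-1}))$-point inside $\TT^n$, by the same reasoning as in \cite[Corollary~4.7.7]{lee} at the place $\infty$.

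The truncation estimate and the upper bound are immediate from Lemma \ref{lem:est2} via dyadic summation; the main delicate point is the positivity assertion, where one must verify that the local density at $\infty$ does not degenerate. Since the complete intersection is assumed smooth and $K_\infty$ is a $C_1$ field, this reduces to a standard local solubility check, analogous to the argument already invoked for $\mathfrak{S}>0$ in Lemma \ref{lem2.5iv}.
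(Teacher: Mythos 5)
Your proposal matches the paper's own proof of the main assertions: the bound on $|\mathfrak{I}-\mathfrak{I}(B)|$, the absolute convergence and the upper bound $\mathfrak{I}\ll q^n$ are obtained exactly as in the paper, by summing Lemma \ref{lem:est2} over the shells $|\bga|=q^T$ with $T\geq B$ (resp.\ $T\geq 0$) and using $(1-\frac{2}{d})n+R\leq n$ under $n\geq dR$. For the positivity the paper simply cites \cite[Lemma~4.8.3]{lee} together with the $C_1$ property of $K_\infty$, which is the same local-solubility input that your density/Hensel sketch (via $\mathfrak{I}(B)=q^{RB}\meas\{\v\in\TT^n:|f_i(\v)|<q^{-B}\}$) ultimately defers to, so the two arguments are essentially the same.
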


\begin{proof}
We clearly have $(1-\frac{2}{d})n+R\leq n$ if $n\geq dR$.
It follows from Lemma \ref{lem:est2} that
\begin{align*}
| \mathfrak{I} - \mathfrak{I}(B) |
&\leq
\sum_{j \geq  B}
|\mathcal{I}(j)|\ll
q^{n-(1-\frac{1}{d})(\frac{n}{d}-R)B},
\end{align*}
as claimed in  the first part of the lemma.
The second part follows on taking
$$
|\mathfrak{I}|
\leq
\sum_{j \geq 0}
|\mathcal{I}(j)|
$$
and reapplying Lemma \ref{lem:est2}.
Finally, the last part follows from
\cite[Lemma~4.8.3]{lee}, using the fact that
$K_\infty$ is a $C_1$ field.
\end{proof}

\section{Application of the circle method}\label{s:apply}

Throughout this section, let $\FF_q$ be a finite field of characteristic $>d$ and let
$f_1,\dots,f_R\in \FF_q[x_1,\dots,x_n]$ be forms of degree $d$, cutting out a smooth complete intersection in $\PP^{n-1}$. In particular it follows from Remark \ref{rem:sigma}
that $\sigma\leq R-1$, in the notation of \eqref{eq:Jak}.

Let $m \in \FF_q[t] \setminus \{ 0 \}$ and $\b \in \FF_q[t]^{n}$ with $\deg b_i < \deg m$ and
$\gcd(\b,m)=1$.
Our primary goal in this section is to adapt the work of Rydin
Myerson \cite{simon} to
produce an estimate for the quantity
\begin{equation}\label{eq:defn-NP}
N(\mathbf{f};P,m,\b)=
\#\left\{ \g\in \FF_q[t]^n:
\begin{array}{l}
 |\g|<q^{P+\deg m} \text{ and } \g\equiv \b \bmod{m}\\
f_1(\g)=\dots=f_R(\g)=0
\end{array}
\right\},
\end{equation}
for any $P\in \NN$. It will be crucial to have an estimate that depends explicitly on $P$ and on $q$.

\subsection{Fourier analytic interpretation}

Recall the definition \eqref{eq:defF} of
$$
\mathbf{F}(\x) = \mathbf{f}(m \x + \b),
$$
where we write $\mathbf{f}=(f_1,\dots,f_R)$ for the vector of polynomials, and similarly for $\mathbf F$.
Letting
\begin{equation}\label{eq:sum-S}
S(\bal;P) = S(\bal; P , m, \b) = \sum_{  |\x| < q^{P}  } \psi (\bal. \mathbf{F}(\x)),
\end{equation}
for $\bal\in \TT^R$ and $P\in \NN$, we may now write
\begin{equation}\label{orthog}
N(\mathbf{f};P,m,\b)= \int_{\TT^R} S (\bal; {P}) \d \bal.
\end{equation}

Let $\mathcal{C}$ be an arbitrary real parameter that satisfies
$\mathcal{C}>d R$. (We will make an acceptable choice of  $\mathcal{C}$ in  \eqref{setcalC}.)
We will need to work under the following assumption. Given $\bal \in K_\infty^R$, we put $\| \bal  \| = |\{ \bal \}|$, where 
$\{\bal\} \in \TT^R$ is the fractional part of $\bal$. 

\begin{hyp}\label{hyp:2.1}
There exists $C\geq 1$ such that
$$
\min \left\{ \frac{|S(\bal; {P})|}{q^{nP}}  ,   \frac{|S(\bal + \bbe; {P})|}{q^{nP}}   \right\}
\leq C
\max \{  q^{-dP+d-1} \|m^d \bbe \|^{-1}, \|m^d\bbe\|^{\frac{1}{d-1}}q^{-1}  \}^{\mathcal{C}},
$$
for any $\bal, \bbe \in K_{\infty}^R$ and any $P \in \NN$.
\end{hyp}

The following result is a  function field analogue of \cite[Prop.~2.1]{simon}, with the added feature
that  the dependence on $q$ is made precise.

\begin{theorem}
\label{p2.1}
Let $f_1, \ldots, f_R \in \FF_q[x_1, \ldots, x_n]$ be forms of degree $d\geq 2$.
Let $\mathcal{C} > dR$ and suppose that Hypothesis \ref{hyp:2.1} holds for $C\geq 1$.
Assume that
$P>R(d-1) + d\deg m$ 
and $(1-\frac{1}{d})(\frac{n}{d}-R)>1$.
Then
$$
N(\mathbf{f};P,m,\b)= \mathfrak{S} \mathfrak{I} q^{(n - dR)P-dR\deg m}+
O_\ve\left(E\right),
$$
for any $\ve>0$,
with
\begin{align*}
E=~& q^{(n-\mathcal{C})P}
+
q^{1+dR(1 + \deg m) +\delta_0(1+d\deg m)(1-\frac{dR}{\mathcal{C}})+ (n - dR - \delta_1) P}\\
&\quad +
q^{(n - d R-\delta_2+\ve)P -( d-1) R \deg m+n 
+\delta_2(1+d\deg m+R(d-1))},
\end{align*}
and where
\begin{align}
\label{def del0}
\delta_0 &=
\frac{n - \sigma_\f}{ (d-1)2^{d-1}R  },\\
\label{def del1}
\delta_1& = \delta_0 \left( 1 - \frac{dR}{\mathcal{C}}  \right),
\end{align}
and  $\delta_2=
\left(1-\frac{1}{d}\right)\left(\frac{n}{d}-R\right)-1$.
Moreover, the singular series $\mathfrak{S}$ is defined in \eqref{def ss} and the singular integral $\mathfrak{I}$ in \eqref{def si}.
Finally,  the implicit constant depends only on
 $d, n, R, \mathcal{C},C$ and $\ve$.
\end{theorem}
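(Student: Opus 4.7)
The plan is to evaluate $N(\f;P,m,\b) = \int_{\TT^R} S(\bal;P)\, d\bal$ from \eqref{orthog} by a Farey-style dissection of $\TT^R$ into major and minor arcs, adapting Rydin Myerson's argument \cite{simon} to the function field setting. The major arcs will be taken as a union of small boxes centred at fractions $\a/(gm^d)$ with $g \in \FF_q[t]$ monic of bounded degree, $\gcd(\a,g)=1$ and $|\a| < |gm^d|$; the minor arcs form the complement. The three error terms in $E$ will correspond, respectively, to (i) a blanket pointwise bound coming from Hypothesis~\ref{hyp:2.1} in the extreme minor-arc range; (ii) a refined minor-arc estimate combining Hypothesis~\ref{hyp:2.1} with the box-covering argument of Section~\ref{boxes}; and (iii) the cost of replacing the truncated singular series and integral by their completions.

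For the major-arc contribution, I would write $\bal = \a/(gm^d) + \bbe$ with $|\bbe|$ suitably small. Splitting $\x$ modulo $gm^d$ in \eqref{eq:sum-S} and applying the orthogonality property from Section~\ref{s:add-characters} yields, up to a permissible error, the classical factorisation
$$
S(\bal;P) \approx q^{nP}|gm^d|^{-n}\, S_{gm^d}(\a) \cdot T_\infty(\bbe),
$$
where $T_\infty$ reduces via an explicit change of variables to a rescaled copy of $S_\infty$ from \eqref{eq:D2}. Integrating $\bbe$ over its box and summing $(g,\a)$ in the major-arc region then produces $\mathfrak{S}(B)\,\mathfrak{I}(B')\, q^{(n-dR)P - dR\deg m}$ for suitable truncation parameters $B, B'$. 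Lemmas~\ref{lem2.5iv} and~\ref{si} convert these truncated objects into the full $\mathfrak{S}\mathfrak{I}$; the cost is exactly the third error term in $E$, controlled by the saving $\delta_2 = (1-\tfrac{1}{d})(\tfrac{n}{d}-R)-1$, which is positive by hypothesis.

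On the minor arcs, Hypothesis~\ref{hyp:2.1} is the decisive input. I would cover the relevant portion of $\TT^R$ by boxes of side $q^{-dP+d-1}$ using Section~\ref{boxes}, and then decompose them dyadically according to $\|m^d \bbe\|$, where $\bbe$ measures the distance to the nearest Farey fraction. For a representative $\bal$ in each box, Hypothesis~\ref{hyp:2.1} forces either $|S(\bal;P)|$ or $|S(\bal+\bbe;P)|$ to be at most $C q^{nP}\max\{q^{-dP+d-1}\|m^d\bbe\|^{-1},\|m^d\bbe\|^{1/(d-1)}q^{-1}\}^{\mathcal{C}}$. Summing over dyadic shells and balancing the two competing terms in this maximum leads to the middle error term: the parameter $\delta_0$ from \eqref{def del0} contributes the per-shell gain driven by $\sigma_\f$ (through the auxiliary inequality Theorem~\ref{t:Nbf} implicitly embedded in Hypothesis~\ref{hyp:2.1}), and the trade-off between the number of shells and the pointwise saving produces $\delta_1 = \delta_0(1-dR/\mathcal{C})$ as in \eqref{def del1}. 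The extreme range where $\|m^d\bbe\| \asymp 1$ gives the first error term $q^{(n-\mathcal{C})P}$.

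The main obstacle will be keeping the dependence on $q$ and $\deg m$ fully explicit. Over $\QQ$, \cite{simon} absorbs constants into $\ll$-notation, but here every $q$- and $|m|$-power must be tracked, since this theorem feeds into Theorems~\ref{t:simon} and~\ref{thm:low}, both of which require quantitative lower bounds on $q$. The factor $q^{1+dR(1+\deg m) + \delta_0(1+d\deg m)(1-dR/\mathcal{C})}$ in the middle error arises precisely from counting Farey fractions (roughly $q^{dR(1+\deg m)}$ of them) together with the accumulated cost of the box-covering iterations weighted by the balancing factor $1-dR/\mathcal{C}$. Beyond this careful book-keeping of $q$-factors through the dissection and the minor-arc estimate, no essentially new ideas beyond a function-field translation of \cite[Prop.~2.1]{simon} should be needed.
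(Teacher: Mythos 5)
Your overall architecture (major/minor dissection at fractions $\a/(gm^d)$, the factorisation $S(\a/(gm^d)+\bbe;P)=q^{nP}S_{gm^d}(\a)S_\infty(m^dt^{dP}\bbe)$ on the major arcs, completion of the truncated $\mathfrak{S}(B)$ and $\mathfrak{I}(B)$ via Lemmas \ref{lem2.5iv} and \ref{si} giving the $\delta_2$-term, and a level-set/box-covering argument driven by Hypothesis \ref{hyp:2.1} on the minor arcs) matches the paper. But there is a genuine gap in your minor-arc treatment: you assert that the saving $\delta_0=(n-\sigma_\f)/((d-1)2^{d-1}R)$ is ``implicitly embedded in Hypothesis \ref{hyp:2.1}'' and can be extracted by balancing dyadic shells in $\|m^d\bbe\|$. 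It cannot. Hypothesis \ref{hyp:2.1} is a two-point statement bounding $\min\{|S(\bal;P)|,|S(\bal+\bbe;P)|\}$; it only constrains the \emph{measure} of the set where $|S|$ is large (this is exactly what Lemma \ref{lem2.2} converts into an integral bound, at the cost of the factor $1-\frac{dR}{\mathcal{C}}$), and it makes no reference whatsoever to $\sigma_\f$. To feed Lemma \ref{lem2.2} you need a separate \emph{pointwise} supremum bound on the minor arcs of the shape $\sup_{\bal\in\mathfrak{m}(\Delta(J))}|S(\bal;P)|\ll q^{nP-(d-1)R\delta_0 J}$, and this is where the missing ingredient lies: a Weyl-differencing dichotomy (the paper's Lemmas \ref{lem3.5} and \ref{lem2.5}, via the shrinking lemma and the rank argument on the matrix of values of the multilinear forms $\Psi_i^{(k)}$), in which either one obtains a rational approximation forcing $\bal$ into the major arcs of level $R(d-1)(J-1)$, or else a nontrivial linear combination $\h.\f$ degenerates and the count is controlled by $\dim V_{\h.\f}\le\sigma_\f$ through the bound $\#\mathcal{M}_{\h.\f}(J)\ll q^{((d-2)n+\sigma_\f)J}$. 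This dichotomy is also what fixes the level of the major arcs (and hence the factors $1+d\deg m$ appearing in the second and third error terms), something your proposal leaves unspecified. Note also that Theorem \ref{t:Nbf} plays no role in the proof of Theorem \ref{p2.1} itself; it is used only later, in Section \ref{sec4}, to verify Hypothesis \ref{hyp:2.1} for the particular choice of $\mathcal{C}$ in \eqref{setcalC}.

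Concretely, without the unconditional Weyl-type supremum bound your dyadic-shell argument yields at best the terms $q^{(n-\mathcal{C})P}$ and a contribution governed by $\mathcal{C}$ alone near the major arcs, but not the exponent $(n-dR-\delta_1)P$ with $\delta_1=\delta_0(1-\frac{dR}{\mathcal{C}})$: the $\sigma_\f$-dependence has to enter through the geometry of the system $\f$ (the affine dimension argument behind \eqref{g inv}), not through the hypothesis on $S$. Adding the shrinking-lemma step and the rank dichotomy, and then choosing $J$ so that $R(d-1)(J-1)\le P-1-d\deg m$ (which is where the assumption $P>R(d-1)+d\deg m$ is used), would close the gap and recover the stated error term.
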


For each integer $J$ in the range $0\leq J\leq P$, define
\begin{equation}\label{defn major arc}
\mathfrak{M}(J)=\mathfrak{M}_m(J;P)=
\hspace{-0.2cm}
\bigcup_{\substack{g\in \FF_q[t] \text{ monic}\\
0<|g|\leq q^J
}}
\bigcup_{\substack{\a\in \FF_q[t]^R\\
|\a|<|gm^d|\\ \gcd(\a,g)=1}} \left\{\bal\in \TT^R: \left|\bal-\frac{\a}{gm^d}\right|<\frac{q^{J-d
P+d}}{|gm^d|}\right\}
\end{equation}
to be the {\em major arcs
 of level $J$}.
The corresponding {\em minor arcs} are defined to be
$$
\mathfrak{m} (J) = \TT^R \setminus \mathfrak{M} ( J ).
$$
With this notation we may break the integral   \eqref{orthog} into major and minor arcs, obtaining
\begin{equation}
\label{NNN}
N(\mathbf{f};P,m,\b) =
\int_{\mathfrak{M}(\Delta(J))} S(\bal;P) \d \bal +
\int_{\mathfrak{m}(\Delta(J))} S(\bal;P) \d \bal.
\end{equation}
We claim that the major arcs of level $J$ are non-overlapping if
\begin{equation}\label{eq:overlap}
J\leq \frac{d(P-1)}{2}.
\end{equation}
To see this, if we have overlapping arcs associated to distinct  $\frac{\a}{gm^d}$ and
$\frac{\a'}{g'm^d}$, then we would be able to deduce that
$$
\frac{1}{|m|^d|gg'|}\leq \left|
\frac{\a}{gm^d}-\frac{\a'}{g'm^d}\right|<
\frac{q^{J-d
P+d}}{|m|^d\min\{|g|,|g'|\}},
$$
by the ultrametric inequality. Since $|g|,|g'|\leq q^J$, this leads to a contradiction under the inequality
\eqref{eq:overlap}.

The first  step in the treatment of the minor arcs will be  the following result, which is a function field analogue of \cite[Lemma 2.2]{simon}.

\begin{lemma}
\label{lem2.2}
Let  $\mathfrak{B}\subset \TT^R$
 be a measurable subset.
Let $\mathcal{C} > dR$ and assume that Hypothesis~\ref{hyp:2.1} holds with $C\geq 1$.
Suppose further that
$$
\sup_{\bal \in \mathfrak{B}} |S(\bal;P)| \leq  C_0 q^{ (n - \delta) P},
$$
for some $\delta >0$ and $C_0 \geq 1$. Then
\begin{align*}
\int_\mathfrak{B} |S(\bal;P)| \d \bal
\ll~&
C_0q^{
(n - \mathcal{C}) P}
+C_0 q^{1+dR(1 + \deg m)  +(n - dR - \delta(1 - \frac{ d R}{\mathcal{C}}) ) P},
\end{align*}
where the implicit constant depends only on $C, \mathcal{C}, d$ and $R$.

If $\delta$ is such that $\delta P\geq \mathcal{C}\left(1-\frac{d\deg m}{d-1}\right)$,  then
\begin{align*}
\int_\mathfrak{B} |S(\bal;P)| \d \bal
\ll~&
C_0q^{
(n - \mathcal{C}) P}
+C_0 q^{1+ d R \deg m +(n - dR - \delta(1 - \frac{ d R}{\mathcal{C}}) ) P}.
\end{align*}
\end{lemma}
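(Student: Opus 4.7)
The plan is to use Hypothesis \ref{hyp:2.1} to establish a measure estimate for the level sets of $|S(\cdot;P)|$, and to combine this with the pointwise supremum bound $|S(\bal;P)| \leq C_0 q^{(n-\delta)P}$ available on $\mathfrak{B}$, in a function field analogue of Rydin Myerson's argument with explicit dependence on $q$ and $\deg m$.

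First I would set $L_V = \{\bal \in \TT^R : |S(\bal;P)| \geq V\}$ for $V>0$. If $\bal_1, \bal_2 \in L_V$ and $\bbe = \bal_2 - \bal_1$, then Hypothesis \ref{hyp:2.1} yields
\begin{equation*}
\left(\frac{V}{Cq^{nP}}\right)^{1/\mathcal{C}} \leq \max\bigl\{ q^{-dP+d-1} \|m^d\bbe\|^{-1},~\|m^d\bbe\|^{1/(d-1)}q^{-1} \bigr\}.
\end{equation*}
Writing $X = (V/(Cq^{nP}))^{1/\mathcal{C}}$, this forces either $\|m^d\bbe\| \leq \varrho_1 := q^{-dP+d-1}/X$ or $\|m^d\bbe\| \geq \varrho_2 := (Xq)^{d-1}$. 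Hence $L_V$ is contained in a union of translates of an $m^d$-ball of radius $\varrho_1$, with distinct translates separated by $m^d$-distance at least $\varrho_2$. Evaluating the ball measures via Lemma \ref{lem:measure} and counting the translates inside $\TT^R$ yields
\begin{equation*}
\meas(L_V) \ll \min\bigl(1,~q^{-dRP}(Cq^{nP}/V)^{dR/\mathcal{C}}\bigr).
\end{equation*}

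The integral is then estimated via the layer-cake formula, split at the level $V_0 = C_0 q^{(n-\mathcal{C})P}$. The contribution from $V \leq V_0$ is at most $V_0$, producing the first term $C_0 q^{(n-\mathcal{C})P}$ of the claim. For $V > V_0$ the measure bound is effective, and since $\mathcal{C} > dR$ the integrand decays like $V^{-dR/\mathcal{C}}$ with exponent strictly above $-1$; hence the integral is dominated by its upper endpoint $V = C_0 q^{(n-\delta)P}$, yielding a contribution of order $C_0^{1-dR/\mathcal{C}} q^{(n-dR-\delta(1-dR/\mathcal{C}))P}$. The remaining prefactor $q^{1+dR(1+\deg m)}$ arises from the bookkeeping of boundary effects: the rounding of $\varrho_1, \varrho_2$ to actual attained values of $\|m^d\bbe\|$ (which are powers of $q$) loses a multiplicative $q$ factor per coordinate, while the $m^d$-lattice structure on $\TT^R$ forces a factor of $|m|^{dR}$ when the clusters become large enough to feel it.

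The improvement in the second half of the lemma, replacing $1+\deg m$ by $\deg m$, applies when $\delta P \geq \mathcal{C}(1-\frac{d\deg m}{d-1})$. In this regime the threshold $V_0$ is small enough that $\varrho_1$ already sits at a discrete power of $q$, eliminating the rounding loss of $q^{dR}$. The main obstacle will be the delicate comparison between the two expressions inside the $\max$ in Hypothesis \ref{hyp:2.1}, particularly around the transition value at which both expressions become comparable. One must verify that in the favourable regime the dominating expression remains dominant throughout the relevant $V$-range, so that the single factor of $q^{dR}$ indeed drops out.
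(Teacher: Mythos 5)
Your overall strategy coincides with the paper's: level sets of $|S(\cdot;P)|$, the small/large dichotomy for $\|m^d\bbe\|$ coming from Hypothesis \ref{hyp:2.1}, a covering-by-separation bound for the level sets, and an integration split at the level $q^{(n-\mathcal{C})P}$ in which the top level dominates because $\mathcal{C}>dR$ (the paper runs the dyadic version with $D(j)=\{\bal\in\mathfrak{B}:|S(\bal;P)|>q^j\}$ rather than a layer-cake). The gap is in your level-set measure bound, which is exactly where all the $\deg m$-dependence of the lemma is decided and which you assert rather than derive. The separation the dichotomy gives you is in the pseudo-metric $\|m^d(\bal_1-\bal_2)\|$, not in $|\bal_1-\bal_2|$: separation $\geq\varrho_2$ in the former only forces separation $\geq\varrho_2q^{-d\deg m}$ in the latter, so ``counting the translates inside $\TT^R$'' yields $\ll(q^{d\deg m}/\varrho_2)^R$ clusters, not $\varrho_2^{-R}$; this lost factor $q^{dR\deg m}$ is precisely how the paper's estimate \eqref{eq:measure-L}, and hence the prefactors $q^{1+dR\deg m}$ and $q^{1+dR(1+\deg m)}$, arise. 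Your claimed bound $\meas(L_V)\ll\min\{1,\,q^{-dRP}(Cq^{nP}/V)^{dR/\mathcal{C}}\}$ therefore does not follow from the argument you sketch, and your later suggestion that the factor $|m|^{dR}$ reappears through ``bookkeeping'' is inconsistent with it: if your measure bound were actually proved, the lemma would follow with no $\deg m$-prefactor at all.

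Second, even when $\deg m=0$, your measure bound is unavailable on the upper part of your integration range. When $\varrho_2\geq1$, equivalently $V\geq Cq^{nP-\mathcal{C}}$, the large alternative is vacuous (one always has $\|m^d\bbe\|<1$), the whole level set is a single cluster, and the argument only gives $\meas(L_V)\ll\varrho_1^R$, which exceeds your claimed bound by the factor $\varrho_2^R\geq1$. This regime intersects $(V_0,V_{\max}]$ whenever $\delta P$ is smaller than about $\mathcal{C}$, and that is exactly the content of the two cases of \eqref{eq:measure-L} and of the two assertions of the lemma: the hypothesis $\delta P\geq\mathcal{C}\bigl(1-\tfrac{d\deg m}{d-1}\bigr)$ guarantees every relevant level lies in the first case, so the extra $q^{dR}$ never enters. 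Your explanation of the second assertion (rounding of $\varrho_1$ to a power of $q$) is not the mechanism; rounding costs only $q^{O(R)}$ and is harmless. As written, neither the stated prefactors nor the second statement is actually derived, so the proof is incomplete at its quantitative core even though the architecture matches the paper's.
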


\begin{proof}
Throughout this proof we allow any implied constant to depend on $C, \mathcal{C}, d$ and $R$.
For each $j \in \ZZ$, we set
$
D(j) = \{ \bal \in \mathfrak{B}: |S(\bal;P)| > q^j  \}
$
and we put $L(j)=\meas D(j)$.
Note that $L(j)\leq 1$, since $\TT^R$ has measure $1$.
Then, for any integers $J\leq K$, we have
\begin{align*}
\int_\mathfrak{B}
|S(\bal;P)| \d \bal
=~&
\int_{\mathfrak{B} \setminus D(J)} |S(\bal;P)| \d \bal
+
\sum_{j = J}^{K - 1}
\int_{D(j) \setminus D(j+1)} |S(\bal;P)| \d \bal
\\
&
+ \int_{ D(K)} |S(\bal;P)| \d \bal \\
\leq~&
q^{J} + \sum_{j = J}^{K -1}  L(j)q^{j+1}
+ L(K)
\sup_{\bal \in \mathfrak{B}} |S(\bal;P)|\\
\leq~&
q^{J} + \sum_{j = J}^{K -1}  L(j)q^{j+1}
+ C_0 q^{(n-\delta) P}L(K),
\end{align*}
by the hypothesis in the lemma.
We shall take
$$
J=nP-\lceil \mathcal{C}P\rceil \quad \text{ and } \quad
K=nP-\lceil \delta P\rceil.
$$
If $\delta P\geq \mathcal{C}P
$ then the upper bound in the lemma is a trivial consequence of
the hypothesis of the lemma, since $\mathfrak B$ has measure at most $1$.
 Hence we may proceed under the assumption that $J\leq K$.

We note that $q^J\leq q^{(n-\mathcal{C})P}$ and so the first term is satisfactory.
The second and third term combine to give
 \begin{align*}
\sum_{j = J}^{K -1}  L(j)q^{j+1}+
  C_0q^{(n-\delta)P}L(K)
&  \leq
  C_0\sum_{j = J}^{(n-\delta) P}  L(j)q^{j+1}.
\end{align*}
In particular, we have $j\leq nP-1$ in this sum.
We claim that
\begin{equation}\label{eq:measure-L}
L(j)\ll
\begin{cases}
q^{d R(\deg m -  P - \frac{(j-nP) }{ \mathcal{C} })}
& \text{ if $j\leq nP+\mathcal{C}\left(\frac{d\deg m}{d-1}-1\right)$,}\\
q^{dR(1-P -\frac{j-nP}{d\mathcal{C}})}
&
\text{ if $j> nP+\mathcal{C}\left(\frac{d\deg m}{d-1}-1\right)$,}
\end{cases}
\end{equation}
noting that the second case only occurs when $\deg m=0$.
Taking this on faith for the moment, let us see how it suffices to complete the proof of the lemma.

When
$\delta P\geq \mathcal{C}\left(1-\frac{d\deg m}{d-1}\right)$, we are always in the first case of
 \eqref{eq:measure-L}  and we deduce that
\begin{align*}
 \sum_{j = J}^{(n-\delta) P}  L(j)q^{j+1}&\ll
 q^{ d R(\deg m - P ) }
  \sum_{j = J}^{(n-\delta)P}
   q^{j+1-\frac{(j-nP)dR}{\mathcal{C}}}\\
    &=
q^{1+
dR(\deg m-  P)
+\frac{dnRP}{\mathcal{C}}} \sum_{j = J}^{(n-\delta)P}
   q^{j(1-\frac{dR}{\mathcal{C}})}\\
   &\ll
   q^{1+
d R(\deg m - P)
+\frac{dnRP}{\mathcal{C}}}
   q^{(nP-\delta P)(1-\frac{dR}{\mathcal{C}})}\\
&\leq   q^{1+ dR \deg m +(n-dR)P- \delta(1 - \frac{dR}{\mathcal{C}}) P},
\end{align*}
 since $\mathcal{C}>dR$, by assumption.  This is satisfactory for the second part of the lemma.
Alternatively, when the assumption
$\delta P\geq \mathcal{C}\left(1-\frac{d\deg m}{d-1}\right)$ is not made, we can combine the estimates in
 \eqref{eq:measure-L} to get the general upper bound
$L(j)\ll
q^{dR(1 + \deg m -P-\frac{(j-nP)}{\mathcal{C}})}$, 
whence
\begin{align*}
 \sum_{j = J}^{(n-\delta) P}  L(j)q^{j+1}&\ll
 q^{dR(1 + \deg m -P)}
  \sum_{j = J}^{(n-\delta)P}
   q^{j+1-\frac{(j-nP)dR}{\mathcal{C}}}\\
   &\ll
   q^{1+
dR(1 + \deg m  -P)
+\frac{dnRP}{\mathcal{C}}}
   q^{(nP-\delta P)(1-\frac{dR}{\mathcal{C}})}\\
&\leq   q^{1+dR (1 + \deg m )  +  (n-dR)P- \delta(1 - \frac{dR}{\mathcal{C}}) P},
\end{align*}
which completes the claim in the first part of the lemma.

  It remains to establish \eqref{eq:measure-L}  for any $j\leq nP-1$.
For any  $\bal, \bal + \bbe \in D(j)$, it follows from  Hypothesis~\ref{hyp:2.1} that
$$ \| m^d  \bbe \|< C^{1/\mathcal{C}}
q^{-dP+d-1  - \frac{j-nP}{\mathcal{C}}} \quad
\text{or}\quad  C^{-(d-1)/\mathcal{C}}
q^{(j-nP)\frac{d-1}{\mathcal{C}} +d-1}  <  \| m^d \bbe\|.
$$
We set
\begin{align*}
r_1(j)&=
-dP+d-1-d\deg m -\frac{j-nP}{\mathcal{C}} +\frac{\log_q C}{\mathcal{C}}
,\\
r_2(j)&=
(j-nP)\frac{d-1}{\mathcal{C}}-d\deg m+d-1-\frac{(d-1)\log_q C}{\mathcal{C}},
\end{align*}
in which notation the above conclusion can be written as
\begin{equation}
\label{ineq1}
\| m^d  \bbe\|< q^{ \min\{0,  r_1(j) + d \deg m \} } 
\quad \textnormal{ or } \quad
q^{r_2(j) + d \deg m}
<
 \|m^d  \bbe\|.
\end{equation}

Recall the definition of the boxes
from Section \ref{boxes}.
As explained there, since  $\mathfrak{B}\subset \TT^R$,
we can cover $D(j)$ by
at most
$$
\max\left\{1,q^{R(-\lceil r_2(j)\rceil)  }\right\}
\leq \frac{1}{q^{R\min\{r_2(j),0\}}}
$$
boxes of the form $B_{\lceil r_2(j)\rceil}(\z')$. Suppose that we are given any such box, with $\z' \in K_{\infty}^R$. Then we  claim that
\begin{equation}\label{eq:bomb}
\meas \left(  B_{\lceil r_2(j)\rceil}(\z') \cap D( j )  \right)  \leq   q^{ R (r_1(j) + d \deg m)},
\end{equation}
from which
it follows that
$$
L(j)\ll q^{R(r_1(j)+ d \deg m  - \min\{0,r_2(j)\})}.
$$
Suppose first that
$j\leq nP+\mathcal{C}\left(\frac{d\deg m}{d-1}-1\right)$.
Then $\min\{0,r_2(j)\} =  r_2(j)$ and it follows that
$
L(j)\ll
q^{R(d \deg m - dP - \frac{(j-nP)d}{\mathcal{C}})}$,
as claimed. In the opposite case we have
$\min\{0,r_2(j)\}\geq -\frac{(d-1)\log_qC}{\mathcal{C}}$ and it follows that
\begin{align*}
L(j)&\ll
q^{dR(1-P)-\frac{(j-nP)R}{\mathcal{C}}}.
\end{align*}
The  bound
claimed in \eqref{eq:measure-L} is now an easy consequence.

It remains to prove \eqref{eq:bomb}.
Clearly there is nothing to prove if the intersection is empty.
Thus, let us fix $\bal \in B_{\lceil r_2( j )\rceil }(\z') \cap D( j )$
and we suppose that  $\bal + \bbe \in B_{\lceil r_2( j )\rceil}(\z') \cap D( j )$,
for some $\bbe \in \TT^R$. 
Then
$$
\| m^d \bbe \| = \| m^d (\bal + \bbe - \z') -  m^d (\bal - \z') \|  < q^{\lceil r_2( j )\rceil + d \deg m }.
$$
Therefore,  it follows from  \eqref{ineq1} that
$ \| m^d \bbe \| <  q^{ \min\{0,  r_1(j) + d \deg m \}  } 
$. 
The proof of   \eqref{eq:bomb} now follows on appealing to Lemma \ref{lem:measure}.
\end{proof}

\subsection{Treatment of the  minor arcs}

Recall from (\ref{eq:multi}) the definition of the multilinear form associated to each form $f_k$, $1\leq k\leq R$.
Then
the multilinear form associated to $F_k$ is
$m^d \Psi^{(k)}_{i}(\x^{(1)}, \ldots, \x^{(d-1)})$, for $1\leq k\leq R$.
Recall the definition of $N^{(v)}(J; \bbe)$ from (\ref{def Nv}).
The following result is a version of  \cite[Lemma 4.3.4]{lee} in which the implied constant has been made explicit.

\begin{lemma}
\label{lem3.5}
Let $J,P \in \NN$ such that $J\leq P$ and let $\bal \in K_{\infty}^R$.
Then we have
$$
|S (\bal;P)  |^{2^{d-1}} \leq q^{n (2^{d-1} P -  (d-1) J ) } N^{(d-1)}(J; m^d \bal).
$$
\end{lemma}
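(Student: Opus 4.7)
My plan is to iterate a Weyl differencing manoeuvre $d-1$ times, exploiting the non-archimedean structure of $K_\infty$ to keep every new shift variable of size bounded by $q^J$. After $d-1$ iterations the polynomial appearing in the phase becomes an affine function of the remaining free variable; additive character orthogonality then reduces the innermost sum to a counting problem for $\underline{\uu}$, which by a monotonicity argument is controlled by $N^{(d-1)}(J;m^d\bal)$.

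For the first step I decompose each $\x$ with $|\x|<q^P$ uniquely as $\x=\x'+t^J\x''$ with $|\x'|<q^J$ and $|\x''|<q^{P-J}$, and apply Cauchy--Schwarz in the $\x''$-variable. Opening the square, substituting $\x'_2=\x'_1+\uu^{(1)}$ (so that $|\uu^{(1)}|<q^J$ by the ultrametric inequality) and reassembling $\z=\x'_1+t^J\x''$ with $|\z|<q^P$, I obtain
$$
|S(\bal;P)|^2 \leq q^{n(P-J)} \sum_{|\uu^{(1)}|<q^J} \sum_{|\z|<q^P} \psi\bigl(-\bal\cdot \Delta_{\uu^{(1)}}\mathbf{F}(\z)\bigr),
$$
where $\Delta_{\uu^{(1)}}\mathbf{F}(\z)=\mathbf{F}(\z+\uu^{(1)})-\mathbf{F}(\z)$ has degree $d-1$ in $\z$. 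Iterating exactly the same procedure $d-2$ further times on the inner $\z$-sum, and introducing additional shifts $\uu^{(2)},\dots,\uu^{(d-1)}$ each satisfying $|\uu^{(j)}|<q^J$, a direct induction on the exponents of $q$ produces
$$
|S(\bal;P)|^{2^{d-1}} \leq q^{n\bigl((2^{d-1}-1)P-(d-1)J\bigr)} \!\!\sum_{\substack{\underline{\uu}\in \FF_q[t]^{(d-1)n}\\ |\uu^{(j)}|<q^J}}\,\sum_{|\z|<q^P} \psi\bigl(\pm\bal\cdot \Delta_{\uu^{(1)}}\!\cdots\!\Delta_{\uu^{(d-1)}}\mathbf{F}(\z)\bigr).
$$

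To finish I would compute the iterated difference explicitly. Writing $\mathbf{F}(\x)=\mathbf{f}(m\x+\b)$, the chain rule for difference operators reduces the computation to the corresponding iterated difference of $\mathbf{f}$ evaluated at $m\x+\b$ with shifts $m\uu^{(j)}$; using the symmetry of the coefficients of $\mathbf{f}$, the result is affine in $\z$ and the coefficient of $z_i$ in its $k$-th component is precisely $m^d\Psi^{(k)}_i(\uu^{(1)},\dots,\uu^{(d-1)})$, while the $\z$-independent part depends only on $\underline{\uu}$. The $\z$-sum therefore factorises and, by the orthogonality relations recorded in Section~\ref{s:add-characters}, equals $q^{nP}$ in absolute value whenever $\|m^d\sum_{k=1}^R \alpha^{(k)}\Psi^{(k)}_i(\underline{\uu})\|<q^{-P}$ for every $1\leq i\leq n$, and vanishes otherwise. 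Since $J\leq P$ forces $q^{-P}\leq q^{-dP+(d-1)J}$, this condition is strictly stronger than the one appearing in the definition \eqref{def Nv} of $N^{(d-1)}(J;m^d\bal)$ with $v=d-1$, so the number of admissible $\underline{\uu}$ is bounded above by $N^{(d-1)}(J;m^d\bal)$. Absorbing the extra factor $q^{nP}$ upgrades $(2^{d-1}-1)P$ to $2^{d-1}P$ and yields the claimed inequality. The main places that require care are keeping the exponent bookkeeping clean throughout the induction and verifying that the ultrametric inequality preserves the relevant boxes at every intermediate step; both of these are routine applications of the non-archimedean property.
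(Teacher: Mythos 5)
Your differencing scheme and exponent bookkeeping are fine up to the last step, but the final comparison is where the argument breaks: it has an inequality backwards, and this is not a cosmetic slip. After $d-1$ differences the free variable $\z$ still runs over the full box $|\z|<q^P$, so orthogonality kills the $\z$-sum unless $\|m^d\sum_k\alpha^{(k)}\Psi^{(k)}_i(\underline\uu)\|<q^{-P}$ for all $i$; your argument therefore bounds $|S(\bal;P)|^{2^{d-1}}$ by $q^{n(2^{d-1}P-(d-1)J)}$ times the number of $\underline\uu$ with $|\uu^{(j)}|<q^J$ satisfying this condition with threshold $q^{-P}$. But the quantity $N^{(d-1)}(J;m^d\bal)$ in \eqref{def Nv} imposes the threshold $q^{-dP+(d-1)J}$, and since $J\leq P$ one has $-dP+(d-1)J\leq -P$, i.e.\ $q^{-dP+(d-1)J}\leq q^{-P}$ (with equality only when $J=P$), the opposite of what you assert. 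So the condition you obtain is \emph{weaker} than the one defining $N^{(d-1)}$, the set you count \emph{contains} the set counted by $N^{(d-1)}(J;m^d\bal)$, and your final inequality ``number of admissible $\underline\uu\leq N^{(d-1)}$'' goes the wrong way. What you have actually proved is the lemma with $N^{(d-1)}$ replaced by the larger count at threshold $q^{-P}$, which is genuinely weaker and would not feed into Lemma \ref{lem2.5}, whose alternative (ii) relies on the much stronger approximation quality $q^{-dP+R(d-1)J-(R-1)(d-1)}$ coming from the smaller threshold.

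The missing ingredient is exactly the point of the paper's proof: plain Cauchy--Schwarz plus orthogonality cannot simultaneously shrink the shift boxes from $q^P$ to $q^J$ \emph{and} tighten the norm condition from $q^{-P}$ to $q^{-dP+(d-1)J}$; that trade-off is a lattice-point statement, not a differencing statement. The paper first applies the standard Weyl inequality (Lee, Cor.~4.3.2) with shifts of size $q^P$, producing $N^{(0)}(J;m^d\bal)$, and then passes from $N^{(v-1)}$ to $N^{(v)}$ one variable at a time via the shrinking lemma of Browning--Sawin \cite[Lemma 6.4]{BSa}, each step costing exactly $q^{-n(J-P)}$ while strengthening the threshold by a factor $q^{-(P-J)}$; iterating $d-1$ times gives \eqref{N0 bound} and hence the lemma. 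To repair your approach you would need to inject this shrinking-lemma input (or an equivalent geometry-of-numbers argument) rather than read the stronger condition off orthogonality, since the latter only ever sees the size $q^P$ of the remaining free variable.
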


\begin{proof}
We begin with an application of \cite[Cor.~4.3.2]{lee}, which leads to the inequality
$$
|S(\bal;P)|^{2^{d-1}}\leq q^{(2^{d-1}-d+1)nP   } N^{(0)}(J; m^d  \bal),
$$
for any $\bal \in K_{\infty}^R$.
Therefore, it suffices to prove
\begin{equation}\label{N0 bound}
N^{(0)}(J; m^d \bal) \leq
q^{-n (J - P) (d-1) }
N^{(d-1)}(J; m^d \bal),
\end{equation}
which in turn follows by showing that
$$
N^{(v-1)}(J; m^d \bal) \leq q^{-n (J - P)}  N^{(v)}(J; m^d \bal),
$$
for each $1 \leq v \leq d-1$.

This will be a straightforward consequence of  the version of the shrinking lemma
found in work of Browning and Sawin \cite[Lemma 6.4]{BSa}. To see this,
fix $1 \leq v \leq d-1$ and $\x^{(i)} \in \FF_q[t]^n$, for each $i \neq v$, satisfying
\begin{eqnarray}
\label{cond+++}
|\x^{(1)}|, \ldots, |\x^{(v - 1)}| < q^{J}
\quad
\textnormal{and}
\quad
|\x^{(v + 1)}|, \ldots, |\x^{(d - 1)}| < q^{P}.
\end{eqnarray}
Let
$$
L_i(\x)
=
\sum_{k = 1}^R \alpha^{ (k) } m^d  \Psi^{(k)}_i (\x^{(1)}, \ldots, \x^{(v-1)}, \x , \x^{(v+ 1)} \ldots, \x^{(d-1)}),
$$
for $1 \leq i \leq n$.
We write
$$
M(b; Z)  =
\# \{
(\x, \y)  \in \FF_q[t]^{2n}:
|\x|<  q^{b + Z}, ~|L_i(\x) + y_i|<  q^{- b + Z}, 1\leq i\leq n \},
$$
for any  $b, Z \in \ZZ$.
Now choose $b, Z_1, Z_2 \in \ZZ$ such that
\begin{eqnarray}
\label{cond lattice}
Z_2 \leq 0, \quad b + Z_2 \in \ZZ,  \quad b - Z_2 \in \ZZ_{>0} \quad \textnormal{and} \quad Z_2 - Z_1 \in \ZZ_{\geq 0}.
\end{eqnarray}
Taking $a=b+Z_2\in \ZZ$, $c=b-Z_2 \in \ZZ_{>0}$ and $s=Z_2-Z_1 \in \ZZ_{\geq 0}$ in
\cite[Lemma~6.4]{BSa}, it follows
that
$$
\frac{M(b; Z_1)}{M(b; Z_2)} \geq q^{n(Z_1 - Z_2)},
$$
for each $\x^{(i)}$ with $i \neq v$, such that
\eqref{cond+++} holds.
Setting
$$
b = P \frac{v+ 1}{2}  -  J \frac{ v- 1 }{2},
\quad
Z_1 = - P \frac{v + 1}{2} +  J \frac {v + 1}{2}
$$
and
$$
Z_2  =  - P \frac{v - 1}{2} +  J \frac{v - 1}{2},
$$
we clearly have
$$
b + Z_2 = P, \quad b - Z_2 = P v - J (v-1)
\quad
\textnormal{and}
\quad
Z_2 - Z_1 = P  - J,
$$
so that \eqref{cond lattice} holds.
Summing the above inequality over
all $\x^{(i)}$ with $i \neq v$, we arrive at the
inequality
$
N^{(v)}(J; m^d \bal) \geq q^{n (J - P)} N^{(v-1)}(J; m^d \bal),
$
which is what we were supposed to prove.
\end{proof}

Given a form $f \in \FF_q[t][x_1, \ldots, x_n]$ of degree $d$, we put
$$
\mathcal{M}_f
=
\left\{ (\x^{(1)}, \ldots, \x^{(d-1)}) \in \mathbb{A}^{(d-1)n }:
\Psi_j(\x^{(1)}, \ldots, \x^{(d-1)}) = 0,
~1 \leq j \leq n\right\},
$$
where $\Psi_{j}$ are the multilinear forms associated to $f$, and we let
$$
\mathcal{M}_f(J) = \mathcal{M}_f \cap \{ (\x^{(1)}, \ldots, \x^{(d-1)}) \in \FF_q[t]^{(d-1)n}:
|\x^{(1)}|, \ldots, |\x^{(d-1)}|  <   q^{J} \}.
$$
Let $\h \in \FF_q[t]^R$.
Since
\begin{align*}
V_{\h. \mathbf{f}} 
&=  \mathcal{M}_{\h. \mathbf{f}} \cap
 \{ (\x^{(1)}, \ldots, \x^{(d-1)}) \in \AA^{(d-1)n}
:  \x^{(1)} =  \cdots = \x^{(d-1)} \},
\end{align*}
in the notation of \eqref{eq:Vf},
it follows that
$$
\dim V_{\h. \mathbf{f}} + (d - 2)  n \geq \dim \mathcal{M}_{\h. \mathbf{f}},
$$
by the affine dimension theorem.  Hence
\begin{eqnarray}
\label{g inv}
\# \mathcal{M}_{\h. \mathbf{f}}(J) \ll q^{  ((d - 2) n + \dim V_{ \h. \mathbf{f}}) J  },
\end{eqnarray}
where the implicit constant depends only on $d$ and $n$.

\begin{lemma}
\label{lem2.5}
Let $J,P \in \NN$ such that $J\leq P$ and  let $\bal=(\alpha^{(1)},\dots,\alpha^{(R)}) \in K_{\infty}^R$.  Then one of the following two alternatives holds:
\begin{enumerate}[label= $(\textnormal{\roman*})$]
\item We have
$$
S(\bal;P)  \ll q^{n P -  \frac{(n  - \sigma_\f )}{2^{d-1}} J },
$$
where the implicit constant depends only on $d,n,R$, and $\sigma_\f$ is defined in \eqref{1.10}.

\item There exist $g, a_1, \ldots, a_R \in \FF_q[t]$, with $g$ monic,  such that $\gcd(g, \a) = 1$,
$$
0<  |g| \leq  q^{R(d-1) (J-1)}$$
and
$$
|g m^d\bal- \a| <  q^{- d P + R(d-1) J-(R-1)(d-1)}.
$$
\end{enumerate}
\end{lemma}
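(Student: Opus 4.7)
The plan is to proceed via a Weyl differencing argument, following Rydin Myerson \cite{simon} and adapting to positive characteristic. The strategy is to argue by contrapositive: assume that alternative (ii) fails and deduce that alternative (i) must hold. By Lemma~\ref{lem3.5},
$$
|S(\bal; P)|^{2^{d-1}} \leq q^{n(2^{d-1}P - (d-1)J)}\, N^{(d-1)}(J; m^d\bal),
$$
so it suffices to establish the bound $N^{(d-1)}(J; m^d\bal) \ll q^{(n(d-2) + \sigma_\f)J}$.

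To treat $N^{(d-1)}$, I would apply Lemma~\ref{last1} with $\bbe = m^d\bal$. When Case (ii) of that lemma holds, $N^{(d-1)}(J; m^d\bal) = N^{\textnormal{aux}}(J;\, m^d\bal\, t^{dP - J})$, to which Theorem~\ref{t:Nbf} applies. A key refinement is that the proof of Theorem~\ref{t:Nbf} actually delivers a bound with $\sigma_\f$ in place of $\sigma$: the leading Laurent coefficient vector $\mathbf{b} \in \FF_q^R$ of $m^d\bal\,t^{dP-J}$ is non-zero, and hence the central estimate $\dim Y_{J-1} \leq n(d-2) + \dim V_{\mathbf{b}.\f} \leq n(d-2) + \sigma_\f$ follows from Remark~\ref{rem:sigma-f}, replacing the cruder bound used in the statement of that theorem. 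When Case (i) of Lemma~\ref{last1} holds instead, one has either $|m^d\bal| < q^{J-dP+d-1}$ (sub-case (i.a)) or $|m^d\bal| \geq q^{-(d-1)(J-1)}$ (sub-case (i.b)). In (i.a), the choice $g = 1$, $\a = \mathbf{0}$ satisfies the required bound $|gm^d\bal - \a| < q^{-dP+R(d-1)J - (R-1)(d-1)}$ once one checks the elementary inequality $J + R(d-1) \leq R(d-1)J$ in the allowed range of $J$. In (i.b), one extracts a non-trivial Diophantine approximation by applying a function field Dirichlet theorem in $R$ dimensions to $m^d\bal$, calibrated to the parameters $q^{R(d-1)(J-1)}$ and $q^{-dP + R(d-1)J - (R-1)(d-1)}$.

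The main obstacle will be handling the sub-regime in Case (ii) of Lemma~\ref{last1} where $M^* = \min\{(J+M)/(d-1), J\} < J$, which occurs when $|m^d\bal\,t^{dP-J}|$ is relatively small: here Theorem~\ref{t:Nbf} gives a bound weaker than the target, and this slackness has to be converted into a Diophantine approximation of the form required by alternative (ii), contradicting the standing assumption that (ii) fails. The implicit constants coming from Theorem~\ref{t:Nbf}, in particular the factor $(d-1)^{M^* n + n}$, must also be absorbed carefully into the implicit constant in alternative (i), which is to depend only on $d, n$ and $R$.
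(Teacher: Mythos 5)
Your reduction via Lemma~\ref{lem3.5} to bounding $N^{(d-1)}(J;m^d\bal)$ is the right first step, but the route you then take (Lemma~\ref{last1} plus Theorem~\ref{t:Nbf}) is not the one used in the paper, and it has gaps that your own plan does not close. The decisive problem is your sub-case (i.b): knowing only that $\|m^d\bal\|\geq q^{-(d-1)(J-1)}$, no Dirichlet-type pigeonhole argument can produce the approximation demanded by alternative (ii). Dirichlet with denominators $|g|\leq q^{R(d-1)(J-1)}$ only guarantees $|gm^d\bal-\a|<q^{-(d-1)(J-1)}$, whereas (ii) requires quality $q^{-dP+R(d-1)J-(R-1)(d-1)}$, which for $J$ much smaller than $P$ is astronomically stronger; such an approximation holds only for very special $\bal$ (the major arcs) and cannot be ``extracted'' from a lower bound on the fractional part. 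The same objection applies to the sub-regime of Lemma~\ref{last1}(ii) that you flag, where $\min\{\frac{J+M}{d-1},J\}<J$ and Theorem~\ref{t:Nbf} falls short of the target $q^{J((d-2)n+\sigma_\f)}$: you say the slack ``has to be converted into a Diophantine approximation'', but you give no mechanism for that conversion, and none is available from size information alone. A further structural issue is that the constants $(d-1)^{\min\{\frac{J+M}{d-1},J\}n+n}$ in Theorem~\ref{t:Nbf} grow exponentially in $J$; Lemma~\ref{lem2.5}(i) asserts an implied constant depending only on $d,n,R$ with no largeness hypothesis on $q$, so these factors cannot simply be absorbed.

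The paper's proof avoids all of this by exploiting the arithmetic structure of the solutions counted by $N^{(d-1)}(J;m^d\bal)$ rather than only their number. One forms, for each counted tuple $\underline\uu$, the column vector $(\Psi^{(1)}_j(\underline\uu),\dots,\Psi^{(R)}_j(\underline\uu))^{T}$ and assembles all of these into a matrix $\mathbf{M}$. If $\rank\mathbf{M}<R$, every counted point lies on $\mathcal{M}_{\h.\f}$ for a single nonzero $\h\in\FF_q[t]^R$, so \eqref{g inv} gives $N^{(d-1)}(J;m^d\bal)\ll q^{J((d-2)n+\sigma_\f)}$ and Lemma~\ref{lem3.5} yields alternative (i) directly, with $\sigma_\f$ appearing naturally (no refinement of Theorem~\ref{t:Nbf} is needed, and in fact Theorem~\ref{t:Nbf} is not used here at all). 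If $\rank\mathbf{M}=R$, one selects a nonsingular $R\times R$ submatrix $\mathbf{M}_0$, uses the defining inequalities $\bigl|\sum_i m^d\alpha^{(i)}\Psi^{(i)}_{j_\ell}-s^{(\ell)}\bigr|<q^{-dP+(d-1)J}$ and multiplies by the cofactor matrix of $\mathbf{M}_0$; Cramer's rule then produces $g=\det\mathbf{M}_0$ (normalised and divided by a gcd) and $\a$ satisfying exactly the bounds of alternative (ii). This rank dichotomy is the key idea missing from your proposal: it is what manufactures the strong rational approximation in the non-degenerate case, and it is the only place the hypothesis ``many near-solutions of the multilinear system'' is converted into Diophantine information. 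Your observation that the diagonal argument inside Theorem~\ref{t:Nbf} really controls $\dim V_{\mathbf{b}_0.\f}\leq\sigma_\f$ is correct but peripheral; as it stands, your argument does not prove the lemma.
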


\begin{proof}
For $1\leq j\leq n$,
let $\mathbf{M}_j$ be the matrix whose columns are
$$
\begin{pmatrix}
  \Psi^{(1)}_{j}(\x^{(1)}, \ldots, \x^{(d-1)}) \\
  \vdots  \\
  \Psi^{(R)}_j (\x^{(1)}, \ldots, \x^{(d-1)})
\end{pmatrix},
$$
for each $(\x^{(1)}, \ldots, \x^{(d-1)})$ counted by $N^{(d-1)}(J; m^d \bal)$,
in the notation of
\eqref{def Nv}.
We then put these matrices together and define $\mathbf{M}=(\mathbf{M}_1,\dots,\mathbf{M}_n)$.
We consider two cases depending on the rank of $\mathbf{M}$.

Suppose first that   $\rank \mathbf{M} = R$. Then there exists a non-singular $R \times R$ submatrix
$$
\mathbf{M}_0
=
(\Psi^{(i)}_{j_\ell}( \x_{\ell}^{(1)}, \ldots, \x_{\ell}^{(d-1)} ))_{1 \leq i, \ell \leq R}.
$$
By the definition of $N^{(d-1)}(J; m^d \bal)$
there exist  $s^{(\ell)} \in \FF_q[t]$ and $|\gamma^{(\ell)}| < q^{-dP+(d-1)J}$ satisfying
\begin{equation}\label{eq:bird}
\sum_{i = 1}^R m^d  \alpha^{(i)} \Psi^{(i)}_{j_\ell}(\x_\ell^{(1)}, \ldots, \x_\ell^{(d-1)} )
= s^{(\ell)} + \gamma^{(\ell)},
\end{equation}
for each $1 \leq \ell \leq R$.
Let $\mathbf{C} = ( {c}_{i, \ell})_{1 \leq i, \ell \leq R}$ be the cofactor matrix of $\mathbf{M}_0$, so that
$\mathbf{C}$ has entries in $\FF_q[t]$ and satisfies
$$
\mathbf{M}_0 \mathbf{C}^T =  (\det \mathbf{M}_0)  \mathbf{I}_{R} = \mathbf{C}\mathbf{M}_0^T,
$$
where $\mathbf{I}_{R }$ is the $R \times R$ identity matrix.
Any entry in $\mathbf{C}$ has absolute value at most $q^{(R-1)(d-1)(J-1)}$,
since each entry of $\mathbf{M}_0$ has absolute value
$\leq  q^{(d-1)(J-1)}$.
On multiplying both sides of
\eqref{eq:bird} by $\mathbf{C}$ on the left, we  obtain
\begin{align*}
\begin{pmatrix}
(\det \mathbf{M}_0) m^d \alpha^{(1)} - \sum_{\ell = 1}^R  c_{1, \ell} s^{(\ell)}
\\
\vdots
\\
(\det \mathbf{M}_0) m^d \alpha^{(R)} - \sum_{\ell = 1}^R  c_{R, \ell} s^{(\ell)}
\end{pmatrix}
=
\mathbf{C}  \begin{pmatrix}
\gamma^{(1)}
\\
\vdots
\\
\gamma^{(R)}
\end{pmatrix}.
\end{align*}
We therefore obtain
\begin{align*}
\left| (\det \mathbf{M}_0) m^d  \alpha^{(i)} - \sum_{\ell = 1}^R  c_{i, \ell} s^{(\ell)} \right|
&<  q^{(R-1) (d-1) (J-1)} q^{- d P  + (d-1) J }\\
&=   q^{- d P + R (d-1) J-(R-1)(d-1) },
\end{align*}
for $1 \leq i \leq R$.
Set
$$
g = \frac{ \det \mathbf{M}_0 }{D} \quad \textnormal{and} \quad a_i = \frac{1}{D }
\sum_{\ell = 1}^R  c_{i, \ell} s^{(\ell)},
$$
for $1 \leq i \leq R$,
where
$$
D= \gcd \left(  \det \mathbf{M}_0,  \sum_{\ell = 1}^R  c_{1, \ell} s^{(\ell)}, \ldots, \sum_{\ell = 1}^R  c_{R, \ell} s^{(\ell)} \right).
$$
On dividing through by an element of $\FF_q^*$ we can further assume that $g$ is monic.
Noting that
$0< |\det \mathbf{M}_0| \leq  q^{R (d-1) (J-1)}$,
we therefore establish alternative (ii) of the lemma.

Next we suppose that $\rank \mathbf{M} < R$.
Then there exists $\h \in \FF_q[t]^R \setminus \{ \mathbf{0} \}$ such that
$$
\sum_{i = 1}^R h_i \Psi^{(i)}_j(\x^{(1)}, \ldots, \x^{(d-1)}) =0,
$$
for $1\leq j\leq n$ and  every
 $(\x^{(1)}, \ldots, \x^{(d-1)})$ counted by $N^{(d-1)}(J; m^d \bal)$.
Since
$$
\sum_{i = 1}^R h_i \Psi^{(i)}_j(\x^{(1)}, \ldots, \x^{(d-1)}) = \Psi_j^{(\h)}(\x^{(1)}, \ldots, \x^{(d-1)}) ,
$$
for $1 \leq j \leq n$,
where $\Psi_j^{(\h)}$ are the multilinear forms associated to the form $\h.\mathbf{f}$,
it then follows from \eqref{g inv} that
$$
N^{(d-1)}(J; m^d \bal) \leq \# \mathcal{M}_{\h . \mathbf{f}} (J) \ll
q^{J( (d-2) n + \dim V_{\h. \mathbf{f}} ) } \leq q^{J( (d - 2) n + \sigma_\f) },
$$
where the implicit constant depends only on $d$ and $n$, and $\sigma_\f$ is given in \eqref{1.10}.
Thus, by Lemma \ref{lem3.5},  it follows that
$$
S(\bal;P)  \ll q^{n P -  \frac{(n - \sigma_\f ) }{2^{d-1}}  J },
$$
which is precisely the bound in alternative (i).
\end{proof}

Let us set
\begin{equation}
\label{def Del}
\Delta(J) =  R(d-1)  (J-1)
\end{equation}
and recall the definition \eqref{def del0} of $\delta_0$.
Since the set of $\bal$ satisfying alternative (ii) of Lemma \ref{lem2.5} is contained
in the set of major arcs $\mathfrak{M}(\Delta(J))$, in the notation of \eqref{defn major arc}, the following result is an  immediate consequence of Lemma \ref{lem2.5}.

\begin{corollary}\label{cor:minor est1}
Let $J\leq P$ be integers. Then
$$
\sup_{\bal \in \mathfrak{m}(\Delta(J) ) } |S(\bal;P)| \ll
q^{n P -
(d-1)R\delta_0 J},
$$
where the implicit constant depends only on $d$, $n$ and $R$, and where $\delta_0$ and $\Delta(J)$ are given by \eqref{def del0} and \eqref{def Del}, respectively.
\end{corollary}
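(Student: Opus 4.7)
The plan is to deduce the corollary as a direct dichotomy consequence of Lemma~\ref{lem2.5}. Fix $\bal\in\mathfrak{m}(\Delta(J))$. Lemma~\ref{lem2.5} guarantees that one of its two alternatives (i) or (ii) is satisfied. I would argue that alternative (ii) is incompatible with $\bal$ lying in the minor arcs $\mathfrak{m}(\Delta(J))$, so that alternative (i) must hold; this immediately yields the desired bound on $|S(\bal;P)|$ after a short arithmetic check that $(d-1)R\delta_0=(n-\sigma_\f)/2^{d-1}$.

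The first step is to verify the implication ``alternative (ii) $\Rightarrow$ $\bal\in\mathfrak{M}(\Delta(J))$.'' If (ii) holds, we obtain a monic $g\in\FF_q[t]$ with $0<|g|\leq q^{R(d-1)(J-1)}=q^{\Delta(J)}$, and a vector $\a\in\FF_q[t]^R$ with $\gcd(g,\a)=1$, satisfying
\[
\bigl|\bal-\tfrac{\a}{gm^d}\bigr|<\frac{q^{-dP+R(d-1)J-(R-1)(d-1)}}{|gm^d|}.
\]
Comparing exponents, one has $-dP+R(d-1)J-(R-1)(d-1)=\Delta(J)+d-dP-1<\Delta(J)-dP+d$, so the approximation is sharper than the one defining $\mathfrak{M}(\Delta(J))$ in \eqref{defn major arc}. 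A brief reduction of $\a$ modulo $gm^d$ (using that $\bal\in\TT^R$) allows me to assume $|\a|<|gm^d|$ without loss of generality, so $\bal\in\mathfrak{M}(\Delta(J))$. This contradicts $\bal\in\mathfrak{m}(\Delta(J))=\TT^R\setminus\mathfrak{M}(\Delta(J))$.

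Consequently alternative (i) of Lemma~\ref{lem2.5} must hold, giving
\[
|S(\bal;P)|\ll q^{nP-\frac{n-\sigma_\f}{2^{d-1}}J}.
\]
Finally, substituting the definition \eqref{def del0} of $\delta_0$ yields $(d-1)R\delta_0=\frac{n-\sigma_\f}{2^{d-1}}$, so the exponent matches the one claimed, and taking the supremum over $\bal\in\mathfrak{m}(\Delta(J))$ gives the corollary. The only technically delicate point is to confirm that the size bound on $\a$ needed for membership in the major arcs can always be arranged from the approximation given by alternative (ii); everything else is a mechanical comparison of exponents.
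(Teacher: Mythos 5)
Your proposal is correct and follows the same route as the paper: the paper deduces the corollary immediately from Lemma \ref{lem2.5} by noting that any $\bal$ satisfying alternative (ii) lies in $\mathfrak{M}(\Delta(J))$, so on $\mathfrak{m}(\Delta(J))$ alternative (i) must hold, and $(d-1)R\delta_0=(n-\sigma_\f)/2^{d-1}$ gives the stated exponent. Your extra care about reducing $\a$ modulo $gm^d$ is sound (when the approximation radius is $<1$ the ultrametric inequality already forces $|\a|<|gm^d|$, and otherwise the minor arcs are empty), but this is exactly the containment the paper invokes.
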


\subsection{A major arc approximation}

Recall the notation \eqref{eq:D1} for $S_g(\a)$ and
\eqref{eq:D2} for $S_\infty(\bbe)$.
We proceed by recording the following result, which refines work of Lee
\cite[Lemma~4.6.2]{lee}. (To be precise, Lee's version operates under the slightly more stringent assumption that
$|\bbe| < q^{-(d-1) P} |g|^{-1} |m|^{-d}$.)

\begin{lemma}
\label{lem2.4}
Let $P \in \NN$, let $g \in \FF_q[t]$ and $\a \in \FF_q[t]^R$. Put $\bal = \a/g + \bbe$ and assume
that $0 < |g| \leq  q^P$ and
$
|\bbe| <  q^{ -(d-1)(P-1) } |g|^{-1} |m|^{-d}.
$
Then
$$
S \left( \frac{\a}{g} + \bbe;P  \right)
 =
q^{n P  }
S_{g}(\a) S_\infty (m^d t^{dP} \bbe ).
$$
\end{lemma}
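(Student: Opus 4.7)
The plan is to exploit the arithmetic structure of $\a/g + \bbe$ via the standard decomposition $\x = \y + g\z$, where $\y \in \FF_q[t]^n$ satisfies $|\y| < |g|$ and $\z \in \FF_q[t]^n$ satisfies $|\z| < q^P/|g|$; since $0 < |g| \leq q^P$, this bijects onto $\{\x \in \FF_q[t]^n : |\x| < q^P\}$. Because $\psi$ is an additive character, the summand $\psi((\a/g + \bbe) \cdot \mathbf{F}(\y + g\z))$ factors as
$$
\psi\!\left(\frac{\a \cdot \mathbf{F}(\y + g\z)}{g}\right) \cdot \psi(\bbe \cdot \mathbf{F}(\y + g\z)),
$$
and I would treat each factor separately.

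For the rational factor, the key observation is that $\mathbf{F}(\y + g\z) - \mathbf{F}(\y)$ is divisible by $g$ as an element of $\FF_q[t]^n$, so $\a \cdot (\mathbf{F}(\y+g\z) - \mathbf{F}(\y))/g \in \FF_q[t]^R$, which lies in the kernel of $\psi$. Hence this factor equals $\psi(\a \cdot \mathbf{F}(\y)/g)$, independent of $\z$, and the $\y$-sum recovers exactly $|g|^n S_g(\a)$ by the definition \eqref{eq:D1}.

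For the $\bbe$ factor, I would write $\mathbf{F}(\y + g\z) = \mathbf{f}(mg\z + (m\y + \b))$ and expand via the multilinear form attached to $\mathbf{f}$, isolating the principal term $\mathbf{f}(mg\z)$. Every remaining cross-term carries at least one factor of $m\y + \b$, whose size is at most $|m||g|/q$; combined with the bound $|mg\z| \leq |m| q^{P-1}$, the $j=1$ term (which dominates, since the ratio of consecutive terms is $|g|/q^P \leq 1$) contributes at most $|m|^d |g| q^{(d-1)(P-1) - 1}$ in absolute value. The hypothesis on $|\bbe|$ is tailored precisely so that $|\bbe|$ times this error is at most $q^{-2}$, hence strictly less than $q^{-1}$, and so $\psi$ vanishes on the error. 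Thus $\psi(\bbe \cdot \mathbf{F}(\y + g\z)) = \psi(\bbe \cdot \mathbf{f}(mg\z))$ for all admissible $\y, \z$.

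The same bound, applied with $\bxi \in \TT^n$ in place of $m\y + \b$, shows that $\psi(\bbe \cdot \mathbf{f}(mg\z))$ is constant as $\z$ is perturbed by any $\bxi \in \TT^n$, which lets me convert the $\z$-sum into an integral:
$$
\sum_{|\z| < q^P/|g|} \psi(\bbe \cdot \mathbf{f}(mg\z)) = \int_{|\v| < q^P/|g|} \psi(\bbe \cdot \mathbf{f}(mg\v)) \, \mathrm{d}\v.
$$
A change of variables $\v = t^P \w/(mg)$, using homogeneity $\mathbf{f}(t^P \w) = t^{dP} \mathbf{f}(\w)$, transforms this integral over $|\w| < |m|$; a final rescaling $\w = m\v'$ lands the integration on $\TT^n$ and brings $m^d$ inside the argument of $\psi$, so that (after tracking the two Jacobians, which combine to $q^{nP}/|g|^n$) the integral becomes $(q^{nP}/|g|^n) \, S_\infty(m^d t^{dP} \bbe)$ in the notation of \eqref{eq:D2}. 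Multiplying with $|g|^n S_g(\a)$ yields the asserted formula. There is no deep obstacle: the only subtlety is being sharp about the factor of $q$ in the $j=1$ term of the multinomial expansion, which is precisely what allows the exponent $(d-1)(P-1)$ in the hypothesis in place of the cruder $(d-1)P$ of \cite[Lemma~4.6.2]{lee}.
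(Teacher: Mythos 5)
Your proof is correct and follows essentially the same route as the paper: split $\x$ by Euclidean division by $g$, factor the additive character, use the ultrametric Taylor/multilinear estimate (with exactly the numerology $(d-1)(P-1)+\deg g+d\deg m$) to show the $\bbe$-part is insensitive to the small shift, then pass from sum to integral and rescale by homogeneity to reach $S_\infty(m^dt^{dP}\bbe)$. The only difference is organizational: you discard $m\y+\b$ in one step and change variables $\v=t^P\w/(mg)$ directly, whereas the paper keeps $\b$ through the intermediate identity $S(\a/g+\bbe;P)=S_g(\a)S(\bbe;P)$ and removes $\b t^{-P}$ at the end; both are fine.
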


\begin{proof}
We begin by deducing from \eqref{eq:sum-S} that
$$
S \left( \frac{\a}{g} + \bbe ;P \right)
=
\sum_{|\x| < q^P} \psi \left( \left( \frac{\a}{g} + \bbe  \right). \f (m \x + \b)  \right).
$$
Let us write
$\x = g \y + \z$, where
$|\y| < q^P |g|^{-1}$ and $|\z| < |g|.$
Then it follows that
\begin{align*}
S \left( \frac{\a}{g} + \bbe;P  \right)
&=
\sum_{|\y| < q^P |g|^{-1}}  \sum_{  |\z| < |g| } \psi \left( \left( \frac{\a}{g} + \bbe  \right). \f (m (g \y + \z) + \b)  \right)
\\
&=
\sum_{  |\z| < |g| } \psi \left(  \frac{\a . \f (m \z + \b) }{g}   \right)
\sum_{|\y| < q^P |g|^{-1}}   \psi (  \bbe  . \f (m (g \y + \z) + \b)  ).
\end{align*}
We claim that
$
\psi (  \bbe  . \f (m (g \y + \z) + \b)  ) =  \psi (  \bbe  . \f (m g \y  + \b)  )
$
in the inner sum,
under the assumptions of the lemma,
from which it will follow that
\begin{equation}\label{eq:pen-pot}
S \left( \frac{\a}{g} + \bbe ;P \right)
=
|g|^n S_g(\a) \sum_{|\y| < q^P |g|^{-1}}   \psi (  \bbe  . \f (m g \y  + \b)  ).
\end{equation}
To check the claim it will suffice to show that
\begin{equation}\label{eq:toe-cut}
\ord (  \bbe  .   (\f (m (g \y + \z) + \b)   -  \f (m g \y  + \b))  ) < -1,
\end{equation}
for any $|\z|<|g|$.
Note that $|m g \y  + \b|\leq q^{\deg m+ P-1}$. Hence it follows from Taylor expansion that
the left hand side of this expression is
\begin{align*}
\ord (  \bbe  .  & (\f (m (g \y + \z) + \b)   -  \f (m g \y  + \b))  ) \\
&\leq  \ord \bbe + \max \left(  \ord (m \z) + (d-1) \ord ( m g \y + \b ), d \ord (m \z)     \right)
\\
& \leq    \ord \bbe + \max \left(  \deg m^d + \deg g - 1
+ (d-1)   (P-1)  , \deg m^d  + d(\deg g - 1)     \right)
\\
& =    \ord \bbe + (d-1)   (P-1)  +  \deg g +     \deg m^d - 1
\\
&< -1,
\end{align*}
as required.
Now that we have verified \eqref{eq:pen-pot}, a further application of \eqref{eq:toe-cut} reveals that
\begin{align*}
\sum_{|\y| < q^P |g|^{-1}}   \psi (  \bbe  . \f (m g \y  + \b)  )
&=
|g|^{- n} \sum_{  |\z| < |g| } \sum_{|\y| < q^P |g|^{-1}}   \psi (  \bbe  . \f ( m ( g \y + \z ) + \b )  )
\\
&=
|g|^{- n} \sum_{|\x| < q^P}   \psi (  \bbe  . \f ( m \x + \b )  ).
\end{align*}
Hence
\begin{eqnarray}
\label{last eq}
S \left( \frac{\a}{g} + \bbe;P  \right) = S_g(\a) S(\bbe;P).
\end{eqnarray}

Arguing as in \eqref{eq:toe-cut}, it is easy to check that
$$
\ord (  \bbe  .   (\f (m (\x + \boldsymbol{\sigma}) + \b)   -  \f (m \x + \b))  ) < -1,
$$
for any  $|\x| < q^P$ and   $\boldsymbol{\sigma} \in \TT^n$.
Thus
$
\psi (  \bbe  . \f ( m \x  + \b )  )  = \psi (  \bbe  . \f ( m \v  + \b )  ),
$
for any $|\x - \v| < 1$. It now follows that
\begin{align*}
S(\bbe;P) &=
\sum_{|\x| < q^P}  \int_{|\x - \v| < 1}  \psi (  \bbe  . \f ( m \x  + \b )  )  \d \v
\\
&=
\sum_{|\x| < q^P}   \int_{|\x - \v| < 1}    \psi ( \bbe  . \f ( m \v + \b )  )  \d \v
\\
&=
\int_{|\v| < q^P}   \psi (  \bbe  . \f ( m \v  + \b )  )  \d \v\\
&=q^{nP}
\int_{\TT^n}   \psi (  \bbe  . \f ( t^P m \v  + \b )  )  \d \v,
\end{align*}
on making a change of variables.
Next, we claim that
$$
\ord (  t^{d P} \bbe  .   ( \f (m \v + \b t^{-P})   -  \f (m\v)  ) ) < -1,
$$
for any  $\v \in \TT^n$. Taking this on faith for the moment, this will imply that
\begin{align*}
S(\bbe;P)
=  q^{n P} \int_{\TT^n}   \psi ( m^d  t^{d P}  \bbe  . \f ( \v  )  )  \d\v
= q^{n P} S_\infty ( m^d t^{d P}  \bbe ),
\end{align*}
which will complete the proof of the lemma, on  substituting back  into \eqref{last eq}.
To check the claim, we note that the left hand side is
\begin{align*}
&\leq  dP + \ord \bbe + \max \left( \ord (\b t^{-P}) + (d-1) \ord ( m \v ), d \ord (\b t^{-P})     \right)
\\
&\leq  dP + \ord \bbe + \max \left(\deg m - 1 - P  + (d-1) ( \deg m - 1 ), d ( \deg m - 1 - P )     \right)
\\
&=  dP +   \ord \bbe  - P  + d ( \deg m - 1 )
\\
& =   \ord \bbe + (d-1)(P-1) + \deg m^d - 1
\\
&< -1,
\end{align*}
as required.
\end{proof}

\subsection{Deduction  of Theorem  \ref{p2.1}}

Let $\mathcal{C}>dR$.
We now have everything in place to complete the proof of
Theorem  \ref{p2.1}, which we tackle under the assumption that
Hypothesis \ref{hyp:2.1} holds and 
$P>R(d-1) + d\deg m$.
Recall the notation for  $\Delta(J)$  in \eqref{def Del}
and the definition of the major arcs
 $\mathfrak{M}(\Delta(J))=
\mathfrak{M}_m(\Delta(J);P)$
in
\eqref{defn major arc}.
We require the major arcs to be non-overlapping, which in view of \eqref{eq:overlap}, means that we need
\begin{equation}\label{eq:overlap'}
\Delta(J)\leq \frac{d(P-1)}{2}.
\end{equation}
We shall apply Lemma
\ref{lem2.4}  with $gm^d$ in place of $g$.
We also need to choose $J $ in order  that
this lemma is applicable on the major arcs $\mathfrak{M}(\Delta(J))$.
Thus we need $|gm^d|\leq  q^P$ and $|gm^d\bbe|<q^{-(d-1)(P-1)}|m|^{-d}$; in other words, we need
$$
|g|\leq  q^{P-d\deg m} \quad \text{ and }\quad |g\bbe|<q^{-(d-1)(P-1)-2d\deg m}.
$$
But
$\bal\in \mathfrak{M}(\Delta(J))$ implies that $|g|\leq q^{\Delta(J)}$ and
$|g\bbe|<q^{\Delta(J)-dP+d}|m|^{-d}$. Thus we need
$\Delta(J)\leq  P-d\deg m$ and $\Delta(J)-dP+d\leq -(d-1)(P-1)-d\deg m.$
These two inequalities are satisfied if $\Delta(J)\leq P-1-d\deg m$,
which in the light of \eqref{def Del},
we can achieve by setting
\begin{equation}\label{eq:lower-mogo}
J= \left\lfloor \frac{P-1-d\deg m}{R(d-1)} \right\rfloor +1 \geq \frac{P-1-d\deg m}{R(d-1)}.
\end{equation}
This is also enough to ensure that \eqref{eq:overlap'} holds.

We break the integral into major and minor arcs, as in \eqref{NNN}.
For the minor arcs, we apply the first part of Lemma \ref{lem2.2} with $\mathfrak{B} = \mathfrak{m}(\Delta(J))$
and
$\delta = (d-1)R
\delta_0 J/ P$, and where $C_0$ is the implicit constant from
Corollary \ref{cor:minor est1}.
Hence it follows that
\begin{align*}
\int_{\mathfrak{m}(\Delta(J))} |S(\bal;P)| \d \bal
\ll~&
q^{ (n - \mathcal{C}) P}
+q^{1+dR(1 + \deg m)+(n - dR)P -
(d-1)R\delta_0J(1 - \frac{ d R}{\mathcal{C}})}.
\end{align*}
Recalling that $\mathcal{C}>dR$, together with the lower bound
\eqref{eq:lower-mogo},
we see that
\begin{align*}
1+dR&(1 + \deg m)+(n - dR)P -
(d-1)R\delta_0 J\left(1 - \frac{ d R}{\mathcal{C}}\right)
\\
&\hspace{-0.5cm}
\leq
1+dR(1 + \deg m) + (n - dR-\delta_1)P +
\delta_0 (1+d\deg m)\left(1 - \frac{ d R}{\mathcal{C}}\right),
\end{align*}
where
$\delta_1$ is given by
\eqref{def del1}.
Hence
\begin{equation}\label{MIN}
\begin{split}
\int_{\mathfrak{m}(\Delta(J))} |S(\bal;P)| \d \bal
&\ll
q^{(n-\mathcal{C})P}+
q^{1+dR(1 + \deg m)+\delta_0(1+d\deg m)(1-\frac{dR}{\mathcal{C}})+ (n - dR - \delta_1) P}.
\end{split}
\end{equation}

Turning to
the major arcs, it follows from  Lemma \ref{lem2.4}
that
\begin{align*}
\int_{\mathfrak{M}(\Delta(J))} S(\bal;P) \d \bal  &=
\sum_{\substack{
g\in \FF_q[t] \text{ monic}\\
|g| \leq  q^{\Delta(J)}}} \sum_{ \substack{
\a\in \FF_q[t]^R
\\
|\a| < |gm^d| \\ \gcd(\a, g) = 1 } }
\int_{ |\bbe| < \frac{q^{\Delta(J)  - d P+d}}{|gm^d|} }
S \left( \frac{\a}{g m^d} + \bbe;P  \right)  \d \bbe
\\
&=
q^{(n - d R)P - d R \deg m}
\sum_{\substack{
g\in \FF_q[t] \text{ monic}\\
|g| \leq  q^{\Delta(J)}}} \sum_{ \substack{
\a\in \FF_q[t]^R
\\
|\a| < |gm^d| \\ \gcd(\a, g) = 1 } }S_{gm^d}(\a)
\mathfrak{I}(B),
\end{align*}
where 
$\mathfrak{I}(B)$ is given by 
\eqref{def si-t}, with 
$B=\Delta(J) +d-\deg g$.
According to the first part of  Lemma~\ref{si}, we have
\begin{align*}
| \mathfrak{I}
- \mathfrak{I}(B)|
&\ll  q^{n-(1-\frac{1}{d})(\frac{n}{d}-R)B}\\
&\ll  q^{\frac{n}{d}+dR-R-(1-\frac{1}{d})(\frac{n}{d}-R)\Delta(J)}|g|^{(1-\frac{1}{d})(\frac{n}{d}-R)}.
\end{align*}
Appealing to Lemma \ref{lem:snorlax} and 
recalling the 
notation \eqref{eq:ST}, 
 this error term contributes
\begin{align*}
&\ll
q^{(n - d R)P - d R \deg m+\frac{n}{d}+(d-1)R-(1-\frac{1}{d})(\frac{n}{d}-R)\Delta(J)}|m|^R
\sum_{T=0}^{ \Delta(J)}
q^{(1-\frac{1}{d})(\frac{n}{d}-R)T}
\mathcal{S}(T).
\end{align*}
Next, it follows from Lemma \ref{lem:toad} that
this contribution is
\begin{align*}
&\ll_\ve
q^{(n - d R)P - (d-1) R \deg m+\frac{n}{d}+(d-1)R-(1-\frac{1}{d})(\frac{n}{d}-R)\Delta(J)+P\ve}
\sum_{T=0}^{ \Delta(J)}q^T\\
&\ll_\ve
q^{(n - d R)P - (d-1) R \deg m+\frac{n}{d}
+(d-1)R-\left((1-\frac{1}{d})(\frac{n}{d}-R)-1\right)\Delta(J)+P\ve},
\end{align*}
for any $\ve>0$.
Finally,
we may  apply  Lemmas \ref{lem2.5iv}  and \ref{si} to deduce that
\begin{align*}
\mathfrak{S}(\Delta(J))
\mathfrak{I}
=
\mathfrak{S}\mathfrak{I} +O_\ve\left(
q^{n+R\deg m+(\Delta(J)+1)(1-(1-\frac{1}{d})(\frac{n}{d}-R))+P\ve}
\right).
\end{align*}

Let
$$
E_{\text{major}}=\left| \int_{\mathfrak{M}(\Delta(J))} S(\bal;P) \d \bal -  \mathfrak{S} \mathfrak{I} q^{(n - dR) P - d R \deg m} \right|,
$$
where $\mathfrak{S}$ and $\mathfrak{I}$ are given by \eqref{def ss} and \eqref{def si}, respectively.
Then we have shown that
\begin{align*}
E_{\text{major}}
  \ll_\ve~&
q^{(n - d R)P - (d-1) R \deg m+P\ve}
\left(
q^{\frac{n}{d}
+(d-1)R}
+
q^{n-\left((1-\frac{1}{d})(\frac{n}{d}-R)-1\right)}
\right)\\
&\times q^{-\left((1-\frac{1}{d})(\frac{n}{d}-R)-1\right)\Delta(J)},
\end{align*}
for any $\ve>0$,
on recalling our assumption that $(1-\frac{1}{d})(\frac{n}{d}-R)>1$.  Under this assumption one easily checks that
the middle term in brackets is at most $2q^{n}$. Moreover,
it follows from  \eqref{def Del}  and \eqref{eq:lower-mogo} that
$$
\Delta(J)=R(d-1)(J-1)\geq P-1-d\deg m-R(d-1).
$$
Note that  $\Delta(J)\geq 0$ under the assumptions of
Theorem  \ref{p2.1}.
Hence we obtain
\begin{align*}
E_{\text{major}}
&  \ll_\ve
q^{(n - d R-\delta_2+\ve)P -( d-1) R \deg m+n +
\delta_2(1+d\deg m+R(d-1))},
\end{align*}
where $\delta_2=
\left(1-\frac{1}{d}\right)\left(\frac{n}{d}-R\right)-1$.
Finally, we complete the proof of
Theorem  \ref{p2.1} on combining this with
 \eqref{MIN} in \eqref{NNN}.

\section{Arithmetic deductions}\label{sec4}

The main goal of this section is to prove  Theorem \ref{t:simon}.
We begin by proving Hypothesis \ref{hyp:2.1} for a suitable $C\geq 1$, for which we will need to assume  that $q\geq (d-1)^n$.
Recalling the definition of $N^{(d-1)}(J; \bbe)$ from \eqref{def Nv},
and the definition of the sum
$S(\bal;P) = S(\bal; P , m, \b)$ from
\eqref{eq:sum-S},
the first step is to prove the following result.

\begin{lemma}
\label{last2}
Let $J,P \in \NN$ such that  $J \leq P$.
Then
$$
\min \left \{ \frac{|S(\bal;P)|}{q^{n P}} ,  \frac{|S(\bal + \bbe;P)|}{q^{n P}}  \right\}^{2^d}
\leq
q^{- n (d-1) J  } N^{(d-1)}(J; m^d \bbe),
$$
for any $\bal, \bbe \in K_{\infty}^R$.
\end{lemma}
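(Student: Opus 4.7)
The plan is to mimic the proof of Lemma \ref{lem3.5} above, but starting from the product $S(\bal;P)\overline{S(\bal+\bbe;P)}$ rather than from $|S(\bal;P)|^2$, so that $\bbe$ (rather than $\bal$) emerges as the leading-degree coefficient vector in the Weyl differencing. The elementary inequality $\min\{a,b\}^2 \leq ab$ for $a,b\geq 0$ will give
$$
\min\{|S(\bal;P)|,|S(\bal+\bbe;P)|\}^{2^d} \leq |W|^{2^{d-1}},\qquad W:=S(\bal;P)\overline{S(\bal+\bbe;P)}.
$$
After performing the change of variables $\x=\y+\h$ in the expansion of $W$, one arrives at
$$
W=\sum_{|\h|<q^P}V_\h,\qquad V_\h=\sum_{|\y|<q^P}\psi\bigl(\bal\cdot(\mathbf{F}(\y+\h)-\mathbf{F}(\y))-\bbe\cdot\mathbf{F}(\y)\bigr).
$$
The crucial feature I want to exploit is that $\mathbf{F}(\y+\h)-\mathbf{F}(\y)$ has $\y$-degree only $d-1$, so that the $\y$-degree-$d$ part of the phase in $V_\h$ is exactly $-m^d\bbe\cdot\f(\y)$, independently of both $\bal$ and $\h$.

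With this in hand, I would repeat the Weyl differencing step from the proof of Lemma \ref{lem3.5} (that is, Lee's \cite[Cor.~4.3.2]{lee}) on the inner sum $V_\h$. Since only the $\y$-degree-$d$ part of the phase controls the linear-in-$\y$ coefficient obtained after $d-1$ iterated finite differences in $\y$, the outcome is the uniform bound
$$
|V_\h|^{2^{d-1}} \leq q^{(2^{d-1}-d+1)nP}\, N^{(0)}(P;m^d\bbe),
$$
with no $\h$-dependence on the right. To pass from $V_\h$ to $W$, I would then use Cauchy--Schwarz in the form $|W|^2\leq q^{nP}\sum_\h|V_\h|^2$ followed by H\"older's inequality $(\sum_\h|V_\h|^2)^{2^{d-2}}\leq q^{nP(2^{d-2}-1)}\sum_\h|V_\h|^{2^{d-1}}$, obtaining
$$
|W|^{2^{d-1}} \leq q^{n(2^d-d+1)P}\, N^{(0)}(P;m^d\bbe).
$$

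Finally, the shrinking-lemma step from the proof of Lemma \ref{lem3.5} (based on \cite[Lemma~6.4]{BSa}) converts $N^{(0)}$ into $N^{(d-1)}$ via $N^{(0)}(P;m^d\bbe)\leq q^{n(P-J)(d-1)}N^{(d-1)}(J;m^d\bbe)$ for any $J\leq P$, and assembling everything gives
$$
\min\{|S(\bal;P)|,|S(\bal+\bbe;P)|\}^{2^d} \leq q^{n(2^dP-(d-1)J)}\, N^{(d-1)}(J;m^d\bbe),
$$
which is the claimed inequality after dividing through by $q^{2^dnP}$. The main obstacle will be the exponent bookkeeping: the powers of $q^{nP}$ lost in the Cauchy--Schwarz and H\"older steps on the outer $\h$-sum must cancel exactly against the gain $q^{n(P-J)(d-1)}$ from the shrinking lemma, and this is precisely what forces the substitution $\x=\y+\h$ (as opposed to $\y=\x+\h$) so that $\bbe$ rather than $\bal$ survives as the leading coefficient vector in the phase of $V_\h$.
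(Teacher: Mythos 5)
Your argument is correct and is essentially the paper's own proof: the same starting inequality $\min\{|S(\bal;P)|,|S(\bal+\bbe;P)|\}^2\leq |S(\bal;P)||S(\bal+\bbe;P)|$, the same differencing substitution that isolates $m^d\bbe$ as the degree-$d$ coefficient of the phase (the sign $-m^d\bbe$ is harmless since $N^{(v)}$ is invariant under $\bbe\mapsto-\bbe$), the same appeal to Lee's Cor.~4.3.2 with the lower-degree term eliminated by the differencing, and the same passage from $N^{(0)}$ to $N^{(d-1)}$ via the shrinking-lemma bound \eqref{N0 bound}. The only cosmetic difference is your Cauchy--Schwarz and H\"older steps on the outer $\h$-sum, which the paper avoids by noting the bound on the inner sum is uniform in the shift and summing trivially over the $q^{nP}$ shifts; the exponents come out identical either way.
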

\begin{proof}
Let us write $S(\bal;P)=S(\bal)$ for simplicity.
We start by noting that
\begin{align*}
\min \{ |S(\bal)|, &|  S(\bal + \bbe)|  \}^2\\
&\leq |S(\bal)| |  S(\bal + \bbe)|
\\
&=
|S(\bal + \bbe) S(-\bal)|
\\
&=
 \sum_{|\x| < q^{P}} \sum_{|\y| < q^{P}}
\psi \left((\bal +  \bbe). \mathbf{f}(m  \x + \b) - \bal. \mathbf{f}(m \x + m \y + \b)\right)
\\
&\leq
 \sum_{|\y| < q^{P}}
|\widetilde{S}_{\bal, \y}(\bbe)|,
\end{align*}
where
$$
\widetilde{S}_{\bal, \y}(\bbe) = \sum_{|\x| < q^{P}}
\psi (m^d \bbe. \mathbf{f}(\x) + G_{\bal, \bbe, \y, m, \b}(\x)),
$$
for a suitable polynomial  $G_{\bal, \bbe, \y, m, \b}$ of degree $< d$.
Precisely the same argument used in  \cite[Cor.~4.3.2]{lee} now yields
$$
|\widetilde{S}_{\bal, \y}(\bbe)|^{2^{d-1}} \leq q^{(2^{d-1}-d+1)n P}N^{(0)}(J; m^d \bbe),
$$
where $N^{(0)}(J; \bbe)$ is defined in \eqref{def Nv}. (Note that the term  $G_{\bal, \bbe, \y, m, \b}$ gets eliminated during the Weyl differencing process, since it has degree $<d$.)
An application of
\eqref{N0 bound} implies
$$
|\widetilde{S}_{\bal, \y}(\bbe)|^{2^{d-1}} \leq q^{ n 2^{d-1}  P  -  n (d-1) J  } N^{(d-1)}(J; m^d \bbe).
$$
With this estimate, we may deduce that
\begin{align*}
\min \{ |S(\bal)|, |  S(\bal + \bbe)|  \}^2
&\leq
 \sum_{|\y| < q^{P}}
q^{ n P -  \frac{n(d-1)}{2^{d-1}} J } N^{(d-1)}(J; m^d \bbe)^{\frac{1}{2^{d-1}}}
\\
&=
q^{2 n P -  \frac{n (d-1)}{2^{d-1}} J } N^{(d-1)}(J; m^d \bbe)^{\frac{1}{2^{d-1}}}.
\end{align*}
The result follows by rearranging this inequality.
\end{proof}

With this lemma to hand, we may now invoke the estimates from Section~\ref{aux} to prove
Hypothesis \ref{hyp:2.1}.
Let
\begin{equation}
\label{setcalC}
\mathcal{C} =  \frac{1}{2^{d+\nu(d)} (d-1)}  \left(  \left( 1 -  \log_q (d-1) \right) n  - R+1 \right) ,
\end{equation}
where
\begin{equation}\label{eq:nu-d}
\nu(d)=\begin{cases}
1 & \text{ if $d=2$,}\\
0 & \text{ if $d\geq 3$}.
\end{cases}
\end{equation}
We shall prove that
Hypothesis \ref{hyp:2.1} holds with
$$
C=(d-1)^{\frac{n}{2^d}},
$$
under
 the
 assumptions $q\geq (d-1)^n$ and
   $\mathcal{C}\geq 1$. (Later we shall need to assume that $\mathcal{C}>dR$.)
Let $P \in \NN$ and let $\bal, \bbe \in K_{\infty}^R$. 
Let
$C_0=(d-1)^{n}> 1
$
and write
$$
\gamma=C_0^{\frac{1}{2^d}}\min \left\{  \frac{|S( \bal;P)| }{q^{n P}}, \frac{|S( \bal + \bbe;P)| }{q^{n P}}   \right\}^{-1}.
$$
Note that $\gamma>1$. Thus there exists  $N\in \ZZ_{\geq 0}$ and $\ve_0\in [0,1)$ such that
$$
\gamma^{1/\mathcal{C}}=q^{N+\ve_0}.
$$
We must now differentiate according to whether or not $N$ is positive.

\subsection*{The case $N\geq 1$}

Suppose first that $N\geq 1$ and
let
$
\Delta=\ve_0\mathcal{C}/N\geq 0$.
Then
 $$
\gamma^{1/(\mathcal{C}+\Delta)}=q^{N},
$$
and we define $\eta_0=N/P$.
Suppose for the moment that  $\eta_0 > 1$, so that $N>P$. Then
\begin{align*}
\min \left\{   \frac{|S( \bal;P)| }{  q^{n P} },  \frac{|S( \bal + \bbe;P)| }{   q^{n P} }  \right\}  &=
C_0^{\frac{1}{2^d   } } \gamma^{-1}
\\
&=
C_0^{\frac{1}{2^d   } } q^{-N(\mathcal{C}+\Delta)}
\\
&\leq
C_0^{\frac{1}{2^d   } } q^{-P\mathcal{C}}.
\end{align*}
But
$
q^{-P}\leq
\max \{ q^{-d P+d-1} \| m^d\bbe\|^{-1}, \| m^d\bbe \|^{\frac{1}{d-1}}q^{-1} \},
$
as  checked by considering the two cases $q^{-d P+d-1} \| m^d\bbe\|^{-1} \leq \| m^d\bbe \|^{\frac{1}{d-1}}q^{-1}$
and $q^{-d P+d-1} \| m^d\bbe\|^{-1} > \|m^d \bbe \|^{\frac{1}{d-1}}q^{-1}$ separately. In particular it follows that
\begin{align*}
\min \left\{   \frac{|S( \bal;P)| }{  q^{n P} },  \frac{|S( \bal + \bbe;P)| }{   q^{n P} }   \right\}  &\leq
C_0^{\frac{1}{2^d   } }\max \{ q^{-d P+d-1} \| m^d\bbe\|^{-1}, \| m^d\bbe \|^{\frac{1}{d-1}}q^{-1} \}^{\mathcal{C}},
\end{align*}
which implies Hypothesis \ref{hyp:2.1} with $C=C_0^{\frac{1}{2^d}}$.

We may therefore proceed under the assumption that
$\eta_0P\in \ZZ\cap [1,P]$. In this case
$$
C_0^{\frac{1}{2^d\mathcal{C}}}\min \left\{   \frac{|S( \bal;P)| }{q^{n P}} ,  \frac{|S( \bal + \bbe;P)| }{q^{n P}}   \right\}^{-1/\mathcal{C}}=
\gamma^{1/\mathcal{C}}=
q^{N+\ve_0}.
$$
Since $N+\ve_0\geq N=\eta_0P$, we obtain
$$
C_0^{-\frac{1}{2^d  \mathcal{C} } } \min \left\{   \frac{|S( \bal;P)| }{q^{nP}} ,  \frac{|S( \bal + \bbe;P)| }{q^{nP}}   \right\}^{1/\mathcal{C}}
\leq  q^{-\eta_0P}.
$$
Suppose that alternative (i) holds in
Lemma~\ref{last1}, with $J=\eta_0P$ and $\{m^d\bbe \}$. 
Then
it immediately follows that
Hypothesis \ref{hyp:2.1} holds with $C=C_0^{\frac{1}{2^d}}$.
If, on the other hand, alternative (ii) holds, then
$$
q^{\eta_0 P -dP+d-1}\leq
|\{ m^d\bbe \} |\leq q^{-\eta_0P(d-1)+d-2}
$$
 and
$
N^{(d-1)}( \eta_0 P ; \{m^d \bbe\})=
N^{\textnormal{aux}}( \eta_0 P ;  \{m^d\bbe\} t^{dP-\eta_0 P} ),
$
in the notation of
\eqref{eq:rabbit} and \eqref{def Nv}.
We have
\begin{align*}
q^{n(d-1)\eta_0P} \gamma^{-2^d}
&=
C_0^{-1}
q^{n(d-1)\eta_0P}
 \min \left\{   \frac{|S( \bal;P)| }{q^{n P}},  \frac{|S( \bal + \bbe;P)| }{q^{n P}}  \right\}^{2^d}\\
&\leq
C_0^{-1}
N^{(d-1)}( \eta_0 P ; m^d \bbe)\\
&= 
C_0^{-1}
N^{(d-1)}( \eta_0 P ; \{m^d \bbe\})\\
&= C_0^{-1}
N^{\textnormal{aux}}( \eta_0 P ;  \{m^d\bbe\} t^{dP-\eta_0 P} ),
\end{align*}
by Lemma \ref{last2}.
We would now like to apply Corollary \ref{c:Nbf}  to estimate the right hand side.
Since   we are working under the assumption that
$
|\{m^d\bbe\} t^{dP-\eta_0 P}|\geq
q^{d-1},
$
we obtain
\begin{align*}
N^{\textnormal{aux}}( \eta_0 P ;  \{m^d\bbe\} t^{dP-\eta_0 P} )
 &\leq (d-1)^n  q^{n(d-1)\eta_0P}q^{-2^{d+\nu(d)}(d-1)\mathcal{C} \theta_d(\eta_0P)},
  \end{align*}
  where $\nu(d)$ is given by \eqref{eq:nu-d}.
Note that $\gamma^{-2^d}=q^{-(N+\ve_0)2^d\mathcal{C}}
=q^{-\eta_0P 2^d\mathcal{C}-\ve_02^d\mathcal{C}}.
$
Hence we conclude that
$$
q^{-\ve_02^d\mathcal{C}}\leq C_0^{-1}(d-1)^n
q^{\eta_0P 2^d\mathcal{C}}\cdot
q^{-2^{d+\nu(d)}(d-1)\mathcal{C} \theta_d(\eta_0P)}.
$$
Suppose first that $d\geq 3$, so  that  $\nu(d)=0$. If  $\eta_0P\geq 2$ then $\theta_d(\eta_0P)=\frac{\eta_0 P}{d-1}+1$ and 
we may deduce that
$
q^{-\ve_02^d\mathcal{C}}\leq
q^{-2^d(d-1)\mathcal{C}}.
$
If $\eta_0P=1$  then  $\theta_d(\eta_0P)=1$ and 
we may deduce that
$
q^{-\ve_02^d\mathcal{C}}\leq
q^{2^d\mathcal{C}}\cdot q^{-2^d (d-1)\mathcal{C}}=q^{-2^d(d-2)\mathcal{C}}.
$
Neither of these cases is  possible, since  $\ve_0<1\leq d-2$. 
 If $d=2$, on the other hand, then
$\nu(d)=1$ and
$\theta_d(\eta_0P)=\eta_0P$, whence
$
q^{-4\ve_0\mathcal{C}}\leq
q^{4\mathcal{C}\eta_0P }\cdot
q^{-8\mathcal{C}\eta_0P}=q^{-4\mathcal{C}\eta_0P }.
$
This too is a contradiction.

\subsection*{The case $N= 0$}
Suppose that alternative (i) holds in
Lemma~\ref{last1}, with $J=1$ and $\{m^d\bbe\}$. 
Then
$$
q^{-1} \leq  \max \left\{  q^{- d P + d- 2 } \|m^d\bbe\|^{-1}, \|m^d\bbe\|^{\frac{1}{d-1}} q^{- 1}
\right\}.
$$
We can't have  $q^{-1} \leq  \|m^d\bbe\|^{\frac{1}{d-1}} q^{-1}$, since $\|m^d\bbe \|< 1$. 
Thus 
$
q^{-1} \leq    q^{- d P + d- 2 } \|m^d\bbe \|^{-1},
$
whence
$1\leq    q^{- d P + d- 1 } \|m^d\bbe\ |^{-1}.$ It follows that
\begin{align*}
\min \left\{   \frac{|S( \bal;P)| }{  q^{n P} },  \frac{|S( \bal + \bbe;P)| }{   q^{n P} }   \right\}^{2^d}
\hspace{-0.2cm}
&=C_0q^{-2^d\ve_0 \mathcal{C}}\\
&\leq
C_{0}  \left(  q^{- d P + d - 1 } \|m^d\bbe\|^{-1} \right)^{2^d \mathcal{C}}
\\
&\leq
C_{0}  \max \left\{  q^{- d P + d - 1	 } \|m^d\bbe\|^{-1}, \|m^d\bbe\|^{\frac{1}{d-1}} q^{-1} \right\}^{2^d \mathcal{C}}.
\end{align*}
This  therefore implies
Hypothesis \ref{hyp:2.1} with $C = C_{0}^{\frac{1}{2^d } }$.

It remains to deal with alternative (ii)
of Lemma~\ref{last1}, with $J=1$ and $\{m^d\bbe\}$.
Then
$q^{- dP + d }   \leq  \|m^d\bbe\|  \leq q^{- 1}$
and
$N^{(d-1)}(1;  \{m^d\bbe\}) =
N^{\textnormal{aux}}( 1;  \{m^d\bbe\} t^{dP-1} ).
$
Taking $J=1$ in
Lemma \ref{last2}, we find that
\begin{align*}
C_{0}  q^{ -  2^d \varepsilon_0 \mathcal{C}}
&=
\min \left\{   \frac{|S( \bal;P)| }{  q^{n P} },  \frac{|S( \bal + \bbe;P)| }{   q^{n P} }  \right\}^{2^d}
\\
&\leq 
q^{- n (d-1) } N^{(d-1)} (1; m^d \bbe)\\
\\
&=
q^{- n (d-1) } N^{(d-1)} (1; \{m^d \bbe\})\\
&=
q^{- n (d-1) } N^{\textnormal{aux}}( 1;  \{m^d\bbe\} t^{dP-1} ).
\end{align*}
Since $\nu(d)\geq 0$, 
an application of Corollary \ref{c:Nbf} now yields
\begin{align*}
C_{0}  q^{ -  2^d \varepsilon_0 \mathcal{C}}
&\leq
q^{- n (d-1) }\cdot
 (d-1)^n  q^{n(d-1)}\cdot q^{-2^d(d-1)\mathcal{C} \theta_d(1)}\\
&=
 (d-1)^n
 q^{-2^d (d-1)\mathcal{C}\theta_d(1)}.
\end{align*}
We have $\theta_d(1)=1$ for $d\geq 2$ and we see that this
is a  contradiction, since $C_0=(d-1)^n$ and $\ve_0<1\leq d-1$.

\begin{proof}[Proof of Theorem \ref{t:simon}]

Let $\FF_q$ be a fixed finite field of characteristic $>d$.
Under the hypotheses of the theorem we have $q\geq (d-1)^n$.
Let $f_1, \ldots, f_R \in \FF_q[x_1, \ldots, x_n]$ be degree $d$ forms,  cutting
out a smooth complete intersection in $\PP^{n-1}$.
In particular $\sigma_\f\leq R-1$ in
Theorem \ref{p2.1}, by Remark \ref{rem:sigma-f}.

The proof of
Theorem \ref{t:simon} is
deduced from Theorem \ref{p2.1} in precisely the same way that
Theorem~4.1.2 is deduced from Equation (4.9.1) in
\cite{lee}, the details of which will not be  repeated here.
We may suppose that we are given $m\in \FF_q[t]\setminus\{0\}$ and $\b\in \FF_q[t]^n$
with $\deg b_i<\deg m$
and $\gcd(\b,m)=1$, and
such that  the system of equations
$$
f_1(m\x+\b)=\dots=f_R(m\x+\b)=0
$$
is everywhere locally soluble.
Under these assumptions it will then suffice to prove that
$N(\f;P,m,\b)>0$, for sufficiently large values of $P$, in the notation of \eqref{eq:defn-NP}.

We fix the choice of
$\mathcal{C}$ in \eqref{setcalC}.
Since $q\geq (d-1)^n$, it follows that
$$
\left( 1 -  \log_q (d-1) \right)n\geq n-1.
$$
Hence $\mathcal{C}>dR$
holds under the assumption
$n> d(d-1)2^{d+\nu(d)}R+R$, where
$\nu(d)$ is given by \eqref{eq:nu-d}.
In the proof of  Theorem \ref{t:simon},  all implied constants are allowed to depend on
$q$, on $d,n,R$, and on $m$ and $\b$.
In particular, it follows that
Hypothesis~\ref{hyp:2.1} holds with $C=O(1)$.
Thus  Theorem
\ref{p2.1} yields  $\delta>0$ such that
$$
N(\f;P,m,\b)= \mathfrak{S} \mathfrak{I} q^{(n - dR)P-dR\deg m}+
O\left(
q^{ (n - dR -  \delta) P }\right).
$$
Moreover, Lemmas \ref{lem2.5iv} and  \ref{si} yield $ \mathfrak{S} \mathfrak{I}>0$.
\end{proof}

\section{Geometric deductions}

We will use  the argument deployed in \cite{BV'} to establish the irreducibility
 and dimension of the space $\mathrm{M}_{e,b}$ that is the object of Theorem \ref{t:BV2}.
 This is based on a counting argument over a finite field $k=\FF_q$
 whose characteristic is greater than the degrees $d$ of the  forms $f_1,\dots,f_R\in \FF_q[x_1,\dots,x_n]$ that define $X$. We may further assume that $q$
 is arbitrarily large.
 We fix a sufficiently small parameter $\ve>0$ and
we  henceforth suppose that $q>
  (d-1)^{1/\ve}$, so that
\begin{equation}\label{eq:0,ep}
0< \frac{ \log (d-1)}{\log q} <\ve.
\end{equation}
We may further assume that
$p_1,\dots,p_b\in \PP^1(\FF_q)$ and $y_1,\dots,y_b\in X(\FF_q)$ in
the definition
\eqref{eq:def_M} of $\mathrm{M}_{e,b}$.
After a coordinate change we may also suppose without loss of generality that none of $p_1,\dots,p_b$ lie at infinity.  In particular, there exist
 $c_1,\dots,c_b\in \FF_q$ and  $\a_1,\dots,\a_b\in \FF_q^n$ such that
 $p_j=(c_j:1)$ and $y_j=(a_{1,j}:\dots:a_{n,j})$, for $1\leq j\leq b$.

\begin{defi}\label{def:1}
 Let $N(q,e;\a_1,\dots,\a_b)$ be the number of tuples  $\mathbf{g}\in \FF_q[t]^n$, of degree at most $e$, at least one of degree exactly $e$, with no common zero, such that $f_1(\g)=\dots=f_R(\g)=0$
 and $\mathbf{g}(c_j)=\a_j$, for $1\leq j\leq b$. We simply write 
$N(q,e)$ for this quantity when $b=0$.
 \end{defi}

\begin{prop}\label{unfree-bound}
We have   
$$
(q-1)\#\mathrm{M}_{e,b}(\mathbb F_q)=\sum_{\lambda_1,\dots,\lambda_{b}\in \FF_q^*}N(q,e;
\lambda_1 \a_1,\dots \lambda_b \a_b).
$$
\end{prop}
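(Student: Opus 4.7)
The plan is to establish a bijection (essentially, a parametrization by representatives) between the two sides. A point of $\mathrm{M}_{e,b}(\FF_q)$ is a morphism $g\colon \PP^1\to X$ of degree $e$, defined over $\FF_q$, which satisfies $g(p_j)=y_j$ for $1\leq j\leq b$. Since the $p_j$ are affine after our coordinate change, I would begin by identifying such a morphism with its standard tuple of $\FF_q[t]$ coordinates, as follows. After dehomogenisation, $g$ is represented by an $n$-tuple $\g=(g_1,\dots,g_n)\in \FF_q[t]^n$ with each $g_i$ of degree at most $e$, at least one of degree exactly $e$ (so that the total degree of the morphism is $e$), having no common zero in $\bar\FF_q$ (so that $g$ is well-defined on every point of $\PP^1$). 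Two tuples $\g,\g'$ define the same morphism if and only if $\g'=\mu\g$ for some $\mu\in\FF_q^*$, and the condition that $g$ maps into $X$ is equivalent to $f_k(\g)=0$ identically in $\FF_q[t]$ for $1\leq k\leq R$.

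Next I would reinterpret the interpolation condition. Since $y_j=[\a_j]\in \PP^{n-1}(\FF_q)$ and $p_j=(c_j:1)$, the requirement $g(p_j)=y_j$ holds precisely when $\g(c_j)$ is a nonzero scalar multiple of $\a_j$, i.e.\ there exists a (necessarily unique) $\lambda_j\in \FF_q^*$ with $\g(c_j)=\lambda_j\a_j$. In particular, any valid representative $\g$ determines a unique tuple $(\lambda_1,\dots,\lambda_b)\in (\FF_q^*)^b$.

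Now I would count both sides. The right hand side
\begin{equation*}
\sum_{\lambda_1,\dots,\lambda_{b}\in \FF_q^*}N(q,e;\lambda_1 \a_1,\dots,\lambda_b \a_b)
\end{equation*}
is, by Definition \ref{def:1}, the number of tuples $\g\in\FF_q[t]^n$ of degree $\le e$ (at least one of degree $e$), with no common zero, satisfying $f_k(\g)=0$ for $1\leq k\leq R$, and such that for each $1\le j\le b$ there exists some $\lambda_j\in\FF_q^*$ with $\g(c_j)=\lambda_j\a_j$ (the uniqueness of $\lambda_j$ from $\g$ ensuring that the sum is a disjoint union). By the discussion above, this is exactly the set of admissible representatives $\g$ for morphisms lying in $\mathrm{M}_{e,b}(\FF_q)$. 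Finally, the fibres of the map $\g\mapsto [\g]$ from this set to $\mathrm{M}_{e,b}(\FF_q)$ are precisely the $\FF_q^*$-orbits $\{\mu\g:\mu\in\FF_q^*\}$, each of size $q-1$ (since the no-common-zero condition guarantees $\g\neq \mathbf{0}$). Dividing by $q-1$ yields the identity.

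There is no serious obstacle; the only point requiring a touch of care is the disjointness of the sum on the right, which follows from the fact that $\a_j\neq\0$ forces $\lambda_j$ to be uniquely determined by $\g(c_j)$, so no tuple $\g$ is counted in two different terms.
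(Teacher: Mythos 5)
Your proposal is correct and follows essentially the same route as the paper: the paper introduces the set $\mathrm N_{e,b}$ of tuples $\g$ with $\g(c_j)\in\mathbb{G}_m(\FF_q)\a_j$, identifies it with $(q-1)\#\mathrm{M}_{e,b}(\FF_q)$ via the scalar action, and observes that $\#\mathrm N_{e,b}$ equals the sum over the $\lambda_j$, which is exactly your argument including the implicit disjointness point you make explicit.
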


\begin{proof}
 Let $\mathrm N_{e,b}$ be the set of tuples  $\mathbf{g}=(g_1,\dots,g_n)\in \FF_q[t]^n$, of degree at most $e$, at least one of degree exactly $e$, with no common zero, such that $f_1(\g)=\dots=f_R(\g)=0$
 and $\mathbf{g}(c_j)\in \mathbb{G}_m(\FF_q)\a_j$, for $1\leq j\leq b$.
Taking
projective coordinates
$(g_1: \dots : g_n)$,
any $\bfg\in \mathrm N_{e,b}$  defines
a degree $e$ map $g:\mathbb P^1 \to X$ over $\FF_q$,
such that $g(p_j)=y_j$ for $1\leq j\leq b$.
 All such maps arise this way, and two tuples define the same map if and only if
 one is a scalar multiple of the other.
 Hence
 $$
\#\mathrm N_{e,b}=  \#\mathbb{G}_m(\FF_q) \#\mathrm{M}_{e,b}(\mathbb F_q)
=(q-1)\#\mathrm{M}_{e,b}(\mathbb F_q).
 $$
Finally, it is clear from the definition that  $\#\mathrm N_{e,b}$  is equal to the right hand side of the expression in the statement of the proposition. 
\end{proof}

\begin{prop}\label{unfree-bound2}
We have
$$
\#\mathcal M_{0,0}(X,e)(\FF_q)= \frac{N(q,e)}{(q-1)
(q^{3}-q)} .
$$
\end{prop}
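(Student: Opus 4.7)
The plan is to relate $\#\mathcal{M}_{0,0}(X,e)(\FF_q)$ to $\#\Mor_e(\PP^1,X)(\FF_q)$ via the reparametrization action of $\mathrm{PGL}_2$ on the source $\PP^1$, and then to express $\#\Mor_e(\PP^1,X)(\FF_q)$ in terms of $N(q,e)$ using the $b=0$ case of the bijection underlying Proposition \ref{unfree-bound}.

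For the first step I would apply the argument of Proposition \ref{unfree-bound} in the case $b=0$, where $\mathrm{M}_{e,0}=\Mor_e(\PP^1,X)$ and the sum over $\lambda_1,\dots,\lambda_b$ collapses to the single term $N(q,e)$. The content is that a tuple $\g\in\FF_q[t]^n$ as in Definition \ref{def:1} determines a degree $e$ morphism $\PP^1\to X$ via projective coordinates $[g_1:\cdots:g_n]$, with two tuples defining the same morphism iff they differ by a scalar in $\FF_q^{\ast}$. This gives $(q-1)\#\Mor_e(\PP^1,X)(\FF_q)=N(q,e)$.

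For the second step, I would use that $\mathcal{M}_{0,0}(X,e)=\Mor_e(\PP^1,X)/\mathrm{PGL}_2$, where $\mathrm{PGL}_2$ acts by automorphisms of the source $\PP^1$: two morphisms define the same rational curve on $X$ iff they differ by precomposition with an element of $\mathrm{PGL}_2$. I would show that this action is free on $\FF_q$-points (non-trivial stabilizers arising only on the proper closed locus of multiple covers of lower-degree rational curves in $X$), and then invoke Lang's theorem: since $\mathrm{PGL}_2$ is a connected algebraic group, every $\mathrm{PGL}_2$-torsor over the finite field $\FF_q$ is trivial. Consequently, each $\FF_q$-point of the quotient has preimage of size exactly $|\mathrm{PGL}_2(\FF_q)|=q(q-1)(q+1)=q^3-q$, which combined with the first step yields the desired formula.

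The main technical obstacle is verifying that the $\mathrm{PGL}_2$-action is genuinely free at the level of $\FF_q$-points, so that the Lang argument produces precisely $q^3-q$ preimages per $\FF_q$-point rather than a weighted count. This reduces to controlling the locus of morphisms $g:\PP^1\to X$ with non-trivial stabilizer, i.e.\ those that factor as $g=g'\circ\phi$ with $\phi\in\mathrm{PGL}_2$ of finite order and $g'$ of strictly smaller degree; such multiple covers form a proper closed subvariety of $\Mor_e(\PP^1,X)$ and so can be excised without affecting the equality.
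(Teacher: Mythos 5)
Your proposal takes essentially the same route as the paper: the paper's proof consists precisely of the assertion that each point of $\mathcal M_{0,0}(X,e)$ corresponds to $\#\mathrm{PGL}_2(\FF_q)=q^3-q$ points of $\Mor_e(\PP^1,X)$, followed by the $b=0$ case of Proposition \ref{unfree-bound}, which is your identity $(q-1)\#\Mor_e(\PP^1,X)(\FF_q)=N(q,e)$. Your appeal to Lang's theorem, to see that every $\FF_q$-point of the quotient lifts and has exactly $q^3-q$ rational preimages, is a sensible justification of the step that the paper simply asserts. The one place where your write-up overreaches is the closing remark that the multiple-cover locus, where the $\mathrm{PGL}_2$-action can have non-trivial stabilizers, ``can be excised without affecting the equality'': the proposition claims an exact identity, and deleting a (possibly non-empty) closed locus changes the two sides by different amounts, so excision cannot restore exactness. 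The clean way to treat this point is to count points of the quotient in the groupoid sense, where Lang's theorem for the connected group $\mathrm{PGL}_2$ gives exactly $\#\Mor_e(\PP^1,X)(\FF_q)/(q^3-q)$; since the paper's own two-line proof does not address this subtlety either, your argument is no less complete than the one in the text, and apart from that final remark it matches the paper's.
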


\begin{proof}
Each point of $\mathcal M_{0,0}(X,e)$ corresponds to $\#\mathrm{PGL}_2(\mathbb F_q) = q^3-q$
points of $\Mor_e(\PP^1,X)$. The result now follows on  taking $b=0$ in Proposition \ref{unfree-bound}.
\end{proof}

We shall prove an explicit upper bound for the counting function defined in Definition \ref{def:1}. The   following result will be obtained by adapting the proof of Theorem~\ref{p2.1}.

\begin{theorem}\label{t:goat}
Let $d\geq 2, $ and $b\geq 0$ be integers, and recall the definition \eqref{eq:nu-d} of $\nu(d)$.
Let
$f_1,\dots,f_R\in \FF_q[x_1,\dots,x_n]$ be as above, let 
$c_1,\dots,c_b\in \FF_q$ and let $\a_1,\dots,\a_b\in \FF_q^n$.
Assume
 that $
n\geq  (d(d-1)2^{d+\nu(d)+1}+1)R$
and
$$
e\geq (d+1)b +
\max\left\{1,
bR\right\}.
$$
Then
$$
 \lim_{q\to \infty}  q^{
 -\left(e(n-dR) +(n-R)(1-b)\right)}
 N(q,e;\a_1,\dots,\a_b)\leq 1.
$$
\end{theorem}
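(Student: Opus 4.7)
The strategy is to reduce the count $N(q, e; \a_1, \ldots, \a_b)$ to a circle-method count treatable by Theorem~\ref{p2.1}, and then extract the asymptotic behaviour in $q$ of the main term and the error.

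First I would set $m = \prod_{j=1}^b (t - c_j) \in \FF_q[t]$, a monic polynomial of degree $b$, and take $\b \in \FF_q[t]^n$ to be the unique CRT-interpolant with $\deg b_i < b$ and $\b(c_j) = \a_j$. Since each $\a_j$ is a non-zero point of $X$, one has $\gcd(\b, m) = 1$ and $f_i(\b) \equiv 0 \pmod{m}$. Setting $P = e - b + 1$ and writing $\g = m\x + \b$ with $|\x| < q^P$ identifies the tuples counted by $N(q, e; \a_1, \ldots, \a_b)$ with a subset of those counted by $N(\f; P, m, \b)$, since the additional conditions ``no common zero'' and ``at least one component of exact degree $e$'' only restrict the count. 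A short calculation yields the identity
$$(n - dR) P - dR \deg m = e(n - dR) + (n - R)(1 - b) - R(d + b - 1).$$
The hypothesis $e \geq (d+1) b + \max\{1, bR\}$ is calibrated so that the condition $P > R(d-1) + d\deg m$ of Theorem~\ref{p2.1} is met (with a boundary case handled by a minor adaptation of its proof), and the theorem supplies
$$N(\f; P, m, \b) = \mathfrak{S}\mathfrak{I}\, q^{(n - dR)P - dR\deg m} + O_\varepsilon(E).$$

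Next I would analyse $\mathfrak{S}$ and $\mathfrak{I}$ in the limit $q \to \infty$. For the singular series, Lemma~\ref{lem:snorlax} gives $\mathfrak{S} = |m|^R \sum_g A(g)$ where $g$ ranges over monics coprime to $m$. Factoring as an Euler product and applying Proposition~\ref{pro:sga}, each local factor satisfies $\mu_\pi = 1 + O(|\pi|^{-(1 - 1/d)(n/d - R)})$, and since every prime $\pi$ has $|\pi| \geq q$, the product tends to $1$ as $q \to \infty$ under the standing assumption $(1 - 1/d)(n/d - R) > 1$. Hence $\mathfrak{S} = q^{Rb}(1 + o(1))$. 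For the singular integral, I would exploit the fact that $f_1, \ldots, f_R$ have constant coefficients in $\FF_q$: I claim that $S_\infty(\bga) = 1$ identically on the set $\{|\bga| < q^{d-1}\}$, which has measure $q^{R(d-1)}$. Writing $\bga \cdot \f(\v)$ as a Laurent series in $t$, the coefficient of $t^{-1}$ must vanish, because the highest powers in $\bga$ contribute at most $t^{d-2}$ and the lowest-order terms of $\f(\v)$ for $\v \in \TT^n$ are of order $t^{-d}$, so every product has order at most $t^{-2}$. The tail $\int_{|\bga| \geq q^{d-1}} |S_\infty(\bga)|\, \d\bga$ is then controlled by Lemma~\ref{lem:est2} via a geometric series whose sum is $o(q^{R(d-1)})$ when $n > dR$. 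Consequently $\mathfrak{I} = q^{R(d-1)}(1 + o(1))$, and multiplying yields
$$\mathfrak{S}\mathfrak{I}\, q^{(n - dR)P - dR\deg m} = q^{e(n - dR) + (n - R)(1 - b)}(1 + o(1)).$$

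Finally, each summand of the error $E$ from Theorem~\ref{p2.1} must be shown to be $o\big(q^{e(n - dR) + (n - R)(1 - b)}\big)$. This is routine given the hypothesis $n \geq (d(d-1) 2^{d+\nu(d)+1} + 1) R$, which guarantees $\mathcal{C} > dR$ (hence $\delta_1 > 0$) and $(1 - 1/d)(n/d - R) > 1$ (hence $\delta_2 > 0$); the lower bound on $e$ then forces the leading exponent of each error term to lie strictly below that of the main term. The principal difficulty lies in the asymptotic analysis of $\mathfrak{I}$: pinning down both the exact $q^{R(d-1)}$ contribution from the small-$\bga$ regime and verifying that the tail is of genuinely lower order requires careful bookkeeping with the bounds in Lemma~\ref{lem:est2}, with the large-$n$ hypothesis providing the needed convergence margin.
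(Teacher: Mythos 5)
Your reduction to $N(\f;e+1-b,m,\s)$ with $m=\prod_j(t-c_j)$ and a CRT interpolant is exactly the paper's set-up, and your asymptotics $\mathfrak S=q^{Rb}(1+o(1))$ and $\mathfrak I=q^{R(d-1)}(1+o(1))$ are correct (they are the analogues of Lemmas \ref{ssssssssss} and \ref{intttt}). The gap is in the last step, where you apply Theorem \ref{p2.1} directly and call the error analysis routine. First, Theorem \ref{p2.1} requires $P>R(d-1)+d\deg m$; with $P=e+1-b$ and $\deg m=b$ this reads $e>(d+1)b+R(d-1)-1$, which fails at the minimal admissible $e$ whenever $b<d-1$ (for instance $b=0$, $e=1$, or $b=1$, $d\geq 3$, $e=(d+1)+R$). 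This is not a boundary case curable by a minor adaptation: the whole dissection in the deduction of Theorem \ref{p2.1} is anchored at the choice \eqref{eq:lower-mogo}, which needs $P-1-d\deg m\geq R(d-1)$.

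Second, and more seriously, even where Theorem \ref{p2.1} applies its error terms are not $o$ of the main term in the stated range of $e$. The target exponent is $(n-dR)P+(d-1)R(1-b)$, so the second error term is negligible only if $\delta_1\bigl(e-(d+1)b\bigr)>1+R+(2d-1)Rb$, and the third only if $\delta_2\bigl(e-(d+1)b-R(d-1)\bigr)>n-(d-1)R+\ve P$. But $\delta_1\asymp d\,2^{\nu(d)+1}$ is bounded independently of $R$, so for $b=0$, $e=1$ and $R$ large the first of these inequalities fails badly; and at the minimal $e$ with $b\geq 1$, $d\geq 3$ one has $e-(d+1)b-R(d-1)\leq R(2-d)\leq 0$, so the second fails as well. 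In effect your route only proves the theorem with a lower bound of the shape $e\gtrsim (d+1)b+(d-1)R+O(d^2)$, not the stated $e\geq(d+1)b+\max\{1,bR\}$ (and in particular not $e\geq 1$ when $b=0$, which is what Theorem \ref{t:BV} needs). This is precisely why the paper does not invoke Theorem \ref{p2.1} here: it evaluates the level-zero major arc $\mathfrak M(0)$ by hand, getting the main term with relative error $O(q^{-(n-R-1)/2})$, and then bounds each annulus $\mathfrak M(J+1)\setminus\mathfrak M(J)$ separately via Lemma \ref{lem:IJ-small} (small $J$, using the $\mathcal S(i)$ and $\mathcal I(j)$ bounds) and Lemma \ref{lem:IJ-large} (large $J$, Weyl differencing fed into Lemma \ref{lem2.2}). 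Since $q\to\infty$, one only has to compare exponents of $q$ level by level in $J$, and the crucial point is that the Weyl saving at the crossover $J\approx e-(d+1)b$ is already $\geq\frac{n-R+1}{2^{d}}\gg R$, i.e.\ linear in $n$ rather than proportional to $\delta_1 P$; this is what permits the stated range of $e$, the verification reducing to the four inequalities \eqref{eq:FIN1}--\eqref{eq:FIN5}. Also note that the principal difficulty is this error analysis for small $e$, not the asymptotic evaluation of $\mathfrak I$, which is short.
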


\begin{proof}[Proof of Theorems \ref{t:BV} and \ref{t:BV2}]
Assume that
$n\geq  (d(d-1)2^{d+\nu(d)+1}+1)R$ and $e\geq 1$.
Once combined with Proposition
\ref{unfree-bound2},
 it follows from \cite[Eq.~(3.3)]{BV'} that the case $b=0$ of Theorem~\ref{t:goat} implies that  $\mathcal{M}_{0,0}(X,e)$ is irreducible and of the expected dimension, as
 required for Theorem \ref{t:BV}.
Appealing to the  same method used in \cite[p.~2]{HRS}, moreover, it also follows that   $\mathcal M_{0,0}(X,e)$ is locally a complete intersection, which thereby completes the proof
of Theorem \ref{t:BV}.
Arguing along the same lines, we may   deduce Theorem~\ref{t:BV2} through the union of
Proposition~\ref{unfree-bound} and Theorem \ref{t:goat}.
\end{proof}

It remains to prove Theorem \ref{t:goat}.
For now, let $e\geq 1$ be an arbitrary integer.
In all the estimates that follow, the implied constants are only allowed to depend on $d,n$ and $R$, unless indicated otherwise.
We shall drop the coprimality condition from the definition of $N(q,e,b)$, together with the constraint
that at least one of the polynomials in the tuple $g_1,\dots,g_n$ has exact degree $e$.
Moreover, for each index $1\leq j\leq b$,
the condition
$\mathbf{g}(c_j)=\a_j$ is equivalent to the congruence condition
$
\g\equiv \a_j \bmod{(t-c_j)}.
$
In this way, we see that
$$
N(q,e;\a_1,\dots,\a_b)\leq \hat N(q,e,b),
$$
where
$$
\hat N(q,e,b)=
\#\left\{
\mathbf{g}\in \FF_q[t]^n :
\begin{array}{l}
\deg g_1,\dots,\deg g_n \leq e\\
f_1(\g)=\dots=f_R(\g)=0\\
\text{$\mathbf{g}\equiv \a_j \bmod{(t-c_j)}$ for $1\leq j\leq b$}
 \end{array}
 \right\}.
$$
Here we recall that $c_1,\dots,c_b\in \FF_q$ and
$\a_1,\dots,\a_b\in \FF_q^n$
 are given, with $c_1,\dots,c_b$ all distinct.
 Since $\a_1,\dots,\a_b$ are vectors representing $\FF_q$-points on $X$, we also have  $f_k(\a_j)=0$ for $1\leq k\leq R$ and $1\leq j\leq b$.
  Let
 $$
 m=(t-c_1)\cdots(t-c_b).
 $$
This is a square-free monic  polynomial in $\FF_q[t]$ of degree $b$.
 It follows from the Chinese remainder theorem that there exists $\mathbf{s}\in \FF_q[t]^n$, with
$\gcd(\mathbf{s},m)=1$ and
 $\deg s_i<b$ for $1\leq i\leq n$, such that $f_i(\mathbf{s})\equiv 0 \bmod{m}$ for $1\leq i\leq R$ and 
 $$
\hat N(q,e,b)=
\#\left\{
\mathbf{g}\in \FF_q[t]^n :
\begin{array}{l}
\deg g_1,\dots,\deg g_n \leq e\\
f_1(\g)=\dots=f_R(\g)=0\\
\mathbf{g}\equiv \mathbf{s}  \bmod{m}
 \end{array}
 \right\}.
$$
It is now clear that  $\hat N(q,e,m)=N(\f,e+1-b,m,\mathbf{s})$, in the notation of \eqref{eq:defn-NP}. It follows from
\eqref{orthog} that
$$
\hat N(q,e,m)=
\int_{\TT^R}
S(\bal)\d \bal,
$$
where $S(\bal)=S(\bal;e+1-b,m,\mathbf{s})$ is given by \eqref{eq:sum-S}.

Recall the definition of the major arcs of level $J$ that were introduced in \eqref{defn major arc}.
Taking  $P=e+1-b$, these  take the shape
\begin{equation}\label{eq:MM-J}
\mathfrak{M}(J)=\bigcup_{\substack{g\in \FF_q[t] \text{ monic}\\
0<|g|\leq q^J
}}
\bigcup_{\substack{\a\in \FF_q[t]^R\\
|\a|<|gm^d|\\ \gcd(\a,g)=1}} \left\{\bal\in \TT^R: \left|gm^d\bal-\a\right|<q^{J-de+db}\right\}.
\end{equation}
Note that $\mathfrak{M}(-1)=\emptyset$. Moreover,  the function field
version of Dirichlet's theorem in \cite[Lemma 4.5.1]{lee}
shows that for any $\bal\in \TT^R$  there exist
monic $g\in \FF_q[t]$ and a vector $\a\in \FF_q[t]^R$,
with $|\a|< |gm^d|$ and $\gcd(g,\a)=1$,
 such that
$|g|\leq q^J$ and
$\left|gm^d\bal-\a\right|^R<q^{-J}$. Hence it follows that  $\mathfrak{M}(J)=\TT^R$ if $J\geq Rd(e-b)/(R+1)$.
We proceed by considering the contribution from the major arc of level $J=0$.

\begin{lemma}
We have
$$
\int_{\mathfrak{M}(0)}
S(\bal)\d \bal
= q^{e(n-dR) +(n-R)(1-b)}\left(1+O(
q^{-(n-R-1)/2}
)\right).
$$
\end{lemma}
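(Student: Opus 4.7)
My plan is to exploit the observation that the major arc set $\mathfrak{M}(0)$ from \eqref{eq:MM-J} is precisely the $J=0$ piece of the major arc decomposition, corresponding to the single choice $g=1$. As a result, no singular series analysis is needed: only the local factor at infinity (the singular integral) and a simple character sum enter. On each piece of $\mathfrak{M}(0)$, I will invoke Lemma~\ref{lem2.4} (applied with the $g$ of that lemma taken to be $m^d$) to factorise the exponential sum as
$$
S\!\left(\frac{\a}{m^d}+\bbe;P\right)=q^{nP}S_{m^d}(\a)S_\infty(m^d t^{dP}\bbe),
$$
with $P=e+1-b$. The size hypothesis $|m^d|\leq q^P$ reduces to $e\geq (d+1)b-1$, and matching the $|\bbe|$-box from $\mathfrak{M}(0)$ into the lemma's permitted range reduces to $e\geq (d+1)b$. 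Both are comfortably satisfied since $e\geq (d+1)b+\max\{1,bR\}$.

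After substitution the integral factorises as
$$
\int_{\mathfrak{M}(0)} S(\bal)\,\d\bal = q^{nP}\Biggl(\sum_{|\a|<|m^d|}S_{m^d}(\a)\Biggr)\int_{|\bbe|<q^{-d(e-b)-db}} S_\infty(m^d t^{dP}\bbe)\,\d\bbe.
$$
The sum over $\a$ evaluates to $|m|^R$: this is the $g=1$ case of Lemma~\ref{lem:snorlax}, but it can also be obtained directly via orthogonality together with the identity $N^\dagger(m^d)=|m|^{(d-1)(n-R)}$ that comes from iterating \eqref{eq:seminar}. For the $\bbe$-integral, the substitution $\bbe'=m^d t^{dP}\bbe$ has Jacobian $|m|^{dR}q^{dPR}$ and rescales the range to $|\bbe'|<q^d$, so the integral equals $|m|^{-dR}q^{-dPR}\mathfrak{I}(d)$ in the notation of \eqref{def si-t}.

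It then remains to evaluate $\mathfrak{I}(d)$ to leading order. Interchanging orders of integration in the definition of $\mathfrak{I}(T)$ and applying \eqref{eq:ortho} coordinatewise in $\bga$ yields the elementary identity
$$
\mathfrak{I}(T)=q^{TR}\,\meas\{\v\in\TT^n: |f_k(\v)|<q^{-T}\text{ for all }1\leq k\leq R\}=q^{TR}m(T),
$$
exactly as in the proof of Lemma~\ref{lem:est2}. Taking $T=d$ and invoking $m(d)=q^{-n}N(t)$ from that same proof gives $\mathfrak{I}(d)=q^{dR-n}N(t)$, where $N(t)$ counts $\FF_q$-points of the affine cone $f_1=\dots=f_R=0$. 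Since this cone arises from a smooth complete intersection, the estimate \eqref{eq:case1} (itself a consequence of Deligne's bound) yields $N(t)=q^{n-R}\bigl(1+O(q^{-(n-R-1)/2})\bigr)$, so $\mathfrak{I}(d)=q^{(d-1)R}\bigl(1+O(q^{-(n-R-1)/2})\bigr)$.

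Collecting the pieces reduces matters to a short exponent calculation: the main-term prefactor is
$$
q^{nP}\cdot|m|^R\cdot|m|^{-dR}q^{-dPR}\cdot q^{(d-1)R}=q^{(n-dR)P+R(d-1)(1-b)},
$$
and substituting $P=e+1-b$ together with $(n-dR)+R(d-1)=n-R$ collapses this to $q^{e(n-dR)+(n-R)(1-b)}$, which is the required asymptotic. The only conceptual obstacle is checking the applicability of Lemma~\ref{lem2.4} on the full nesting of boxes, which forces the nearly sharp condition $e\geq (d+1)b$; everything else reduces to orthogonality plus a Deligne-level count of the affine $\FF_q$-points on $X$.
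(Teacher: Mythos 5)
Your proposal is correct, and it reaches the stated asymptotic with the right error term, but it travels a different route from the paper's own proof. The paper evaluates the level-$0$ arcs by a direct, self-contained computation: it expands $S(\bal)$ at $\bal=\a/m^d+\bbe$, integrates over the $\bbe$-box exactly using Kubota's orthogonality to convert the integral into the detector $|f_k(\g)|<q^{de}$, writes $\g=\g_0t^e+\k$ and counts the leading coefficients $\g_0$ on the affine cone via Deligne (through Hooley), and finally collapses the complete sum over $\a$ and the residues $\h$ to $q^{dbR}N^\dagger(m^d)$ using \eqref{eq:seminar}. You instead specialise the general major-arc machinery of Section \ref{s:apply} to $J=0$, $g=1$: Lemma \ref{lem2.4} with denominator $m^d$ (your verification that its hypotheses reduce to $e\geq (d+1)b$, comfortably inside the assumption $e\geq (d+1)b+\max\{1,bR\}$, is correct), Lemma \ref{lem:snorlax} at $g=1$ for the complete sum $\sum_{\a}S_{m^d}(\a)=|m|^R$, and the identity $\mathfrak{I}(d)=q^{dR}m(d)=q^{dR-n}N(t)$ extracted from the proof of Lemma \ref{lem:est2} via \eqref{eq:ortho} and \eqref{eq:claim-ms}, followed by \eqref{eq:case1}. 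The two arguments rest on the same arithmetic inputs — the Hensel-type identity \eqref{eq:seminar} giving $N^\dagger(m^d)=|m|^{(d-1)(n-R)}$, and Deligne's bound for the $\FF_q$-points of the affine cone — and your final exponent bookkeeping, using $(n-dR)+(d-1)R=n-R$, matches the paper's. Your version buys economy by reusing lemmas already proved (at the mild cost of needing the hypotheses of Lemma \ref{lem2.4}, hence the condition $e\geq (d+1)b$, whereas the paper's direct computation only invokes $e\geq db+1$); the paper's version is a standalone exact evaluation. One small point you leave implicit is that the $J=0$ boxes indexed by $\a$ are pairwise disjoint, so the integral really does split as a sum over $\a$; this is immediate from the ultrametric inequality since the centres $\a/m^d$ are $q^{-db}$-separated while the boxes have radius $q^{-de}$ with $e>b$ (and is also covered by the general non-overlapping criterion \eqref{eq:overlap}).
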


\begin{proof}
We begin by observing that
\begin{align*}
\int_{\mathfrak{M}(0)}
S(\bal)\d \bal
&=
\sum_{\substack{\a\in \FF_q[t]^R\\
|\a|<|m^d|}}
\sum_{\substack{
\mathbf{g}\in \FF_q[t]^n \\
|\g|<q^{e+1}\\
\mathbf{g}\equiv \mathbf{s}  \bmod{m}}}
\psi\left(\frac{\a .\f(\g)}{m^d}\right)
\int_{|\bbe|<q^{-de} } \psi\left(\bbe. \f(\g)\right) \d \bbe\\
&=
\sum_{\substack{\a\in \FF_q[t]^R\\
|\a|<|m^d|}}
\sum_{\substack{
\mathbf{h}\in \FF_q[t]^n \\
|\h|<|m^d|\\
\mathbf{h}\equiv \mathbf{s}  \bmod{m}}}
\psi\left(\frac{\a .\f(\h)}{m^d}\right)
\sum_{\substack{
\mathbf{g}\in \FF_q[t]^n \\
|\g|<q^{e+1}\\
\mathbf{g}\equiv \mathbf{h}  \bmod{m^d}}}
\int_{|\bbe|<q^{-de} } \psi\left(\bbe. \f(\g)\right) \d \bbe,
\end{align*}
on breaking into residue classes modulo $m^d$.
The inner sum over $\g$ becomes
\begin{align*}
q^{-dRe}
\#\left\{
\mathbf{g}\in \FF_q[t]^n:
\begin{array}{l}
|\g|<q^{e+1}, ~ \mathbf{g}\equiv \mathbf{h}  \bmod{m^d}\\
|f_k(\g)|<q^{de} \text{ for $1\leq k\leq R$}
\end{array}
\right\},
\end{align*}
by \cite[Lemma 1(f)]{kubota}. Any $\g$ in the cardinality to be estimated can be written
as $\g=\g_0t^e+\k$ where $f_k(\g_0)=0$, for $1\leq k\leq R$, and $\k\in \FF_q[t]^n$ has norm
$|\k|< q^e$. Recall our assumption that $e\geq db+1$ in the statement of Theorem \ref{t:goat}. For
 fixed $\g_0$, we must count the number of $\k\in \FF_q[t]^n$ with
$|\k|< q^e$,  such that
 $\k$ lies in a fixed residue class modulo $m^d$. The number of such $\k$ is
 $(q^{e}/|m^d|)^n=q^{n(e-db)}$, since $\deg m=b$. Finally, the number of $\g_0$ is
 $q^{n-R}+O(q^{(n-R+1)/2})$ by Deligne's resolution of the Weil conjectures, as presented in
 Hooley \cite[Thm.~2 and Sec.~5]{hooley}, for example.
 Moreover, the implied constant in this estimate only depends on $d$ and $n$.
It therefore follows that
\begin{align*}
\sum_{\substack{
\mathbf{g}\in \FF_q[t]^n \\
|\g|<q^{e+1}\\
\mathbf{g}\equiv \mathbf{h}  \bmod{m^d}}}
\int_{|\bbe|<q^{-de} } \psi\left(\bbe. \f(\g)\right) \d \bbe
&=q^{-dRe} \cdot q^{n(e-db)} \cdot \left(q^{n-R}+O(q^{(n-R+1)/2})\right)\\
&= q^{e(n-dR) +n-dbn-R}\left(1+O(q^{-(n-R-1)/2}\right).
\end{align*}
Finally, on applying orthogonality of characters, we see that
$$
\sum_{\substack{\a\in \FF_q[t]^R\\
|\a|<|m^d|}}
\sum_{\substack{
\mathbf{h}\in \FF_q[t]^n \\
|\h|<|m^d|\\
\mathbf{h}\equiv \mathbf{s}  \bmod{m}}}
\psi\left(\frac{\a .\f(\h)}{m^d}\right)=q^{dbR} N^\dagger(m^d),
$$
in the notation of \eqref{eq:late}. But it  follows from
\eqref{eq:seminar} that
$$
N^\dagger(m^d)=q^{(d-1)(n-R)b} N^\dagger(m)=q^{(d-1)(n-R)b}.
$$
Combining everything together readily leads to the statement of the lemma.
\end{proof}

 In order to complete the proof of Theorem
\ref{t:goat}, it therefore suffices to show that
\begin{equation}\label{eq:unicorn}
\lim_{q\to \infty} q^{-\left(e(n-dR) +(n-R)(1-b)\right)}\sum_{J=0}^{\lceil Rd(e-b)/(R+1)\rceil-1}\left| \int_{\mathfrak{M}(J+1)\setminus \mathfrak{M}(J)} S(\bal)\d\bal\right| <1.
\end{equation}

Let $\ve>0$ and let
$\mathcal{C}$ be given by
\eqref{setcalC}, where $\nu(d)$ is given by \eqref{eq:nu-d}.
The work in Section \ref{sec4}
shows that
Hypothesis~\ref{hyp:2.1} holds with $C=(d-1)^{\frac{n}{2^d}}$.
It follows from \eqref{eq:0,ep}  that
\begin{equation}\label{eq:egg}
\frac{n-R+1}{2^{d+\nu(d)} (d-1)}>
\mathcal{C} >  \frac{n-R+1}{2^{d+\nu(d)} (d-1)}-\frac{n \ve}{2^d(d-1)}.
\end{equation}
Hence,
on assuming that
$\ve>0$ is chosen to be sufficiently small,
 it follows that $\mathcal{C}>dR$
 if
$n\geq   (d(d-1)2^{d+\nu(d)+1}+1)R$, which we henceforth assume.
(Although it actually suffices to take $n\geq   (d(d-1)2^{d+\nu(d)}+1)R$ here,
this would lead to a significantly  worse lower bound for $e$ in Theorem \ref{t:goat}.)

We need to study the integral
\begin{equation}\label{eq:def_IJ}
I_J=
\int_{\mathfrak{M}(J+1)\setminus \mathfrak{M}(J)} S(\bal)\d\bal,
\end{equation}
where $\mathfrak{M}(J)$ is given by  \eqref{eq:MM-J}.
We shall proceed differently, according to the size of $J$.  Our treatment for larger $J$ is summarised in the following result and is based on the treatment of the minor arcs in Section \ref{s:apply}.

\begin{lemma}\label{lem:IJ-large}
Assume that
$
n\geq
(d(d-1)2^{d+\nu(d)+1}+1)R$, where $\nu(d)$ is given by \eqref{eq:nu-d}.
Then
\begin{align*}
I_J\ll~&
q^{\left(n-\frac{n+1-R}{2^{d+\nu(d)}(d-1)}\right)(e+1-b)+O(\ve)}
+
  q^{1+dRb
+(n - dR)(e+1-b)
-\frac{n-R+1}{2^{d}}\left(1+\left\lfloor \frac{J}{R(d-1)}\right\rfloor\right)
+O(\ve)},
\end{align*}
where  the implied constant in the $O(\ve)$ term depends only on $d,e, n,R$ and $b$.
\end{lemma}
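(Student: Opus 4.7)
The plan is to bound $I_J$ using the minor arc estimate of Corollary \ref{cor:minor est1} combined with the second part of Lemma \ref{lem2.2}. The starting observation is that $\mathfrak{M}(J+1) \setminus \mathfrak{M}(J) \subset \mathfrak{m}(J)$, so I set $J' = 1 + \lfloor J/(R(d-1))\rfloor$, which satisfies $\Delta(J') = R(d-1)(J'-1) \leq J$. By monotonicity of the major arcs this gives $\mathfrak{m}(J) \subset \mathfrak{m}(\Delta(J'))$. Since the useful range of $J$ is $J < Rd(e-b)/(R+1)$ (outside of which $I_J = 0$), a short calculation using $R(d-1) \geq 1$ shows $J' \leq P = e+1-b$, so Corollary \ref{cor:minor est1} is applicable. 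After inserting $\delta_0 = (n-\sigma_\f)/((d-1)2^{d-1}R)$ and $\sigma_\f \leq R-1$ (Remark \ref{rem:sigma-f}), it yields
$$
\sup_{\bal \in \mathfrak{m}(\Delta(J'))} |S(\bal;P)| \ll q^{nP - (n-\sigma_\f)J'/2^{d-1}}.
$$

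I next feed this into the second part of Lemma \ref{lem2.2} with $\mathfrak{B} = \mathfrak{m}(\Delta(J'))$ and $\delta = (n-\sigma_\f)J'/(2^{d-1}P)$, preferring the second part over the first because the target exponent carries $1+dRb$ rather than $1+dR(1+b)$. Its hypothesis $\delta P \geq \mathcal{C}(1 - db/(d-1))$ is automatic for $b \geq 1$ since the right side is then non-positive, and for $b=0$ it reduces to $(n-\sigma_\f)/2^{d-1} \geq \mathcal{C}$, which follows easily from the upper bound on $\mathcal{C}$ in \eqref{eq:egg}. The lemma therefore produces
$$
I_J \ll q^{(n-\mathcal{C})P} + q^{1 + dRb + (n - dR - \delta(1 - dR/\mathcal{C}))P}.
$$

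The main obstacle I anticipate is passing from the abstract factor $1 - dR/\mathcal{C}$ to the explicit constant $1/2$ that figures in the target exponent, which is precisely where the assumption $n \geq (d(d-1)2^{d+\nu(d)+1}+1)R$ enters. From the lower bound in \eqref{eq:egg}, this hypothesis forces $\mathcal{C} \geq 2dR(1 - O(\ve))$, and hence $1 - dR/\mathcal{C} \geq 1/2 - O(\ve)$. Combined with $\sigma_\f \leq R-1$, this yields
$$
\delta(1 - dR/\mathcal{C}) P \geq \frac{n-R+1}{2^d}J' - O(\ve J') = \frac{n-R+1}{2^d}\left(1 + \left\lfloor \frac{J}{R(d-1)} \right\rfloor\right) - O(\ve),
$$
the absorption of $J' \ll e$ into $O(\ve)$ being permitted because the implicit constants may depend on $d,e,n,R,b$. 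Applying \eqref{eq:egg} once more to rewrite $(n-\mathcal{C})P$ and substituting $P = e+1-b$ yields the stated bound.
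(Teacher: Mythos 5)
Your proof is correct and takes essentially the same route as the paper: the same choice $J'=1+\lfloor J/(R(d-1))\rfloor$ with the check $J'\leq e+1-b$, the same application of the second part of Lemma \ref{lem2.2} with $\delta\asymp (n-R+1)J'/(2^{d-1}(e+1-b))$ (and the same $b=0$ verification of its hypothesis), and the same use of \eqref{eq:egg} both to turn $1-dR/\mathcal{C}$ into $\tfrac12+O(\ve)$ and to bound $n-\mathcal{C}$. The only cosmetic difference is that you obtain the pointwise Weyl bound by citing Corollary \ref{cor:minor est1} at level $\Delta(J')$ together with the nesting $\mathfrak{m}(J)\subseteq \mathfrak{m}(\Delta(J'))$, whereas the paper runs the dichotomy of Lemma \ref{lem2.5} directly on $\TT^R\setminus\mathfrak{M}(J)$ and excludes alternative (ii) by contradiction; these are the same argument packaged differently.
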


\begin{proof}
The plan is to  recycle our treatment of the minor arcs from Section
\ref{s:apply} with $P=e+1-b$.
We begin by applying Lemma \ref{lem2.5}. Suppose that
$\bal\in \TT^R\setminus \mathfrak{M}(J)$ and that  alternative (ii) holds for some $J'\leq e+1-b$. Then there exist $g\in\FF_q[t]$ and $\a\in \FF_q[t]^R$, with $g$ monic and  $\gcd(g,\a)=1$, such that
$$
0<  |g| \leq q^{R(d-1)(J'-1)} \quad \textnormal{and} \quad
|g m^d\bal- \a| <  q^{- d (e+1-b) + R(d-1) J'-(R-1)(d-1)}.
$$
We shall apply this with
\begin{equation}\label{eq:J'=}
J'=1+\left\lfloor \frac{J}{R(d-1)}\right\rfloor,
\end{equation}
which would
imply  that $\bal \in \mathfrak{M}(J)$, which is a contradiction.
We also need to check that $J'\leq e+1-b$, which certainly holds if
$$
\frac{J}{R(d-1)}\leq e-b.
$$
Since $J\leq
\lceil Rd(e-b)/(R+1)\rceil-1$, we see that it suffices to have
$$
\frac{Rd(e-b)}{R+1}
\leq (e-b)R(d-1),
$$
which is equivalent to demanding that
$d\leq (d-1)(R+1)$, an inequality that always holds for $d\geq 2$ and $R\geq 1$.
Hence we find ourselves  in alternative (i) of
Lemma \ref{lem2.5} and
it follows from Remark \ref{rem:sigma-f} that
$$
S(\bal)\ll q^{n(e+1-b)-\frac{(n-R+1)J'}{2^{d-1}}} =
q^{(n-\delta)(e+1-b)},
$$
for any $\bal\in \mathfrak{B}=\TT^R\setminus \mathfrak{M}(J)$,
with
$$
\delta=\frac{(n-R+1)J'}{(e+1-b)2^{d-1}}>0.
$$

We would now like to apply the second part of
Lemma \ref{lem2.2} with $P=e+1-b$, for which we note that the  assumed bound holds with
$
C_0\ll 1$. It remains to check that $\delta (e+1-b)\geq \mathcal{C}\left(1-\frac{db}{d-1}\right)$.
If $b\geq 1$ then this is trivial, since the
right hand side is negative. Assuming that $b=0$, on the other hand, then it follows from  \eqref{eq:egg} that
it suffices to have
$$
\frac{(n-R+1)J'}{2^{d-1}}\geq
\frac{n-R+1}{2^{d+\nu(d)} (d-1)},
$$
which is valid, since $J'\geq 1$.
We finally deduce from the
 second part of
Lemma \ref{lem2.2} that
\begin{equation}\label{eq:IJ-1}
I_J
\ll
q^{(n - \mathcal{C}) (e+1-b)}
+  q^{1+dRb+(n - dR - \delta(1 - \frac{ d R}{\mathcal{C}}) ) (e+1-b)}.
\end{equation}
Now
it follows from  \eqref{eq:egg} that
$$
q^{(n - \mathcal{C}) (e+1-b)} \leq
q^{\left(n-\frac{n+1-R}{2^{d+\nu(d)}(d-1)}+O(\ve)\right)(e+1-b)}.
$$
This is satisfactory for the lemma.
Next,
  \eqref{eq:egg} yields
$$
\delta \left(1-\frac{dR}{\mathcal{C}}\right)(e+1-b)\geq \frac{(n-R+1)J'\left(1-\frac{dR}{\mathcal{C}}\right)}{2^{d-1}},
$$
where
\begin{equation}\label{eq:juice}
1-\frac{dR}{\mathcal{C}}
=1-\frac{dR2^{d+\nu(d)}(d-1)}{n+1-R}+O(\ve)
\geq \frac{1}{2}+O(\ve),
\end{equation}
under our assumption on $n$.
The lemma follows on recalling the expression \eqref{eq:J'=} for $J'$
and putting these estimates together in \eqref{eq:IJ-1}.
\end{proof}

Our second estimate for $I_J$  is more appropriate for smaller values of $J$ and draws on the major arc analysis in Section \ref{s:apply}.

\begin{lemma}\label{lem:IJ-small}
Assume that $n\geq  dR$ and
 $$
0\leq
J\leq e-db-b-1.
$$
Then
\begin{align*}
I_J \ll_\ve~&
q^{(n-dR)(e+1-b)+
dR-R(d-1)b
-(J+1)\left((1-\frac{1}{d})(\frac{n}{d}-R)-1\right)
+O(\ve)}\\
&\quad +
q^{(n-dR)(e+1-b)-R(d-1)b+(1-\frac{2}{d})n+R
-(J+d)\left((1-\frac{1}{d})(\frac{n}{d}-R)-1\right)
+O(\ve)},
\end{align*}
where the implied constant in the $O(\ve)$ term depends only on $d,e, n,R$ and $b$.
\end{lemma}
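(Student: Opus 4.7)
The strategy is to recycle the major-arc analysis from the proof of Theorem~\ref{p2.1}, but applied annulus-by-annulus to the nested major arcs rather than all at once. Under the assumption $J\leq e-db-b-1$, one checks directly (using $\deg m = b$ and $P=e+1-b$) that every $\bal \in \mathfrak{M}(J+1)$, written in the Farey form $\bal=\a/(gm^d)+\bbe$ with $g$ monic of degree $\leq J+1$, $\gcd(g,\a)=1$, and $|\bbe|<q^{J+1-de+db}/|gm^d|$, satisfies the two hypotheses $|gm^d|\leq q^P$ and $|\bbe|<q^{-(d-1)(P-1)}|g|^{-1}|m|^{-d}$ of Lemma~\ref{lem2.4}; one also verifies that the arcs are non-overlapping via \eqref{eq:overlap'}. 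Lemma~\ref{lem2.4} then gives
$$S(\bal;P)=q^{nP}\,S_{gm^d}(\a)\,S_{\infty}(m^d t^{dP}\bbe).$$

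Substituting this into the integral and changing variables $\bga=m^d t^{dP}\bbe$ yields
$$\int_{\mathfrak{M}(L)} S(\bal)\d\bal = q^{(n-dR)P-dRb}\sum_{\substack{g\text{ monic}\\ |g|\leq q^L}}\;\sum_{\substack{|\a|<|gm^d|\\ \gcd(\a,g)=1}} S_{gm^d}(\a)\,\mathfrak{I}\bigl(L+d-\deg g\bigr),$$
and Lemma~\ref{lem:snorlax} collapses the inner sum to $\1_{\gcd(g,m)=1}\,|m|^R A(g)$. Taking the difference of the expressions for $L=J+1$ and $L=J$, and grouping by $\deg g$, the terms with $|g|=q^{s}$, $s\leq J$, contribute a factor $\mathfrak{I}(J+1+d-s)-\mathfrak{I}(J+d-s)=\mathcal{I}(J+d-s)$, while the brand-new denominators $|g|=q^{J+1}$ contribute the full $\mathfrak{I}(d)$. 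Thus $I_J$ splits into two clean pieces.

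It remains to estimate the two pieces. For the first, use the crude bound $|\mathfrak{I}(d)|\ll q^{dR}$ coming from summing the trivial estimate of Lemma~\ref{lem:est2}, together with Lemma~\ref{lem:toad} in the form $\mathcal{S}(J+1)\ll_\ve q^{(J+1)(1+\ve-(1-\frac{1}{d})(\frac{n}{d}-R))}$; multiplied by the prefactor $q^{(n-dR)(e+1-b)-dRb}$ and $|m|^R=q^{bR}$, this produces precisely the first term in the claimed bound. For the second piece, apply the exponential-decay bound $|\mathcal{I}(T)|\ll q^{(1-\frac{2}{d})n+R-(1-\frac{1}{d})(\frac{n}{d}-R)T}$ from Lemma~\ref{lem:est2} together with the same estimate for $\mathcal{S}(s)$; the $s$-dependent factors $(1-\frac{1}{d})(\frac{n}{d}-R)$ cancel, leaving a geometric sum $\sum_{s=0}^{J}q^{s(1+\ve)}\ll q^{J(1+\ve)}$, and after combining exponents one lands on the second term in the claimed bound.

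The main obstacle is simply one of careful bookkeeping: verifying that the range $J \leq e-db-b-1$ suffices both for applying Lemma~\ref{lem2.4} and for the major arcs to be non-overlapping, and then tracking the exponents through the two different estimates of $\mathcal{I}(T)$ from Lemma~\ref{lem:est2} so as to recover the two terms in the stated bound in their exact form. All the substantive analytic input—the Gauss-sum decay of Proposition~\ref{pro:sga}, the singular-integral decay of Lemma~\ref{lem:est2}, and the divisor bound—has already been recorded in Section on singular series and singular integral, so no new ingredient is required.
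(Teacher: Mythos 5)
Your proposal is correct and follows essentially the same route as the paper: the paper parametrizes the annulus $\mathfrak{M}(J+1)\setminus\mathfrak{M}(J)$ directly (reducing to the dichotomy $|g|=q^{J+1}$ or $|\bbe|$ of maximal size) instead of differencing the two major-arc evaluations, but both decompositions yield the same two pieces, bounded by the same inputs (Lemma \ref{lem2.4}, Lemma \ref{lem:snorlax}, Lemma \ref{lem:toad}, Lemma \ref{lem:est2}). One small bookkeeping point: when Lemma \ref{lem2.4} is applied with $gm^d$ in place of $g$, its second hypothesis becomes $|\bbe|<q^{-(d-1)(P-1)}|g|^{-1}|m|^{-2d}$ (not $|m|^{-d}$), and it is this stronger requirement that makes the full assumption $J\leq e-db-b-1$ necessary, exactly as in the paper.
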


\begin{proof}
Recall the definition
 \eqref{eq:MM-J} of the major arcs. By  \eqref{eq:overlap}, the major arcs of level $J+1$ are non-overlapping if $J+1\leq \frac{1}{2}d(e-b)$. Our assumption
$ J\leq e-db-b-1$ is sufficient  to ensure this property.
Hence
 \begin{equation}\label{eq:anna}
I_J=
\sum_{\substack{g\in \FF_q[t] \text{ monic}\\
0<|g|\leq q^{J+1}
}}
\sum_{\substack{\a\in \FF_q[t]^R\\
|\a|<|gm^d|\\ \gcd(\a,g)=1}}
\int_{R_J\setminus \mathfrak{M}(J)} S(\bal)\d\bal
\end{equation}
in \eqref{eq:def_IJ},
where
$$
R_J=\left\{\bal\in \TT^R: \left|\bal-\frac{\a}{gm^d}\right|<\frac{q^{J+1-de}}{|g|}\right\}.
$$
Since $\bal\not \in \mathfrak{M}(J)$, we may assume in \eqref{eq:anna} that
\begin{equation}\label{eq:on_major}
|g|=q^{J+1} \quad \text{ or } \quad
 \left|\bbe\right|=\frac{q^{J-de}}{|g|},
\end{equation}
where $\bbe= \bal-\frac{\a}{gm^d}$.
We proceed by applying Lemma \ref{lem2.4}, which yields
$$
S(\bal)=q^{n(e+1-b)} S_{gm^d}(\a)S_\infty (m^dt^{d(e+1-b)} \bbe),
$$
provided that the hypotheses of the lemma are met. The first hypothesis is the upper bound $|gm^d|\leq q^{e+1-b}$,  which is
clearly met if $J\leq  e-db-b$. The second hypothesis is the bound
$
|\bbe|<q^{-(d-1)(e-b)}|g|^{-1}|m|^{-2d},
$
for which it suffices to have
$$
q^{J+1-de}
\leq q^{-(d-1)(e-b)}|m|^{-2d}
= q^{-de-db+e-b}.
$$
Thus we see that both  hypotheses hold if $J\leq e-db-b-1$, which is what we assumed in the statement of the lemma.

We have therefore shown that
$$
I_J=
q^{n(e+1-b)}
\sum_{\substack{g\in \FF_q[t] \text{ monic}\\
0<|g|\leq q^{J+1}
}}
\sum_{\substack{\a\in \FF_q[t]^R\\
|\a|<|gm^d|\\ \gcd(\a,g)=1}}
S_{gm^d}(\a)
\int_{|\bbe|<q^{J+1-de}/|g|}
S_\infty (m^dt^{d(e+1-b)} \bbe)
\d\bbe,
$$
where $g$ and $\bbe$ are constrained to satisfy \eqref{eq:on_major}.
Making a  change of variables, we see that
$$
\int_{|\bbe|<q^{J+1-de}/|g|}
S_\infty (m^dt^{d(e+1-b)} \bbe)
\d\bbe
=
q^{-dRb-dR(e+1-b)}
\int_{|\bga|<q^{J+1+d}/|g|}
S_\infty ( \bga)
\d\bga.
$$
Hence
$$
I_J=
q^{(n-dR)(e+1-b)-dRb}
\sum_{\substack{g\in \FF_q[t] \text{ monic}\\
0<|g|\leq q^{J+1}
}}
\sum_{\substack{\a\in \FF_q[t]^R\\
|\a|<|gm^d|\\ \gcd(\a,g)=1}}
S_{gm^d}(\a)
\int_{|\bga|<q^{J+1+d}/|g|}
S_\infty (\bga)
\d\bga,
$$
where $g$ and $\bga$ are constrained to satisfy
$$
|g|=q^{J+1} \quad \text{ or } \quad
 \left|\bga\right|=\frac{q^{J+d}}{|g|}.
$$

Recalling the notation  \eqref{eq:def A} for $A(g)$,
it follows from the proof of \eqref{eq:jerk} that
$$
\sum_{ \substack{
\a\in \FF_q[t]^R\\
 |\a| <|gm^d|  \\ \gcd(\a, g) = 1 }}
S_{gm^d}(\a)
= \mathbf{1}_{\gcd(g,m)=1} |m|^R
A(g).
$$
Hence, on recalling the notation
  \eqref{eq:ST} and \eqref{eq:IT}, we may now write
$$
|I_J|\leq
q^{(n-dR)(e+1-b)-R(d-1)b}
\sum_{(i,j)\in \mathcal{E}_1\cup \mathcal{E}_2}
\mathcal{S}(i) \cdot |\mathcal{I}(j)|,
$$
where
\begin{align*}
\mathcal{E}_1&=\left\{(J+1,j)\in \ZZ^2: j\leq d-1\right\},\\
\mathcal{E}_2&=\left\{(i,J+d-i)\in \ZZ^2: 0\leq i\leq J+1\right\}.
\end{align*}
Lemma \ref{lem:toad}  implies
that
$$
\mathcal{S}(i)
\ll_\ve
q^{i\left(1-(1-\frac{1}{d})(\frac{n}{d}-R)+O(\ve)\right)},
 $$
 where the implied constant in $O(\ve)$ term depends only on $d,e,n,R$ and $b$.
Likewise,
Lemma \ref{lem:est2} yields
$$
 \mathcal{I}(j)\ll
 \begin{cases}
 q^{R+jR} &\text{ if $j\leq d-1$,}\\
  q^{(1-\frac{2}{d})n+R-j(1-\frac{1}{d})(\frac{n}{d}-R)} &\text{ if $j\geq d-1$.}
  \end{cases}
 $$

Let $I_J^{(k)}$ denote the overall contribution to $I_J$ from indices $(i,j)\in \mathcal{E}_k$, for $k\in \{1,2\}$.
On  noting that $j\leq d-1$ in $\mathcal{E}_1$, we find that
\begin{align*}
I_J^{(1)}
&\ll_\ve
q^{(n-dR)(e+1-b)+R-R(d-1)b}
\sum_{\substack{(i,j)\in \mathcal{E}_1}}
q^{-i\left((1-\frac{1}{d})(\frac{n}{d}-R)-1+O(\ve)\right)}\cdot q^{jR}\\
&\ll_\ve q^{(n-dR)(e+1-b)+
dR-R(d-1)b
-(J+1)\left((1-\frac{1}{d})(\frac{n}{d}-R)-1\right)
+O(\ve)}.
\end{align*}
Similarly,
\begin{align*}
I_J^{(2)}
&\ll_\ve
q^{(n-dR)(e+1-b)
-R(d-1)b+
(1-\frac{2}{d})n+R} \\
&\quad\quad \times
\sum_{0\leq i\leq J+1}
q^{-i\left((1-\frac{1}{d})(\frac{n}{d}-R)-1+O(\ve)\right)}
\cdot q^{
-(J+d-i)(1-\frac{1}{d})(\frac{n}{d}-R)}\\
&\ll_\ve
q^{(n-dR)(e+1-b)
-R(d-1)b
+(1-\frac{2}{d})n+R-(J+d)(1-\frac{1}{d})(\frac{n}{d}-R)
+O(\ve)}
\sum_{0\leq i\leq J+1}
q^{i}\\
&\ll_\ve
q^{(n-dR)(e+1-b)-R(d-1)b+(1-\frac{2}{d})n+R
-(J+d)\left((1-\frac{1}{d})(\frac{n}{d}-R)-1\right)
+O(\ve)}.
\end{align*}
Combining these estimates, we
 therefore complete the proof of the lemma.
\end{proof}

It is now time to use this pair of results to prove  that   \eqref{eq:unicorn} holds under the assumptions of Theorem \ref{t:goat}.  In particular, we have  $d\geq 2$ and $n\geq  (d(d-1)2^{d+\nu(d)+1}+1)R$, where $\nu(d)$ is given by \eqref{eq:nu-d}. Moreover,
we begin by assuming that
$e\geq (d+1)b+1$.
Write
$$
J_0=e-db-b-1 \quad \text{ and } \quad
J_1=\lceil Rd(e-b)/(R+1)\rceil-1,
$$
and note that
$0\leq J_0\leq J_1 \ll e$.

It follows from  Lemma \ref{lem:IJ-large} that
\begin{align*}
\sum_{J=J_0+1}^{J_1} I_J
\ll_\ve~&
e q^{\left(n-\frac{n+1-R}{2^{d+\nu(d)}(d-1)}\right)(e+1-b)+O(\ve)}\\
&\quad +
  eq^{1+dRb
+(n - dR)(e+1-b)+O(\ve)}
q^{
-\frac{n-R+1}{2^{d}}\left(1+\left\lfloor \frac{J_0+1}{R(d-1)}\right\rfloor\right)}.
\end{align*}
Likewise, Lemma \ref{lem:IJ-small} implies that
\begin{align*}
\sum_{J=0}^{J_0} I_J
\ll_\ve~&
q^{(n-dR)(e+1-b)+
dR-R(d-1)b+1
-(1-\frac{1}{d})(\frac{n}{d}-R)
+O(\ve)}\\
& +
q^{(n-dR)(e+1-b)-R(d-1)b+(1-\frac{2}{d})n+R+d
-(d-1)(\frac{n}{d}-R)
+O(\ve)}.
\end{align*}

Putting these together,
we may conclude that  \eqref{eq:unicorn} holds, as desired,
provided that our parameters $d,n,m,R$ and $e$ are chosen in such a way that
the exponents of $q$ are all  strictly less than
$$
e(n-dR)+(n-R)(1-b)
=(n-dR)(e+1-b) +(d-1)R(1-b).
$$
But, on choosing $\ve>0$ sufficiently small, this is equivalent to demanding the veracity of the four  inequalities
\begin{equation}\label{eq:FIN1}
\left(n-\frac{n+1-R}{2^{d+\nu(d)}(d-1)}\right)(e+1-b)<(n-dR)(e+1-b)
+(d-1)R(1-b),
\end{equation}
\begin{equation}\label{eq:FIN2}
1+dRb
-\frac{n-R+1}{2^{d}}\left(1+\left\lfloor \frac{J_0+1}{R(d-1)}\right\rfloor\right)
< (d-1)R(1-b),
\end{equation}
\begin{equation}\label{eq:FIN4}
1-\left(1-\frac{1}{d}\right)\left(\frac{n}{d}-R\right)
<
 -R,
\end{equation}
and
\begin{equation}\label{eq:FIN5}
\left(1-\frac{2}{d}\right)n+R+d-(d-1)\left(\frac{n}{d}-R\right)<
 (d-1)R.
\end{equation}
Our  assumptions on $d$ and $n$ plainly ensure that both
 \eqref{eq:FIN4} and
  \eqref{eq:FIN5} hold.
Moreover,
 \eqref{eq:FIN1} holds if and only if
$$
(d-1)R(b-1)<(e+1-b)\left(\frac{n+1-R}{2^{d+\nu(d)}(d-1)}-dR\right).
$$
Since
$e\geq (d+1)b+1\geq 2b+1$ and
$n\geq  (d(d-1)2^{d+\nu(d)+1}+1)R$, the right hand side is
$>(d-1)R(b-1)$, as required.
Thus \eqref{eq:FIN1} holds.

It remains to check \eqref{eq:FIN2}, for which we
note that
$$
1+\left\lfloor \frac{J_0+1}{R(d-1)}\right\rfloor\geq \frac{J_0+2}{R(d-1)}.
$$
Recalling that $J_0= e-db-b-1$
and $n\geq  (d(d-1)2^{d+1}+1)R$,
we see that \eqref{eq:FIN2} holds if
$$
2d(e-db-b+1)\geq 1-R(d-1)+b(2d-1)R.
$$
Since $d\geq 2$, it  suffices to have
$$
e\geq \left(d+1+R\right)b.
$$
The proof of Theorem \ref{t:goat} is now complete.

\section{Low degree  rational curves}

The main goal of this section is to prove Theorem \ref{thm:low}.
Let $X\subset \PP^{n-1}$ be a smooth complete intersection over $\FF_q$,
cut out by $R$ hypersurfaces of  degree $d$.
 We shall always assume that
$\mathrm{char}(\FF_q)>d$ and that $n$ satisfies the bound in   \eqref{eq:n-bound}.
As usual, we denote by
$f_1, \ldots, f_R \in \FF_q[x_1, \ldots, x_n]$ the relevant  forms of degree $d\geq 2$.

Recall
$N(q,e)$ from
Definition  \ref{def:1}, which counts the number of tuples  $\mathbf{g}\in \FF_q[t]^n$, of degree at most $e$, at least one of degree exactly $e$, with no common zero, such that $f_1(\g)=\dots=f_R(\g)=0$. It follows from Proposition \ref{unfree-bound2}  that
\begin{equation}\label{eq:straw}
\#\mathcal M_{0,0}(X,e)(\FF_q)= \frac{N(q,e)}{(q-1)
(q^{3}-q)} .
\end{equation}
In order to prove
Theorem \ref{thm:low} it will suffice to show that the right hand side is positive if $q$
is sufficiently large in terms of $d$ and $n$, and $e\geq 2(d-1)R+3d$.

\subsection{Counting in the affine cone}

Let $P\in \NN$. For now we simply assume that $q\geq (d-1)^n$ and we fix the same choice of
$\mathcal{C}$ in  \eqref{setcalC}.  In particular, we have $\mathcal{C}>dR$ under the assumption    \eqref{eq:n-bound},
and we saw at the close of Section \ref{sec4} that
Hypothesis \ref{hyp:2.1} holds with $C=O(1)$.

We begin by focusing on the counting function
$N(\mathbf{f};P)
=
N(\mathbf{f};P, 1,\0)$ defined in \eqref{eq:defn-NP}, with $m=1$ and $\b=\0$.
Assume that
\begin{equation}\label{eq:P-assume}
P>(d-1)R.
\end{equation}
Then
it follows from Theorem \ref{p2.1} that
\begin{equation}\label{eq:p21}
N(\mathbf{f};P)= \mathfrak{S} \mathfrak{I} q^{(n - dR)P}+
O_\ve\left(E(P)\right),
\end{equation}
for any $\ve>0$,
with
\begin{align*}
E(P)=~& q^{(n-\mathcal{C})P}
+
q^{1+dR +\delta_0(1-\frac{dR}{\mathcal{C}})+ (n - dR - \delta_1) P}
+q^{(n - d R-\delta_2+\ve)P +n +\delta_2(1+R(d-1))},
\end{align*}
and where
\begin{equation}\label{eq:glass}
\delta_0 =
\frac{n - \sigma_\f}{ (d-1)2^{d-1}R  }, \quad
\delta_1= \delta_0 \left( 1 - \frac{dR}{\mathcal{C}}  \right), \quad
\delta_2=\left(1-\frac{1}{d}\right)\left(\frac{n}{d}-R\right)-1.
\end{equation}
Here, the implicit constant depends only on
 $d, n,R$ and $\ve$.

The singular series $\mathfrak{S}$ is defined in \eqref{def ss} and the singular integral $\mathfrak{I}$ in \eqref{def si}.
We  shall need to develop precise estimates for these quantities.

\begin{lemma}
\label{ssssssssss}
Assume $n> 2(d-1)R$ and  that $q$ is sufficiently large with respect to $d$ and $n$. Then
$
\mathfrak{S} = 1 +  O( q^{ 
  - \frac{n-R-3 }{2} 
  } ).
$
\end{lemma}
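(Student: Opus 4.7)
The plan is to exploit the multiplicativity of $A(g)$ to express $\mathfrak{S}$ as an Euler product over the monic primes of $\FF_q[t]$, and then control each local factor using Deligne's resolution of the Weil conjectures. Since $m=1$, \eqref{eq:jerky} reduces to $\mathfrak{S} = \sum_{g \text{ monic}} A(g)$; this converges absolutely (by Proposition \ref{pro:sga} and the hypothesis $n > 2(d-1)R$ together with $q$ large), and combined with the multiplicativity of $A(g)$ this yields the factorisation $\mathfrak{S} = \prod_\pi T_\pi$, where $T_\pi = \sum_{e \geq 0} A(\pi^e)$ and we set $A(\pi^0) = A(1) = 1$.

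The first technical step is to identify each local factor in closed form. The identity $A(\pi^e) = |\pi|^{-e(n-R)}(N(\pi^e) - |\pi|^{n-R}N(\pi^{e-1}))$ from \eqref{eq:A-step1} telescopes to $\sum_{e=0}^{E} A(\pi^e) = |\pi|^{-E(n-R)}N(\pi^E)$. Passing to the limit $E \to \infty$, I would decompose $N(\pi^E)$ according to the $\pi$-adic valuation of its argument as in \eqref{eq:spoon} and apply the Hensel identity \eqref{eq:Hensel} on the smooth locus, thereby arriving at
\[
T_\pi = \frac{N^*(\pi)/|\pi|^{n-R}}{1 - |\pi|^{dR-n}},
\]
where $N^*(\pi) = N(\pi) - 1$ and the formula is valid for $n > dR$.

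Next, I would insert Deligne's estimate \eqref{eq:case1}, giving $N^*(\pi)/|\pi|^{n-R} = 1 + O(|\pi|^{-(n-R-1)/2})$, and expand the geometric factor $(1 - |\pi|^{dR-n})^{-1} = 1 + O(|\pi|^{dR-n})$ to obtain $T_\pi = 1 + \epsilon_\pi$ with $|\epsilon_\pi| \ll |\pi|^{-(n-R-1)/2} + |\pi|^{dR-n}$, the implied constants depending only on $d$ and $n$. The elementary estimate $|\prod_\pi(1+\epsilon_\pi) - 1| \ll \sum_\pi |\epsilon_\pi|$ (valid once the right hand side is bounded) then reduces the problem to controlling $\sum_\pi |\epsilon_\pi|$. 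Splitting this sum by $\deg \pi = T$ and using that there are at most $q^T/T$ monic primes of degree $T$, each of the two geometric series in $T$ converges for $q$ sufficiently large with respect to $d$ and $n$, and is dominated by its $T=1$ contribution. This yields $\sum_\pi |\epsilon_\pi| \ll q^{-(n-R-3)/2} + q^{1+dR-n}$, and the hypothesis $n > 2(d-1)R$ ensures that the first term dominates.

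The main obstacle is showing that both error sources combine cleanly into the single bound $O(q^{-(n-R-3)/2})$ under the comparatively mild hypothesis $n > 2(d-1)R$: the assumption that $q$ is sufficiently large is essential here, both to guarantee that each geometric series in $T$ is dominated by its leading term, and to absorb the geometric series factor $q^{1+dR-n}$ into the Deligne main error.
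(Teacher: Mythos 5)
Your argument is correct and follows essentially the same route as the paper: factor $\mathfrak{S}$ into an Euler product, identify each local factor as $\lim_{E\to\infty}|\pi|^{-E(n-R)}N(\pi^E)=\frac{N^*(\pi)}{|\pi|^{n-R}}\bigl(1-|\pi|^{dR-n}\bigr)^{-1}$ via the valuation decomposition and Hensel lifting, insert Deligne's estimate for $N^*(\pi)$, and sum the resulting errors over primes of each degree. The only differences are presentational (you sum the geometric factor in closed form and use the elementary product inequality where the paper takes logarithms and expands $\exp$ for $q$ large), so no further comment is needed.
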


\begin{proof}
For any  prime $\pi \in \FF_q[t]$,  define
$$
\sigma(\pi) = \sum_{j = 0}^{\infty} \sum_{ \substack{ |\a| < | \pi^j |  \\ \gcd(\a, \pi) = 1 } } S_{ \pi^j }(\a),
$$
in the notation of \eqref{eq:D1}. In the light of the notation \eqref{eq:def A}, and the assumed
lower bound $n> 2(d-1)R$,
it follows from
Lemma \ref{lem:111} that this sum converges. Moreover,
we clearly have
$$
\mathfrak{S} = \prod_{\pi}  \sigma(\pi)
$$
in \eqref{def ss}.
Using Ramanujan sums and recalling that $m=1$, it  is routine to check that
$$
\sigma(\pi) = \lim_{k \to \infty }  |\pi|^{ - k (n - R)} N (\pi^k),
$$
in the notation of \eqref{eq:fan},
where
$$
N(\pi^k)=\#\left\{\z\in \FF_q[t]^n: |\z|<|\pi^k|,~ f_i(\z)\equiv 0\bmod{\pi^k} \text{ for $1\leq i\leq R$}\right\}.
$$
It will also  be convenient to recall that
$$
N^*(\pi^k)=\#\left\{\z\in \FF_q[t]^n: |\z|<|\pi^k|,~ \pi\nmid \z, ~f_i(\z)\equiv 0\bmod{\pi^k} \text{ for $1\leq i\leq R$}\right\}.
$$

We proceed by adapting the proof of Lemma \ref{lem:111} to prove that
\begin{equation}
\label{eq:somelem}
N(\pi^e)
=
\frac{N^*(\pi)}{ |\pi|^{n-R} } C_{\infty}(\pi)
|\pi|^{(n-R)e}
+O\left(|\pi|^{(1-\frac{1}{d})en}\right),
\end{equation}
where
$$
C_{\infty}(\pi ) =   \sum_{ j \geq 0 } |\pi|^{- j ( n - dR) } .
$$
First, we deduce from  \eqref{eq:Hensel}  that
$
N^*(\pi^k) =   |\pi|^{ (n - R) (k-1)  }   N^*(\pi),
$
for any $k \geq 1$.  It now follows from \eqref{eq:spoon}
that 
\begin{align*}
N(\pi^e)
&=
N^*(\pi)  |\pi|^{(n-R)(e-1)}
\sum_{\substack{0\leq j< e/d
}}
|\pi|^{(d-1)jn - (n-R) dj }
+O\left(|\pi|^{(1-\frac{1}{d})en}\right)
\\
&=
N^*(\pi)  |\pi|^{(n-R)(e-1)}
\sum_{\substack{0\leq j< e/d
}}
|\pi|^{- j ( n - dR) }
+O\left(|\pi|^{(1-\frac{1}{d})en}\right),
\end{align*}
for $e>d$.
It is clear  that
$$
\left| C_{\infty}(\pi) -  \sum_{\substack{0\leq j< e/d
}}
|\pi|^{- j ( n - dR) }  \right|
\ll
|\pi|^{- \frac{e}{d} ( n - dR) }.
$$
Putting these together therefore establishes  the claimed bound \eqref{eq:somelem}.

Note that
\begin{equation}\label{eq:pond}
C_{\infty}(\pi ) =1 + O( |\pi|^{- ( n - dR) }).
\end{equation}
Moreover, according to  \eqref{eq:case1}, we have 
$N^*(\pi) =|\pi|^{n-R }(1+O( |\pi|^{ -\frac{n-R-1 }{2}}))$.
It now follows
from
\eqref{eq:somelem} and \eqref{eq:pond} that
\begin{align*}
\sigma(\pi) &=   \lim_{k \to \infty }  |\pi|^{ - k (n - R) }
\left( \frac{N^*(\pi)}{ |\pi|^{n-R} } C_{\infty}(\pi)
|\pi|^{(n-R)k}
+O (|\pi|^{(1-\frac{1}{d})kn  }) \right)
\\
&= \frac{N^*(\pi)}{ |\pi|^{n-R} } C_{\infty}(\pi)\\
&
= 1 + J(\pi),
\end{align*}
where $J(\pi)=|\pi|^{ - \frac{n-R-1 }{2} }$.
Moreover,  the implicit constant depends only on $d$ and $n$.
Note that
\begin{align*}
\sum_{\pi} J(\pi)
\ll
\sum_{\pi}  q^{ - (\frac{n-R-1 }{2} )  \deg \pi }
&\ll
\sum_{j = 1}^{\infty}  q^{ j   - (\frac{n-R-1 }{2} )  j  }
\ll
q^{   - \frac{n-R-3 }{2}   } .
\end{align*}

Let us assume $q$ is sufficiently large with respect to $d$ and $n$, so that we can use Taylor  expansion to deduce that
$
\log ( 1 + J(\pi)  ) = O( |J(\pi)| ),
$
for an absolute implied  constant.
Then it follows that
\begin{align*}
\mathfrak{S} &=
\prod_\pi \left(1 + J(\pi) \right)
\\
&= \exp \left(   \sum_{  \pi }  \log ( 1 + J(\pi) ) \right)
\\
&=
\exp \left(  O  \left(  \sum_{  \pi } |J(\pi)|   \right)   \right)
\\
&=
\exp \left(  O  (  
q^{   - \frac{n-R-3 }{2}   } )   \right)
\\
&=
1 +    O  (  q^{   - \frac{n-R-3 }{2}  }   ),
\end{align*}
where we again used the fact that $q$ is sufficiently large to make use of the Taylor series expansion of $\exp$.
\end{proof}

\begin{lemma}
\label{intttt}
Assume $n\geq 2(d-1)R$. Then
$
\mathfrak{I} = q^{R( d - 1 )} (1+ O(  q^{ -    \frac{n-R-1 }{2} }  )).
$
\end{lemma}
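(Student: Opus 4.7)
The plan is to compute $\mathfrak{I}$ by identifying it, via Fourier duality, with a limit of local ``densities at infinity'', and then to import the asymptotic for $N(t^e)$ that is established in the course of proving Lemma \ref{ssssssssss}.

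For any integer $T \geq d$, Fubini combined with the orthogonality relation \eqref{eq:ortho} gives
$$\int_{|\bga| < q^T} S_\infty(\bga)\, \d\bga = q^{TR}\, \mu(T),$$
where $\mu(T) = \meas\{\v \in \TT^n : |f_i(\v)| < q^{-T},~ 1 \leq i \leq R\}$. The identity $\mu(T) = q^{(d-1-T)n} N(t^{T+1-d})$ from \eqref{eq:claim-ms} then rewrites this as
$$\int_{|\bga| < q^T} S_\infty(\bga)\, \d\bga = q^{TR + (d-1-T)n}\, N(t^{T+1-d}).$$
By the absolute convergence of $\mathfrak{I}$ supplied by Lemma \ref{si}, it suffices to evaluate the limit of this quantity as $T \to \infty$.

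Next, I would invoke the asymptotic
$$N(t^e) = \frac{N^*(t)\, C_\infty(t)}{q^{n-R}}\, q^{(n-R)e} + O\bigl(q^{(1-\frac{1}{d})en}\bigr),$$
with $C_\infty(t) = \sum_{j \geq 0} q^{-j(n-dR)}$, which was proved in the course of establishing Lemma \ref{ssssssssss}. Specialising to $e = T+1-d$ and substituting into the previous display, a direct exponent calculation shows that the main-term exponent collapses to $R(d-1)$ independently of $T$, while the error reduces to $O(q^{T(R - n/d) + n(1-1/d)})$, which vanishes as $T \to \infty$ since the assumption $n \geq 2(d-1)R$ comfortably implies $n > dR$. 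Consequently
$$\mathfrak{I} = \frac{N^*(t)\, C_\infty(t)}{q^{n-R}}\, q^{R(d-1)}.$$

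To extract the explicit error bound, I would apply \eqref{eq:case1} to obtain $N^*(t) = N(t) - 1 = q^{n-R}\bigl(1 + O(q^{-(n-R-1)/2})\bigr)$ for $q$ sufficiently large, combined with the geometric series expansion $C_\infty(t) = (1 - q^{-(n-dR)})^{-1} = 1 + O(q^{-(n-dR)})$. Under the hypothesis $n \geq 2(d-1)R$ and for $q$ sufficiently large, both correction factors contribute errors that are absorbed into $O(q^{-(n-R-1)/2})$, yielding the stated bound. The main place to exercise care is the exponent cancellation in the second paragraph, together with ensuring that the implicit constants in the asymptotic for $N(t^e)$ are independent of $e$ so that passage to the limit is legitimate; I expect both to be routine given the work already performed for Lemma \ref{ssssssssss}.
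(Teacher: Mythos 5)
Your proposal is correct and follows essentially the same route as the paper: identify the truncated integral $\mathfrak{I}(T)$ with $q^{RT}\meas\{\v\in\TT^n:|f_i(\v)|<q^{-T}\}$ via \eqref{eq:ortho}, convert to $N(t^{T+1-d})$ by \eqref{eq:claim-ms}, insert the asymptotic \eqref{eq:somelem}, and finish with \eqref{eq:case1} and \eqref{eq:pond}. The only cosmetic difference is that you pass to the limit $T\to\infty$ to get the exact identity $\mathfrak{I}=q^{R(d-1)}N^*(t)C_\infty(t)/q^{n-R}$, whereas the paper keeps $T$ finite and sufficiently large; the estimates are otherwise identical.
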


\begin{proof}
Let $T  >  2d - 1$. Firstly, it follows from  Lemma \ref{si} that
$$
| \mathfrak{I} - \mathfrak{I}(T)| \ll q^{n-(1-\frac{1}{d})(\frac{n}{d}-R)T}.
$$
It also follows from combining the proof of Lemma \ref{lem:est2} with
\eqref{eq:somelem} that
\begin{align*}
\mathfrak{I}(T) &= q^{RT} m (T)
\\
&= q^{(d - 1 - T)n +  RT} N( t^{T + 1 - d} )
\\
&= q^{(d - 1 - T)n +  RT} \left( \frac{N^*(t)}{ q^{n-R} } C_{\infty}(t)
q^{(n-R)(T + 1 - d)}
+O\left(q^{(1-\frac{1}{d})(T + 1 - d)n}\right) \right)
\\
&=   \frac{N^*(t)}{ q^{n-R} } C_{\infty}(t) q^{R( d - 1 )}
+O\left(
q^{-\frac{T}{d} (n -d R )
-\frac{1}{d}(1-d)n}
\right).
\end{align*}
But  (\ref{eq:case1}) and \eqref{eq:pond} yield
$$
\frac{N^*(t)}{ q^{n-R} } C_{\infty}(t)
= 1 + O(  q^{  - \frac{n-R-1 }{2} }   ),
$$
under the assumption on $n$ in the lemma.
Therefore, on choosing $T$ to be sufficiently large, we easily arrive at the desired estimate.
\end{proof}

\subsection{M\"obius inversion}

Now that we have an estimate for $N(\mathbf{f};P)$ in
\eqref{eq:p21}, we may go on to deduce an  estimate for the
quantity
$$
N^*(\mathbf{f}; P) = \#\{ \mathbf{g} \in \FF_q[t]^n_{\textnormal{prim}}: 1 \leq  |\mathbf{g}| < q^P, f_1(\mathbf{g}) = \cdots = f_R(\mathbf{g}) = 0 \},
$$
in which $\mathbf{g} \in \FF_q[t]^n_{\textnormal{prim}}$ means that
$\gcd(g_1,\dots,g_n)=1$.
As usual we shall assume that $q$ is sufficiently large in terms of $d$ and $n$.
We shall also need to  assume  that $P$ satisfies the inequality
\eqref{eq:P-assume}.

Let $\mu:\FF_q[t]\to \{0,\pm 1\}$ denote the function field version of the M\"{o}bius function.
Then  M\"{o}bius inversion yields
\begin{align*}
N^*(\mathbf{f}; P)
&=
\sum_{ \substack{ k \in \mathbb{F}_q[t]  \\   k \, \textnormal{monic} \\ |k| < q^P   }  }
\mu(k)
\#\{ \mathbf{g} \in \FF_q[t]^n: 1 \leq  |\mathbf{g}| < q^{P - \deg k}, f_1(\mathbf{g}) = \cdots = f_R(\mathbf{g}) = 0 \}
\\
&=
\sum_{ \substack{ k \in \mathbb{F}_q[t]  \\   k \, \textnormal{monic} \\ |k| < q^P }  }
\mu(k) ( N(\f; P - \deg k)  -  1).
\end{align*}
We will need to truncate the sum over $k$, so that $P-\deg k$ satisfies the constraint
\eqref{eq:P-assume}. Thus
\begin{align*}
N^*(\mathbf{f}; P)
&=
\sum_{ \substack{ k \in \mathbb{F}_q[t]  \\   k \, \textnormal{monic} \\ |k| < q^{P-(d-1)R} }  }
\mu(k)  N(\f; P - \deg k)  +O\left( q^P +L(P)\right),
\end{align*}
where
$$
L(P)=
\sum_{ \substack{ k \in \mathbb{F}_q[t]  \\  k \, \textnormal{monic} \\ |k| \geq  q^{P-(d-1)R} }  }
\hspace{-0.3cm}
N(\f; P - \deg k)
\leq
\sum_{ \substack{ k \in \mathbb{F}_q[t]  \\  k \, \textnormal{monic} \\ |k| \geq  q^{P-(d-1)R} }  }
\hspace{-0.3cm}
\left(\frac{q^{P}}{|k|}\right)^n
\ll q^{(d-1)R(n-1)+P}.
$$
On appealing to \eqref{eq:p21}, we therefore deduce that
$$
N^*(\mathbf{f}; P)
=
 \mathfrak{S} \mathfrak{I} q^{(n - dR) P  }  \sum_{ \substack{ k \in \mathbb{F}_q[t]  \\   k \, \textnormal{monic} \\ |k| < q^{P-(d-1)R} }  }
\mu(k)q^{- (n - dR)\deg k }
+ O \left( q^{(d-1)R(n-1)+P} +  E(P)    \right),
$$
since
$\sum_{k}E (P - \deg k)\ll E(P)$.

We see that
$$
\sum_{ \substack{ k \in \mathbb{F}_q[t]  \\   k \, \textnormal{monic} \\ |k| < q^{P-(d-1)R} }  }
\mu(k)q^{- (n - dR)\deg k }
= 1 +
O \left( \sum_{1 \leq j < P} q^{j - (n - dR) j }   \right)
=
1 +
O (  q^{ 1 - (n - dR)  } ).
$$
Our assumption \eqref{eq:n-bound} implies that  $n> 2(d-1)R$. Hence, on combining these estimates with Lemmas \ref{ssssssssss} and \ref{intttt}, we obtain
\begin{align*}
N^*(\mathbf{f}; P)
&=
\mathfrak{S} \mathfrak{I} q^{(n - dR) P  } (1 +
O (  q^{ 1 - (n - dR)  } )) + O ( q^{(d-1)R(n-1)+P}+E(P) )
\\
&=
q^{(n - dR) P  +( d - 1 )R}
(
1
 +
O( q^{ - \frac{n-R-3 }{2}   }  ))
 + O ( q^{(d-1)R(n-1)+P}+E(P) )
\\
&=
q^{(n - dR) P+( d - 1 ) R } \left(1 +O ( \widetilde{E} (P) )\right),
\end{align*}
where
\begin{align*}
\widetilde{E}(P)=~&
q^{(d-1)R(n-2)
-(n-dR-1)P}
+
q^{     - \frac{n-R-3 }{2}  } + q^{(dR-\mathcal{C})P-(d-1)R}\\
&+
q^{1+R +\delta_0(1-\frac{dR}{\mathcal{C}})- \delta_1 P}
+q^{-\delta_2(P-1-(d-1)R)+\ve P+n-(d-1)R}.
\end{align*}

\subsection{Completion of the proof}

We are now ready to conclude the proof
of Theorem~\ref{thm:low}, based on the expression \eqref{eq:straw}.
Assuming that $q$ is sufficiently large with respect to $d$ and $n$,
it is clear from our work above that
\begin{align*}
N(q,e)
&=N^*(\mathbf{f}; e+1)-N^*(\mathbf{f}; e)\\ &=
q^{(n - dR) (e+1)+( d - 1 )R  }\left(1-q^{-(n-dR)}\right)
+O ( q^{(n - dR) (e+1)+( d - 1 )R}  \widetilde{E} (e+1) ),
\end{align*}
under the assumption that $e>(d-1)R$.

Clearly $n-dR>0$, $\mathcal{C}>dR$  and
$\frac{n-R-3 }{2} >0$ under our assumption on $n$ in \eqref{eq:n-bound}.
Thus, in the light of our remarks  on the size of $q$,
we may conclude that
$\#\mathcal M_{0,0}(X,e)(\FF_q)>0$ if  the  three inequalities
\begin{equation}\label{eq:karikal}
\begin{split}
(n-dR-1)(e+1)&>(d-1)R
(n-2),\\
\delta_1 (e+1)&>
1+R +\delta_0\left(1-\frac{dR}{\mathcal{C}}\right),\\
\delta_2(e-(d-1)R)+(d-1)R&>\ve(e+1)+n,
\end{split}
\end{equation}
all hold, for any $\ve>0$.  Recalling \eqref{eq:glass}, the second inequality is equivalent to
$$
e>
\frac{1+R}{\delta_1}.
$$
Now it follows from our assumption \eqref{eq:n-bound} that
$n\geq d(d-1)2^{d+\nu(d)+1}R+R$, where $\nu(d)$ is given by \eqref{eq:nu-d}.
In particular $1-\frac{dR}{\mathcal{C}}\geq \frac{1}{2}+O(\ve)$, for any $\ve>0$,  by \eqref{eq:juice}.
But then, since
Remark \ref{rem:sigma-f} yields
 $\sigma_\f\leq R-1$,
 it follows from \eqref{eq:glass} that
$
\delta_1\geq d2^{\nu(d)+1} +O(\ve) \geq d,
$
on choosing $\ve$ sufficiently small. We conclude that the second inequality in \eqref{eq:karikal} is satisfied if  $e\geq R+1$.
This is implied by our assumption $e>(d-1)R$.
We now make the more stringent assumption
$e\geq 2(d-1)R$, under which the first inequality is  obvious.
Finally, we
make the even more stringent assumption
that
$e\geq  2(d-1)R+3d$ and we check the third inequality in 
 \eqref{eq:karikal}
with $\ve=\delta_2\ve'$, for any $\ve'>0$, which is then  equivalent to 
$$
\delta_2\left(e(1-\ve')-(d-1)R-\ve'\right)+(d-1)R>n.
$$
It follows from \eqref{eq:glass} that
$$
\delta_2=\left(1-\frac{1}{d}\right)\left(\frac{n}{d}-R\right)-1\geq 
\frac{n-dR-2d}{2d},
$$
since  $1-\frac{1}{d}\geq \frac{1}{2}$
for $d\geq 2$. Hence, 
since $e\geq  2(d-1)R+3d$, we obtain
\begin{align*}
\delta_2\left(e(1-\ve')-(d-1)R-\ve'\right)
&\geq 
\frac{n-dR-2d}{2d}\cdot 
\left((1-2\ve')(d-1)R+(1-\ve')3d-\ve'\right).
\end{align*}
Taking $R\geq 1$
and 
$\ve'$ sufficiently small, 
it therefore  follows that  
\begin{align*}
\delta_2\left(e(1-\ve')-(d-1)R-\ve'\right)+(d-1)R
&\geq \frac{(4d-1)(n-dR-2d)}{2d} -O(\ve').
\end{align*}
This is  greater than $n$ under our assumptions on $n$.
This therefore completes the proof of Theorem \ref{thm:low}.

\end{document}